\documentclass[11pt]{article}
\usepackage[a4paper,textwidth=405bp,textheight=646bp,centering]{geometry}
\usepackage{lmodern}     %

\usepackage[T1]{fontenc}    %
\usepackage{microtype}      %

\usepackage{xcolor}

\usepackage{mathtools,amssymb}   %
\mathtoolsset{showonlyrefs,showmanualtags}
\usepackage{bm}             %

\usepackage{my_notation}

\usepackage{amsthm}
\usepackage{thmtools}                   %

\usepackage{my_citations}   %
\usepackage[compact]{titlesec}

\titleformat{\section}[runin]
{\normalfont\normalsize\bfseries}
{\thesection.}
{0.6em}
{}
[.]

\titlespacing*{\section}
{0pt}
{1.2\baselineskip plus 0.25\baselineskip minus 0.15\baselineskip}
{0.8em}

\titleformat{\subsection}[runin]
{\normalfont\normalsize\itshape}
{\thesubsection.}
{0.6em}
{}
[.]

\titlespacing*{\subsection}
{0pt}
{0.9\baselineskip plus 0.2\baselineskip minus 0.1\baselineskip}
{0.8em}

\titleformat{\subsubsection}[runin]
{\normalfont\normalsize\itshape}
{}
{0pt}
{}
[.]

\titlespacing*{\subsubsection}
{0pt}
{0.75\baselineskip plus 0.15\baselineskip minus 0.1\baselineskip}
{0.8em}

\titleformat{\paragraph}[runin]
{\normalfont\normalsize\itshape}
{}
{0pt}
{}
[.]

\titlespacing*{\paragraph}
{0pt}
{0.55\baselineskip plus 0.1\baselineskip minus 0.05\baselineskip}
{0.8em}

\usepackage{csquotes}
\usepackage[british]{babel}     %

\usepackage[hidelinks]{hyperref}       %
\usepackage{bookmark}           %
\usepackage{zref-clever}
\let\cref\zcref
\zcsetup{cap,nameinlink=false}                   %

\usepackage{my_theorems}

\usepackage{enumitem}           %

\begin{document}

\begin{center}
{\Large\bfseries Self-normalised Bennett inequalities\\for Hilbert-valued martingales\par}

\vspace{1.2em}

{\normalsize David Janz\par}
\vspace{0.15em}
{\small University of Oxford\par}
{\small \texttt{david.janz@stats.ox.ac.uk}\par}
\end{center}

\begin{abstract}
We prove time-uniform self-normalised Bennett inequalities for a martingale $(M_n)_{n\geq0}$ in a separable Hilbert space, with $M_0=0$ and increments bounded in norm by one. Writing $V_n$ for its predictable covariance process and $h(u)=(1+u)\log(1+u)-u$ for the Bennett rate function, we show that, for every regularisation parameter $\rho>0$, the process
\[
  \exp\curlyb[\Bigg]{\rho h\roundb[\bigg]{\frac{\norm{(V_n+\rho I)^{-1/2}M_n}}{\sqrt\rho}}-\frac12\log\det\roundb[\big]{I+\rho^{-1}V_n}},\qquad n\geq0,
\]
is a nonnegative supermartingale with initial value one, where $\det$ is the Fredholm determinant. Ville's inequality yields time-uniform Bennett and Bernstein bounds for $\norm{(V_n+\rho I)^{-1/2}M_n}$. The result permits conditional covariance increments of infinite rank; in finite dimensions, the resulting boundaries sharpen existing martingale-transform and determinant-based variational bounds. The same construction extends to compensated marked point processes with bounded jumps.

Mixing these supermartingales over $\rho$ gives simultaneous control over the regularisation parameter. Consequences include an upper law of the iterated logarithm for the regularised ellipsoidal radius in separable Hilbert spaces and, in finite dimensions, spectrum-sensitive finite-time bounds and an upper law of the iterated logarithm for the unregularised radius $\norm{V_n^{-1/2}M_n}$, whose constant $1$ is sharp over the class. We also obtain a time-uniform Bernstein inequality for the martingale norm $\norm{M_n}$ with dependence on $\tr(V_n)$ and $\norm{V_n}_{\op}$.
\end{abstract}

\section{Introduction}
Let $(M_n,\cF_n)_{n\ge0}$ be a real-valued martingale with $M_0=0$, increments
$X_n=M_n-M_{n-1}$ satisfying $X_n\le1$ almost surely, and predictable variance
$V_n=\sum_{i=1}^n\E[X_i^2\mid\cF_{i-1}]$. Write $h$ for the Bennett rate function
\[
  h(u)=(1+u)\log(1+u)-u,\qquad u\ge0.
\]
Freedman's martingale extension of Bennett's inequality
\citep{bennett1962probability,freedman1975tail} states that for $v>0$ and $y \geq 0$,
\[
 \P{\exists n\ge1: M_n\ge0,\ v h(M_n/v)\ge y \text{ and } V_n\le v}
  \le e^{-y}.
\]
This bound fixes the variance level $v$ in advance. We instead study self-normalised bounds, in which the scale is the predictable variance process itself \citep{delapena1999general,delapena2004selfnormalized,delapena2009selfnormalized}. For $\rho>0$, we consider the regularised process
\[
  \frac{\abs{M_n}}{\sqrt{V_n+\rho}},\qquad n\geq0.
\]
Our inequality takes the following form in the scalar case: if $\abs{X_n}\leq1$ almost surely, then for every $\rho>0$ and $y\geq0$,
\[
  \P[\bigg]{\exists n\geq1:\rho h\roundb[\bigg]{\frac{1}{\sqrt{\rho}}\,\frac{\abs{M_n}}{\sqrt{V_n+\rho}}}>y+\frac12\log(1+V_n/\rho)}\leq e^{-y}.
\]
Setting $\rho=v$ calibrates the regularisation to the variance level $v$. On $\curlyb{V_n\leq v}$, the Bennett term in the preceding display lies between $\frac12v h(\abs{M_n}/v)$ and $v h(\abs{M_n}/v)$, while the logarithmic correction is at most $\frac12\log2$.

Now let $(M_n)$ be a martingale taking values in a separable real Hilbert space $\cH$, with norm $\norm{\cdot}$, and suppose that its increments satisfy $\norm{X_n}\leq1$ almost surely. Let $V_n$ denote its predictable covariance process, whose values are positive trace-class operators on $\cH$. For a positive trace-class operator $A$, write $\lambda_1(A)\geq\lambda_2(A)\geq\cdots\geq0$ for its eigenvalues, counted with multiplicity and arranged in nonincreasing order; for a positive-definite matrix $A$, write $\kappa(A)=\lambda_{\max}(A)/\lambda_{\min}(A)$. The scalar ratio $\abs{M_n}/\sqrt{V_n+\rho}$ is replaced by the ellipsoidal radius
\[
  \sup_{\norm{u}=1}\frac{\abs{\langle u,M_n\rangle}}{\sqrt{\langle u,V_nu\rangle+\rho}}=\norm{(V_n+\rho I)^{-1/2}M_n},
\]
and the scalar logarithmic correction $\frac12\log(1+V_n/\rho)$ becomes
\[
  \frac12\log\det(I+\rho^{-1}V_n).\]
These replacements give a Hilbert-space Bennett bound with the same form as the scalar inequality above. The bound follows by applying Ville's inequality to the nonnegative supermartingale constructed in our main theorem, \cref{thm:bennett-logdet-potential}; a standard relaxation of the Bennett rate gives a Bernstein bound.

The same ellipsoidal radius and log-determinant correction are classical under conditional sub-Gaussian assumptions, with the quadratic rate $u^2/2$ in place of $h(u)$ \citep{delapena2004selfnormalized,delapena2009selfnormalized,abbasiyadkori2013online}. Bennett and Bernstein analogues have mainly been developed in two settings. Martingale-transform inequalities assume increments of the form $X_n=\epsilon_nx_n$, where $x_n$ is predictable \citep{faury2020improved}; extensions to general separable Hilbert spaces retain this structure \citep{akhavan2025bernstein,metelli2025generalized,martineztaboada2026vector}. Consequently, each conditional covariance increment has rank at most one. A second line combines exponential-moment bounds for scalar projections through covering or variational arguments \citep{whitehouse2026timeuniform,ziemann2025vector,chugg2025variational}; in the Bennett and Bernstein settings, however, all currently available inequalities without martingale-transform structure are finite-dimensional.

By contrast, our main theorem applies to general bounded increments in a separable Hilbert space, including martingales whose conditional covariance increments have infinite rank; the countable-alphabet and random-curve examples in \cref{sec:applications} illustrate this scope. In finite dimensions, our Bernstein boundary is strictly smaller than that of \citet{faury2020improved}, while our Bennett and Bernstein boundaries lie strictly below the determinant-based PAC--Bayes line boundaries of \citet{chugg2025variational}.

The supermartingale construction also extends to compensated marked point processes (\cref{prop:continuous-bounded-jumps}). Mixing the discrete-time supermartingales indexed by $\rho$ gives a bound that holds simultaneously over all regularisation scales (\cref{thm:mixed-rho-bennett}), allowing the scale to depend on the predictable covariance process $V_n$. A principal consequence is a spectrum-sensitive finite-dimensional upper law of the iterated logarithm (LIL) (\cref{cor:finite-dimensional-lil}): almost surely on the event that $\lambda_{\min}(V_n)\to\infty$ and $\log\det\roundb[\big]{\lambda_{\min}(V_n)^{-1}V_n}=o\roundb[\big]{\lambda_{\min}(V_n)}$,
\[
  \limsup_{n\to\infty}\frac{\norm{V_n^{-1/2}M_n}}{\sqrt{2\log\log\lambda_{\min}(V_n)+\log\det\roundb[\big]{\lambda_{\min}(V_n)^{-1}V_n}}}\leq1.
\]
The finite-dimensional LIL of \citet{whitehouse2026timeuniform} uses $2\log\log\lambda_{\max}(V_n)+d\log\kappa(V_n)$ under the square root. The normalising term in the preceding display is no larger, and is strictly smaller unless $V_n$ is a scalar multiple of the identity. In one dimension, the determinant term vanishes and the display recovers the classical self-normalised upper LIL. Beyond this finite-dimensional LIL, the mixed bound also yields a regularised upper LIL in arbitrary separable Hilbert spaces (\cref{cor:regularised-lil}) and finite-time spectrum-sensitive bounds for the unregularised self-normalised radius (\cref{cor:condition-number-bound}). It also gives a time-uniform Bernstein inequality for $\norm{M_n}$ in terms of $\tr(V_n)$ and $\norm{V_n}_{\op}$ (\cref{prop:covariance-sensitive-bernstein}).

\section{Preliminaries}
\label{sec:prelims}

We collect the operator notation used throughout and record basic properties of the log-determinant correction.

\subsection{Trace-class operators and conditional covariance}
Throughout, $\cH$ is a separable real Hilbert space. For $x,y\in\cH$, write
$x\otimes y$ for the rank-one operator $(x\otimes y)u=\langle x,u\rangle y$.
Then $x\otimes x$ is positive and $\tr(x\otimes x)=\norm{x}^2$. We write
$\trclass$ for the trace-class operators on $\cH$ and $\ptrclass$
for the positive trace-class operators. If $A,B$ are bounded self-adjoint operators,
$A\preceq B$ means $\langle u,Au\rangle\le\langle u,Bu\rangle$ for all $u\in\cH$. For $A\in\trclass$, write $\det(I+A)$ for the Fredholm determinant. When $A$ is self-adjoint,
\[
  \det(I+A)=\prod\nolimits_j(1+\lambda_j(A)),
\]
where the nonzero eigenvalues are counted with multiplicity. If $G\in\ptrclass$, then $\det(I+G)$ is strictly positive, while if $S\in\ptrclass$ and $\norm{S}_{\op}<1$, then $\det(I-S)$ is strictly positive. For the standard properties of the Fredholm determinant, see \citet[Ch.~3]{simon2005trace}.

If $X$ is an $\cH$-valued random variable with $\E\norm{X}^2<\infty$, then $X\otimes X$ is Bochner integrable in $\trclass$, since $\norm{X\otimes X}_{\trclass}=\norm{X}^2$. For a sub-$\sigma$-algebra $\cG$, write $\E[X\otimes X\mid\cG]$ for its conditional Bochner expectation. It belongs to $\ptrclass$ a.s., is characterized by
\[
  \langle u,\E[X\otimes X\mid\cG]v\rangle=\E[\langle u,X\rangle\langle X,v\rangle\mid\cG], \qquad u,v\in\cH,
\]
and satisfies $\tr\E[X\otimes X\mid\cG]=\E[\norm{X}^2\mid\cG]$ a.s.

\subsection{The log-determinant correction} For $\rho>0$ and $V\in\ptrclass$, define
\[
  \ell_\rho(V)=\frac12\log\det(I+\rho^{-1}V).
\]
When $\cH=\Rd$, $\ell_\rho(V)$ is the log-volume ratio of the ellipsoids associated with the positive definite matrices $\rho I$ and $V+\rho I$:
\[
  \ell_\rho(V) = \log \frac{\vol\curlyb{x\in\Rd:x\tran \rho I x\le1}}{\vol\curlyb{x\in\Rd:x\tran (V+\rho I)x\le1}}.
\]
Writing $\lambda_1,\dots,\lambda_d$ for the eigenvalues of $V$, Jensen's inequality also gives
\[
  \ell_\rho(V) = \frac{1}{2}\sum_{j=1}^d\log\roundb[\bigg]{1+\frac{\lambda_j}{\rho}} \le \frac{d}{2}\log\roundb[\bigg]{1+\frac{\tr V}{d\rho}}.
\]
The log-determinant correction is classical in multivariate self-normalisation; see, for example, \citet[Cor.~4.3]{delapena2004selfnormalized} and \citet[Thm.~14.7]{delapena2009selfnormalized}.

\section{Fixed-regularisation Bennett inequalities}\label{sec:main-results}

This section develops the fixed-$\rho$ theory. We first construct a nonnegative supermartingale and derive the corresponding time-uniform Bennett and Bernstein inequalities. We then compare these bounds with martingale-transform and directional sub-$\psi$ inequalities, extend the construction to compensated marked point processes, and give three applications illustrating its scope.

For $\rho>0$, $m\in\cH$ and $G\in\ptrclass$, define the potential function
\[
  U_\rho^h(m,G)=\exp\curlyb[\bigg]{\rho h\roundb[\bigg]{\frac{\norm{(G+\rho I)^{-1/2}m}}{\sqrt\rho}}-\ell_\rho(G)}.
\]
The following theorem gives the supermartingale underlying the fixed- and mixed-regularisation bounds.

\begin{theorem}\label{thm:bennett-logdet-potential}
Let $(M_n,\cF_n)_{n\ge0}$ be a martingale in $\cH$ with $M_0=0$ and increments
\[
  X_n=M_n-M_{n-1},
  \qquad
  \E[X_n\mid\cF_{n-1}]=0,
  \qquad
  \norm{X_n}\le1\quad\text{a.s.}
\]
Write
\[
  V_n=\sum_{i=1}^n\E[X_i\otimes X_i\mid\cF_{i-1}],
  \qquad n\ge0.
\]
Then, for every $\rho>0$,
\[
  \roundb{U_\rho^h(M_n,V_n)}_{n\ge0}
\]
is a nonnegative supermartingale with initial value one.
\end{theorem}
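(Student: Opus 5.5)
The plan is to reduce the statement to a deterministic one-step inequality for the potential $U_\rho^h$, and to prove that inequality after whitening. Nonnegativity of $U_\rho^h(M_n,V_n)$ is clear. Since $M_0=0$, $V_0=0$, $h(0)=0$ and $\ell_\rho(0)=0$, the initial value is one. Integrability will follow inductively from the supermartingale step. Because $M_{n-1}$ and $V_n=V_{n-1}+C_n$, with $C_n=\E[X_n\otimes X_n\mid\cF_{n-1}]$, are $\cF_{n-1}$-measurable, it suffices (via a regular conditional distribution of $X_n$ given $\cF_{n-1}$, which exists because $\cH$ is Polish) to prove the following. For every $m\in\cH$, every $G\in\ptrclass$ and every $\cH$-valued random variable $X$ with $\E X=0$, $\norm{X}\le1$ and $\E[X\otimes X]=C$,
\[
  \E\,U_\rho^h(m+X,G+C)\le U_\rho^h(m,G).
\]

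To prove this, I would whiten by $A=(G+\rho I)^{-1/2}$ and set $z=Am$, $Y=AX$ and $D=ACA$. The identities $\det(I+\rho^{-1}(G+C))=\det(I+\rho^{-1}G)\det(I+D)$ and $(G+C+\rho I)^{-1}=A(I+D)^{-1}A$ turn the target into
\[
  \E\exp\curlyb[\Big]{\rho h\roundb[\big]{\norm{(I+D)^{-1/2}(z+Y)}/\sqrt\rho}}\le \det(I+D)^{1/2}\exp\curlyb[\big]{\rho h\roundb{\norm{z}/\sqrt\rho}}.
\]
Here $\norm{Y}\le\norm{A}_{\op}\le\rho^{-1/2}$ and $\E[Y\otimes Y]=D$. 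Next I would represent the left-hand exponential as a mixture of linear exponentials, using the Legendre duality $\rho h(s/\rho)=\sup_{\lambda\ge0}\{\lambda s-\rho(e^\lambda-1-\lambda)\}$ together with a Gaussian integral over directions. Write $\phi(\lambda)=e^\lambda-1-\lambda$. For each fixed $\lambda$, integrating $\exp\langle\theta,w\rangle$ against a Gaussian law of $\theta$ with covariance proportional to $\lambda^2\rho(I+D)^{-1}$ produces a factor $\exp\{\tfrac12\lambda^2\rho\norm{(I+D)^{-1/2}w}^2\}$ and a Gaussian normaliser whose ratio against the unperturbed case is exactly $\det(I+D)^{1/2}$. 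This is the mechanism that generates the Fredholm determinant; the identity is valid because $D$ is trace class.

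For the linear exponentials, the one-step Bennett bound applies. Since $\langle\theta,Y\rangle\le\norm{\theta}\rho^{-1/2}$ and $\E\langle\theta,Y\rangle=0$, the inequality $e^x-1-x\le x^2\phi(a)/a^2$ for $x\le a$ gives
\[
  \E e^{\langle\theta,Y\rangle}\le\exp\curlyb[\big]{\phi(a)a^{-2}\langle\theta,D\theta\rangle},\qquad a=\norm{\theta}\rho^{-1/2}.
\]
The quadratic term $\langle\theta,D\theta\rangle$ is then absorbed into the change from $I$ to $I+D$ in the Gaussian exponent.

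The main obstacle is the mismatch between the Gaussian mixture, which produces quadratic (sub-Gaussian) exponents, and the Bennett rate, which needs $\phi$ evaluated at the random scale $\norm{\theta}$. I would first try to decouple scale from direction. One writes the potential as a supremum, or mixture, over a scalar $\lambda$ of potentials in which $\norm{\theta}$ is frozen at the scale $\lambda\sqrt\rho$, for instance by mixing over the unit sphere of the finite-dimensional span of $z$ and the range of a finite-rank truncation of $D$. One then checks the inequality scale by scale, using $\rho h(r/\sqrt\rho)=\sup_\lambda\{\lambda\sqrt\rho\, r-\rho\phi(\lambda)\}$, and uses convexity to pass the supremum through the expectation in the correct direction. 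Should that fail, the fallback is a direct analytic verification in the two-dimensional reduction spanned by $z$ and the conditional law of $Y$, using monotonicity of $h'$. The general trace-class case would then be recovered by finite-rank approximation of $D$, using continuity of the Fredholm determinant in trace norm and Fatou's lemma.
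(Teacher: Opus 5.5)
Your reduction to the one-step inequality and your whitening identities are correct (the paper whitens by the updated matrix $G+C+\rho I$ instead, which is cosmetic). The core step, however, has a genuine gap. As you set it up, two steps are deterministic statements valid for all arguments: representing the new potential by a mixture of linear exponentials, and comparing the resulting integral with the old potential. Consequently $\norm{X}\le1$ enters only through the directional bounds $\E e^{\langle\theta,Y\rangle}\le\exp\{\phi(a)a^{-2}\langle\theta,D\theta\rangle\}$, and no argument of this shape can prove the theorem.

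To see this, take $\cH=\R$, $m=0$ and $G=0$. Your bound then reads $\E e^{sX}\le e^{v\phi(\abs{s})}$ for $s\in\R$, with $v=\E X^2$, which is exactly the two-sided sub-Poisson condition. The law in \cref{prop:sub-poisson-counterexample} ($X=\pm2$ with probability $1/36$ each, $X=0$ otherwise) satisfies this condition. Yet $\E U_\rho^h(X,v)=1+1/(54\rho^2)+O(\rho^{-3})>1$ for large $\rho$ (see \cref{rem:directional-subpsi}). Your chain would apply verbatim to this law and conclude $\E U_\rho^h(X,v)\le1$, so it cannot close for any choice of mixing measure.

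The same obstruction appears in your `decouple scale from direction' step. Legendre duality writes $e^{\rho h}$ as a \emph{supremum} of linear exponentials, and $\E\sup_\lambda\ge\sup_\lambda\E$ always holds, which is the wrong direction for an upper bound. In infinite dimensions, a Gaussian with covariance proportional to $(I+D)^{-1}$ is moreover only cylindrical. Your fallback does not say which inequality would be verified, and the conditional law of $Y$ is not two-dimensional, so it does not fill the gap.

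What is missing is a way to use $\norm{X}\le1$ pointwise rather than through moment generating functions. The paper does this with \cref{lem:bennett-tangent}: for $F(x)=\exp\{\rho h(\norm{x})\}$ and $\norm{y}\le\rho^{-1}$,
\[
  F(x+y)\le\Bigl(1+\tfrac{\rho}{2}\norm{y}^2\Bigr)F(x)+\Bigl(1+\tfrac{\rho}{2}\langle x,y\rangle\Bigr)\langle\nabla F(x),y\rangle.
\]
The right-hand side is a polynomial of degree two in $y$. Set $B=G+C+\rho I$, $z=B^{-1/2}m/\sqrt\rho$, $Y=B^{-1/2}X/\sqrt\rho$ and $S=B^{-1/2}CB^{-1/2}$. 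Then the expectation of the right-hand side involves only the covariance: $\E F(z+Y)\le F(z)+\frac12F(z)\tr S+\frac12\langle\nabla F(z),Sz\rangle$.

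The determinant is then produced not by a Gaussian integral but by convexity of $\Theta(t)=-\frac12\log\det(I-tS)+\rho h(\norm{(I-tS)^{-1/2}z})$ on $[0,1]$ (\cref{lem:logdet-domination}). The first-order expression equals $e^{\Theta(0)}(1+\Theta'(0))\le e^{\Theta(1)}$, and $e^{\Theta(1)}\det(I+\rho^{-1}(G+C))^{-1/2}=U_\rho^h(m,G)$. Proving the tangent inequality is the hard part of the argument and is where boundedness is genuinely used. It reduces to the scalars $\norm{x}$, $\norm{x+y}$ and $\langle x/\norm{x},y\rangle$, followed by checking the extreme cases of a convex quadratic (\cref{app:scalar-bellman}).
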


The proof is given in \cref{sec:proof-bennett-logdet-potential}. For infinitesimal increments, a second-order expansion yields a one-step supermartingale bound. After centring and taking expectations, the quadratic terms agree with the first-order change in the regularised radius and log-determinant correction as the covariance is updated. A scalar inequality for $h$, together with convexity along the covariance interpolation, extends this local calculation to the full bounded-increment range.

The family of supermartingales in \cref{thm:bennett-logdet-potential} is mixed in \cref{sec:mixing-results} to obtain bounds that hold simultaneously over the regularisation scale.

\begin{remark}[Gaussian analogue]
The classical quadratic log-determinant potential is obtained by Gaussian mixing under a conditional sub-Gaussian increment condition. Let $(X_n,\cF_n)_{n\ge1}$ be a martingale-difference sequence in $\cH$, and let $W_n\in\ptrclass$ be $\cF_{n-1}$-measurable and satisfy
\[
  \E\left[\exp\curlyb{\langle\theta,X_n\rangle}\,\middle|\,\cF_{n-1}\right]
  \le
  \exp\curlyb[\bigg]{\frac12\langle\theta,W_n\theta\rangle}
  \quad\text{a.s. for every $\theta\in\cH$.}
\]
Write $g(u) = u^2/2$, $M_n=\sum_{i=1}^nX_i$, $G_n=\sum_{i=1}^nW_i$ and let $\gamma_\rho$ denote the centred Gaussian cylindrical measure on $\cH$ with covariance $\rho^{-1}I$. Interpreting the integral through finite-dimensional projections, define
\begin{align*}
  U_\rho^g(M_n,G_n)
  &:=
  \int_{\cH}\exp\curlyb[\bigg]{\langle\theta,M_n\rangle-\frac12\langle\theta,G_n\theta\rangle}\,\gamma_\rho(\dif\theta) \\
  &=
  \exp\curlyb[\bigg]{\frac12\norm{(G_n+\rho I)^{-1/2}M_n}^{2}-\ell_\rho(G_n)}.
\end{align*}
Then $\roundb{U_\rho^g(M_n,G_n)}_{n\ge0}$ is a nonnegative supermartingale. In finite dimensions, the same Gaussian-mixture calculation appears in \citet[Eqns.~4.18--4.20]{delapena2004selfnormalized}; see also the multivariate canonical formulation of \citet[Thm.~14.7]{delapena2009selfnormalized}.
\end{remark}

\begin{remark}[Sharpness of the log-determinant coefficient]\label{rem:scalar-calibration}
The coefficient $1/2$ multiplying $\log\det(I+\rho^{-1}G)$ in the Bennett potential $U_\rho^h$ cannot be reduced. Fix $\rho>0$, replace this coefficient by $\alpha>0$, and consider the one-step martingale with $X=\epsilon$ and $X=-\epsilon$ with probability $1/2$ each. Then $\abs{M_1}=\epsilon$, $V_1=\epsilon^2$, and the logarithm of the modified one-step potential is
\[
  \rho h\roundb[\bigg]{\frac{\epsilon}{\sqrt{\rho(\rho+\epsilon^2)}}}
  -\alpha\log(1+\epsilon^2/\rho)
  =
  \roundb[\bigg]{\frac12-\alpha}\frac{\epsilon^2}{\rho}
  +o(\epsilon^2),
  \qquad \epsilon\downarrow0.
\]
If $\alpha<1/2$, the modified potential is greater than one after one step, almost surely, for all sufficiently small $\epsilon$. Since its initial value is one, it fails to be a supermartingale.
\end{remark}

For each fixed $\rho>0$, Ville's inequality gives the following time-uniform Bennett bound.

\begin{corollary}\label{cor:bennett}
Under the assumptions of \cref{thm:bennett-logdet-potential}, for every $\rho>0$ and $y\ge0$,
\[
  \P[\bigg]{\exists n\ge1\colon \rho h\roundb[\bigg]{\frac{\norm{(V_n+\rho I)^{-1/2}M_n}}{\sqrt\rho}}>y+\ell_\rho(V_n)}\le e^{-y}.
\]
\end{corollary}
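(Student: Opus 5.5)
The plan is to read the corollary directly off \cref{thm:bennett-logdet-potential} via Ville's maximal inequality for nonnegative supermartingales. Fix $\rho>0$ and $y\ge0$, and set $Z_n=U_\rho^h(M_n,V_n)$. By the theorem, $(Z_n)_{n\ge0}$ is a nonnegative supermartingale with $Z_0=1$. Ville's inequality, that is, Doob's maximal inequality for nonnegative supermartingales extended to infinite horizon, then gives for every $c>0$
\[
  \P{\exists n\ge0\colon Z_n\ge c}\le \E Z_0/c = 1/c .
\]
We apply this with $c=e^{y}$.

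The next step is to identify the events. Since $\exp$ is strictly increasing and $Z_n=\exp\{\rho h(\norm{(V_n+\rho I)^{-1/2}M_n}/\sqrt\rho)-\ell_\rho(V_n)\}$, we have
\[
  \rho h\roundb[\bigg]{\frac{\norm{(V_n+\rho I)^{-1/2}M_n}}{\sqrt\rho}}>y+\ell_\rho(V_n)
  \iff Z_n>e^{y},
\]
which implies $Z_n\ge e^y$. The event in the corollary (over $n\ge1$) is therefore contained in $\{\exists n\ge0\colon Z_n\ge e^y\}$, and its probability is at most $e^{-y}$.

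Two routine points remain. First, $\ell_\rho(V_n)$ is finite, because $V_n\in\ptrclass$ and so $\det(I+\rho^{-1}V_n)\in[1,\infty)$; the Fredholm determinant facts recalled in \cref{sec:prelims} give this. Second, Ville's inequality needs no integrability beyond $\E Z_0=1$ and nonnegativity. If one wants the proof self-contained, use the stopping time $\tau=\inf\{n\colon Z_n\ge e^y\}$. Optional stopping for the bounded time $\tau\wedge N$ gives $e^y\P{\tau\le N}\le \E Z_{\tau\wedge N}\le 1$, and letting $N\to\infty$ gives the bound.

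There is no real obstacle here. All the substance lies in \cref{thm:bennett-logdet-potential}, and the corollary is a direct translation of Ville's inequality.
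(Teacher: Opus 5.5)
Your proposal is correct and follows the same route as the paper: Ville's inequality applied to the nonnegative supermartingale $U_\rho^h(M_n,V_n)$ from \cref{thm:bennett-logdet-potential}, followed by unpacking the exponential. Your optional-stopping justification of Ville's inequality and the finiteness check on $\ell_\rho(V_n)$ are harmless extras that the paper leaves implicit.
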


\begin{proof}
By \cref{thm:bennett-logdet-potential}, $(U_\rho^h(M_n,V_n))_{n\ge0}$ is a nonnegative supermartingale with initial value one, and Ville's inequality gives
\[
  \P{\exists n\ge0\colon U_\rho^h(M_n,V_n)>e^y}\le e^{-y}.
\]
The result follows by expanding the definition of $U_\rho^h$.
\end{proof}

Define
\[
  \psi_h(s)=e^s-1-s,\quad s \in[0,\infty);\qquad \psi_b(s)=\frac{s^2}{2(1-s/3)},\quad s \in [0,3).
\]
For a function $\psi$, write $\psi^\ast$ for its convex conjugate. Thus $h=\psi_h^\ast$; write $b=\psi_b^\ast$ for the Bernstein rate function. The following is the Bernstein relaxation of \cref{cor:bennett}.

\begin{corollary}\label{cor:bernstein}
Under the assumptions of \cref{thm:bennett-logdet-potential}, for every $\rho>0$ and $y\geq0$,
\[
  \P[\bigg]{\exists n\geq1\colon \norm{(V_n+\rho I)^{-1/2}M_n}>\sqrt{2\roundb[\big]{y+\ell_\rho(V_n)}}+\frac{y+\ell_\rho(V_n)}{3\sqrt\rho}}\leq e^{-y}.
\]
\end{corollary}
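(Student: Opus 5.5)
The plan is to deduce \cref{cor:bernstein} from \cref{cor:bennett} by a deterministic comparison of rate functions; no further probability is needed. Write $r_n=\norm{(V_n+\rho I)^{-1/2}M_n}$ and $z_n=y+\ell_\rho(V_n)$. It suffices to show the event inclusion
\[
  \curlyb[\bigg]{r_n>\sqrt{2z_n}+\frac{z_n}{3\sqrt\rho}}\subseteq\curlyb[\bigg]{\rho h\roundb[\Big]{\frac{r_n}{\sqrt\rho}}>z_n}
\]
for each $n$. The bound $e^{-y}$ then transfers directly from \cref{cor:bennett}.

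The first step is the pointwise inequality $h\ge b$ on $[0,\infty)$. This follows from $\psi_h\le\psi_b$ on $[0,3)$, which we check by comparing power series: $e^s-1-s=\sum_{k\ge2}s^k/k!$, while $\psi_b(s)=\frac{s^2}{2}\sum_{j\ge0}(s/3)^j$, and $k!\ge2\cdot3^{k-2}$ for $k\ge2$. Since $\psi_b=+\infty$ outside $[0,3)$ (extending $\psi_b$ by $+\infty$ for $s\ge3$ is implicit in its domain) and conjugation reverses order, we get $h=\psi_h^\ast\ge\psi_b^\ast=b$.

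The second step inverts $b$. We use the standard fact that $b^{-1}(x)\le\sqrt{2x}+x/3$ for $x\ge0$. For this it is enough to show $b(\sqrt{2x}+x/3)\ge x$, and by the definition of the conjugate it suffices to exhibit a single $s\in[0,3)$ with $s(\sqrt{2x}+x/3)-\frac{s^2}{2(1-s/3)}\ge x$. The choice $s=\sqrt{2x}/(1+\sqrt{2x}/3)$ should give equality, and checking this is routine algebra. Since $b$ is increasing on $[0,\infty)$, if $u>\sqrt{2x}+x/3$ then $b(u)\ge b(\sqrt{2x}+x/3)\ge x$, with strict inequality because $b$ is strictly increasing.

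The third step is rescaling. Apply the second step with $u=r_n/\sqrt\rho$ and $x=z_n/\rho$. The condition $r_n>\sqrt{2z_n}+z_n/(3\sqrt\rho)$ is exactly $u>\sqrt{2x}+x/3$. Hence $\rho h(u)\ge\rho b(u)>\rho x=z_n$, which is the Bennett event in \cref{cor:bennett}.

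The only point requiring care is the strictness and the explicit minimiser in the second step. If the closed-form check of $b(\sqrt{2x}+x/3)=x$ proves awkward, I would instead verify directly that $u\mapsto\rho b(u/\sqrt\rho)$ is increasing and evaluate it at the threshold via the quadratic formula for the optimal $s$.
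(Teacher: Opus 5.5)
Your proposal is correct and follows essentially the paper's proof: both reduce the statement to \cref{cor:bennett} via $b=\psi_b^\ast\le\psi_h^\ast=h$ (from $\psi_h\le\psi_b$ on $[0,3)$), then invert $b$ at $x=(y+\ell_\rho(V_n))/\rho$. Your choice $s=3t/(3+t)$ with $t=\sqrt{2x}$ does give $su-\psi_b(s)=x$ exactly at $u=\sqrt{2x}+x/3$, which makes explicit the paper's ``direct calculation'' of $b^{-1}$.
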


\begin{proof}
Since $\psi_h\leq\psi_b$ on $[0,3)$, convex conjugacy gives $b(u)\leq h(u)$ for $u\geq0$. Hence \cref{cor:bennett} remains valid with $h$ replaced by $b$. The function $b$ is strictly increasing, and direct calculation gives
\[
  b^{-1}(x)=\sqrt{2x}+\frac{x}{3},\qquad x\geq0.
\]
Applying this identity with $x=(y+\ell_\rho(V_n))/\rho$ proves the result.
\end{proof}

Since boundedness implies directional exponential-moment conditions, one might ask whether these conditions suffice to make the Bennett potential above, or the analogous potential obtained from the Bernstein rate, a supermartingale. The following remark shows that they do not.

\begin{remark}[Directional relaxations of boundedness]\label{rem:directional-subpsi}
Under the martingale-difference assumption, the bound $\norm{X_n}\leq1$ implies, for every $u\in\cH$ with $\norm{u}=1$ and every $s\in\Rp$,
\[
  \log\E\left[\exp\curlyb{s\langle u,X_n\rangle}\,\middle|\,\cF_{n-1}\right]\leq\psi_h(s)\E\left[\langle u,X_n\rangle^2\,\middle|\,\cF_{n-1}\right]\quad\text{a.s.}
\]
Indeed, this follows by taking conditional expectations in the scalar inequality
\[
  e^{sx}\leq1+sx+x^2\psi_h(s),\qquad \abs{x}\leq1,\quad s\in\Rp,
\]
and using $\log(1+x)\leq x$. Since $\psi_h(s)\leq\psi_b(s)$ for $0\leq s<3$, the corresponding directional conditional sub-$\psi_b$ inequalities hold with the same predictable variance.

For the Bernstein rate, define
\[
  U_\rho^b(m,G)=\exp\curlyb[\bigg]{\rho b\roundb[\bigg]{\frac{\norm{(G+\rho I)^{-1/2}m}}{\sqrt\rho}}-\ell_\rho(G)}.
\]
One may ask whether the directional conditional sub-$\psi_h$ inequalities suffice to make $\roundb{U_\rho^h(M_n,V_n)}_{n\geq0}$ a nonnegative supermartingale, or whether the corresponding sub-$\psi_b$ inequalities suffice for $\roundb{U_\rho^b(M_n,V_n)}_{n\geq0}$. Neither is true. \cref{prop:sub-poisson-counterexample} gives a centred scalar random variable $X$, with $v=\E X^2$, satisfying
\[
  \log\E e^{sX}=\log\E e^{-sX}\leq v\psi_h(s),\qquad s\in\Rp,
\]
and hence the corresponding sub-$\psi_b$ inequalities for $0\leq s<3$, but such that
\[
  \E U_\rho^h(X,v)>1 \spaced{and} \E U_\rho^b(X,v)>1
\]
for all sufficiently large $\rho$.
\end{remark}

\subsection{Comparison with related self-normalised inequalities}

We first consider inequalities for martingale transforms, whose increments are conditionally supported on predictable subspaces of dimension at most one. We then turn to directional sub-$\psi$ inequalities, where determinant-based variational arguments yield line boundaries that can be compared directly with \cref{cor:bennett,cor:bernstein}.

\paragraph{Martingale-transform setting}

Martingale-transform inequalities concern processes of the form
\[
  M_n=\sum_{i=1}^n\epsilon_i x_i,\qquad V_n=\sum_{i=1}^n\E[\epsilon_i^2\mid\cF_{i-1}]\,x_i\otimes x_i,
\]
where $x_i\in\cH$ is $\cF_{i-1}$-measurable and $\epsilon_i$ is a real-valued $\cF_i$-measurable martingale difference. This structure arises naturally in sequential generalised linear models; see \cref{ex:logistic}.

The seminal result in the martingale transform setting is the finite-dimensional Bernstein inequality of \citet[Thm.~1]{faury2020improved}, which further assumes that $\cH=\Rd$ and $\norm{\epsilon_i x_i}\leq1$ almost surely. Writing $L_n=y+\ell_\rho(V_n)$, their result gives
\[
  \P[\bigg]{\exists n\geq1\colon \norm{(V_n+\rho I)^{-1/2}M_n}>\frac{\sqrt\rho}{2}+\frac{2}{\sqrt\rho}\roundb[\big]{L_n+d\log2}}\leq e^{-y}.
\]
By comparison, \cref{cor:bernstein} gives the strictly smaller threshold $\sqrt{2L_n}+L_n/(3\sqrt\rho)$.

Recent work extends this martingale-transform framework to separable Hilbert spaces. \citet{akhavan2025bernstein,metelli2025generalized} treat bounded scalar noise, while \citet{martineztaboada2026vector} allow broader conditions on the scalar noise~$\epsilon_i$. Their formulations use different normalisers and variance processes, so a direct quantitative comparison with \cref{cor:bennett,cor:bernstein} is not immediate. All three nevertheless retain increments of the form $X_i=\epsilon_i x_i$, so the conditional law of $X_i$ is supported on a predictable subspace of dimension at most one. Consequently, each conditional covariance increment has rank at most one, and these results do not cover martingales with infinite-rank conditional covariance increments.

This restriction excludes the countable-alphabet and random-curve martingales discussed in \cref{sec:applications}; see \cref{ex:alphabet,ex:curves}. Their conditional covariance increments may have infinite rank, and both satisfy the assumptions of \cref{thm:bennett-logdet-potential}, whereas none of the martingale-transform inequalities cited above applies.

\paragraph{Directional sub-$\psi$ setting}
A second line of work derives self-normalised inequalities from exponential bounds in every fixed direction.

Let $\psi:[0,s_{\max})\to\Rp$. We call an adapted pair $(M_n,V_n)_{n\geq0}$, with $M_n$ taking values in $\cH$ and $V_n$ a positive self-adjoint operator, directionally sub-$\psi$ if, for every $u\in\cH$ with $\norm{u}=1$ and $0\leq s<s_{\max}$, the process
\[
  \exp\curlyb[\big]{s\langle u,M_n\rangle-\psi(s)\langle u,V_nu\rangle},\qquad n\geq0,
\]
is dominated by a nonnegative supermartingale with initial value one. Bounded increments imply the directional sub-$\psi_h$ and sub-$\psi_b$ conditions, also called the directional sub-Poisson and sub-gamma conditions, respectively.

Multivariate self-normalisation under related canonical assumptions goes back to \citet{delapena2004selfnormalized,delapena2009selfnormalized}. In the sub-Gaussian case, the log-determinant inequality extends to separable Hilbert spaces; see \citet[Cor.~3.5]{abbasiyadkori2013online}. General directional sub-Poisson and sub-gamma inequalities have so far only been obtained in finite dimensions. \citet{whitehouse2026timeuniform} obtain condition-number-dependent bounds through a covering argument; we compare these with our mixed-regularisation results in \cref{sec:condition-num}. A determinant-based variational approach was developed by \citet{ziemann2025vector} in the martingale-transform setting and extended by \citet{chugg2025variational} to general directional sub-$\psi$ processes.

Among results beyond the martingale-transform setting, \citet{chugg2025variational} provide a particularly strong determinant-based baseline: their variational inequality applies to general directional sub-$\psi$ processes and retains the log-determinant geometry.

\begin{remark}[Comparison with PAC--Bayes line boundaries]\label{rem:chugg-pac-bayes-comparison}
We specialise the variational inequality in \citet[Eq.~(B.8)]{chugg2025variational} to the functions $\psi_h$ and $\psi_b$.

Fix $\cH=\Rd$, $\rho>0$, $y\geq0$ and $f\in\curlyb{h,b}$. Write $\psi_f$ for the corresponding function and $s_f$ for the right endpoint of its domain, so that $f=\psi_f^\ast$, $s_h=\infty$ and $s_b=3$. Fix $0<a\leq\rho$ and $0<s<s_f$. For $n\geq1$, write $L_n=y+\ell_\rho(V_n)$ and define $q_d=d/(d+2)$,
\[
  \chi_{n,\rho}=1-\sqrt{\frac{\rho}{\rho+\lambda_{\min}(V_n)}}, \qquad B_{n,\rho}^{f}(a,s)=\frac{L_n+a\roundb[\big]{q_d+\chi_{n,\rho}^2}\psi_f(s)}{s\sqrt a\,\chi_{n,\rho}},
\]
with the convention $B_{n,\rho}^{f}(a,s)=\infty$ when $\chi_{n,\rho}=0$. Their argument gives\footnotemark{}
\[
  \P[\bigg]{\exists n\geq1\colon \norm{(V_n+\rho I)^{-1/2}M_n}>B_{n,\rho}^{f}(a,s)}\leq e^{-y}.
\]
For each fixed $a,s$, every linear-in-$L_n$ boundary $B_{n,\rho}^f(a,s)$ lies strictly above the corresponding curved boundary in \cref{cor:bennett,cor:bernstein}.

Indeed, if $L_n=0$, then $y=0$ and $V_n=0$, so $\chi_{n,\rho}=0$ and $B_{n,\rho}^{f}(a,s)=\infty$. Fix $n$ with $L_n>0$. The claim is immediate when $B_{n,\rho}^{f}(a,s)=\infty$. Otherwise, set $x=L_n/\rho$ and $r=\chi_{n,\rho}\sqrt{a/\rho}\in(0,1]$. Since $q_d>0$ and $\psi_f(s)>0$, while the monotonicity of $s\mapsto\psi_f(s)/s^2$ gives $r^2\psi_f(s)\geq\psi_f(sr)$, we obtain
\begin{align*}
    \frac{B_{n,\rho}^{f}(a,s)}{\sqrt\rho}
  =\frac{x+r^2\roundb[\big]{1+q_d/\chi_{n,\rho}^2}\psi_f(s)}{sr}
  &>\frac{x+r^2\psi_f(s)}{sr}
  \geq\frac{x+\psi_f(sr)}{sr}\\
  &\qquad\qquad\geq\inf_{0<z<s_f}\frac{x+\psi_f(z)}{z}
  =f^{-1}(x).
\end{align*}
The final identity is the variational representation of $f^{-1}$. Multiplying by $\sqrt\rho$ gives
\[
  B_{n,\rho}^{f}(a,s)>\sqrt\rho\,f^{-1}\roundb[\bigg]{\frac{L_n}{\rho}}.
\]
The right-hand side is exactly the Bennett threshold in \cref{cor:bennett} when $f=h$, and equals $\sqrt{2L_n}+L_n/(3\sqrt\rho)$, the Bernstein threshold in \cref{cor:bernstein}, when $f=b$.
\end{remark}

\footnotetext{Starting from \citet[Eq.~(B.8)]{chugg2025variational}, take $U_0=\rho I$ and $\delta=e^{-y}$, and choose the prior and posterior to be uniform on ellipsoids with shape matrices $a(\rho I)^{-1}$ and $a(V_n+\rho I)^{-1}$, respectively. Retaining the posterior-covariance factor $d/(d+2)$ and using the mean shift from their containment argument gives the displayed threshold. In arXiv v2, the final specialisation writes the ellipsoid shape using the functional inverse $\psi_f^{-1}(s)$, while the subsequent covariance calculation uses the reciprocal $1/\psi_f(s)$; retaining $a$ avoids this mismatch.}

\citet{chugg2025variational} combine these line boundaries by stitching over geometric log-determinant epochs. More generally, fixing the dual parameter in variational or exponential-tilt arguments yields linear boundaries \citep{howard2020timeuniform}, while geometric stitching adapts their slopes across variance scales \citep{howard2021timeuniform}. By contrast, \cref{thm:bennett-logdet-potential} gives the curved Bennett boundary directly through a single supermartingale, and its Bernstein relaxation requires no stitching.

\subsection{Continuous-time self-normalised Bennett inequality}
\label{sec:continuous-bounded-jumps}

The fixed-$\rho$ potential extends to compensated marked point processes with bounded jumps. In the scalar case, \cref{cor:bennett} is the self-normalised analogue of Freedman's Bennett-form martingale inequality recalled in the introduction. \citet[Cor.~3.4(iii)]{dzhaparidze2001bernstein} establish the corresponding continuous-time inequality with predictable quadratic variation restricted to a level fixed in advance; the proposition below gives its Hilbert-valued self-normalised extension.

\begin{proposition}[restate=continuousBoundedJumps,name=]\label{prop:continuous-bounded-jumps}
Let $(\cF_t)_{t\geq0}$ be a filtration satisfying the usual conditions, let $\cZ$ be a measurable space, and let $\mu$ be an integer-valued random measure on $\Rp\times\cZ$ with predictable compensator $\nu$. Assume that, on an event of probability one, $\nu(\{t\}\times\cZ)=0$ and $\mu(\{t\}\times\cZ)\leq1$ for every $t\geq0$. Let $H=(H_t(z))_{t\geq0,\,z\in\cZ}$ be predictable and $\cH$-valued, with $\norm{H_t(z)}\leq1$, and suppose that, for every $T<\infty$,
\[
  \int_{(0,T]\times\cZ}\norm{H_t(z)}^2\,\nu(\dif t,\dif z)<\infty \qquad\text{a.s.}
\]
Define
\[
  M_t=\int_{(0,t]\times\cZ}H_s(z)\,(\mu-\nu)(\dif s,\dif z), \qquad V_t=\int_{(0,t]\times\cZ}H_s(z)\otimes H_s(z)\,\nu(\dif s,\dif z).
\]
Then $M$ is a purely discontinuous local martingale with $\norm{\Delta M_t}\leq1$ for every $t\geq0$, and $V$ is its operator-valued predictable quadratic variation. For every $\rho>0$, $(U_\rho^h(M_t,V_t))_{t\geq0}$ is a nonnegative supermartingale with initial value one. Consequently, for every $y\geq0$,
\[
  \P[\bigg]{\exists t\geq0:\rho h\roundb[\bigg]{\frac{\norm{(V_t+\rho I)^{-1/2}M_t}}{\sqrt\rho}}>y+\ell_\rho(V_t)}\leq e^{-y}.
\]
\end{proposition}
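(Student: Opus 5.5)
The plan is to use It\^o's formula for purely discontinuous semimartingales, applied to $F(M_t,V_t)$ with $F(m,G)=U_\rho^h(m,G)$. The jump version of the one-step inequality behind \cref{thm:bennett-logdet-potential} should show that the compensator of $F(M,V)$ is nonincreasing.

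\textbf{Preliminary properties.} Since $\nu$ has no atoms in time, $V$ is continuous, of finite variation, and trace-class valued. The integrability assumption makes $M$ a locally square-integrable purely discontinuous local martingale, with $\Delta M_t=H_t(z)$ at atoms of $\mu$. Hence $\norm{\Delta M_t}\leq1$. Checking directions $u,v$ identifies $V$ as the predictable quadratic covariation. Because $M$ has finite variation on compacts (locally), $M_t=\sum_{s\le t}\Delta M_s-\int H\,\dif\nu$ pathwise, so no second-order continuous term arises.

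\textbf{Change-of-variables formula.} For the smooth function $F$, with $\norm{(G+\rho I)^{-1/2}m}$ smooth enough away from $m=0$ and $h$ being $C^1$ with $h'(0)=0$, we get
\[
F(M_t,V_t)=1+\int_0^t\bigl[-D_mF\cdot\textstyle\int H\,\nu(\dif s,\dif z)+D_GF[\dif V_s]\bigr]+\sum_{s\le t}\bigl[F(M_{s-}+\Delta M_s,V_s)-F(M_{s-},V_s)\bigr].
\]
Compensating the jump sum by $\nu$, the drift becomes
\[
\int_{\cZ}\Bigl\{F(m+H(z),G)-F(m,G)-D_mF(m,G)[H(z)]+D_GF(m,G)[H(z)\otimes H(z)]\Bigr\}\,\nu(\dif t,\dif z).
\]
It therefore suffices to prove the pointwise inequality
\[
F(m+x,G)-F(m,G)-D_mF[x]+D_GF[x\otimes x]\le0 \qquad \text{for all }\norm{x}\le1,\ m,\ G\in\ptrclass.
\]
This is the infinitesimal-covariance-update form of the one-step estimate. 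I expect it to be exactly the key lemma in the proof of \cref{thm:bennett-logdet-potential}: that proof adds covariance $\E[X\otimes X]$ along an interpolation and uses convexity. Its $\mathrm{d}t\to0$ limit, where the covariance increment is infinitesimal while the jump stays of size up to one, is the statement above. If the paper's lemma is stated only for finite increments, I would derive the pointwise inequality by applying that lemma to a two-point law. Take $X=x$ with probability $p$ and $X=-px/(1-p)$ with probability $1-p$, with covariance $p\,x\otimes x+O(p^2)$. Dividing by $p$ and letting $p\downarrow0$, the $-D_mF[x]$ term appears from the compensating small move.

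\textbf{Concluding.} The drift is then $\le0$. Hence $F(M,V)$ is a nonnegative local supermartingale, because a local martingale plus a nonincreasing process is one. By Fatou's lemma it is a true supermartingale with initial value one. Ville's inequality for right-continuous nonnegative supermartingales then gives the displayed bound exactly as in \cref{cor:bennett}.

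\textbf{Main obstacle.} The main obstacle is the pointwise jump inequality, in particular the regularity of $F$ at $m=0$ and the infinite-dimensional Fr\'echet differentiability in $G$, which requires trace-class calculus for $\log\det$. I would first try to reduce to finite-dimensional projections onto $\mathrm{span}\{m,x\}$ plus finitely many eigenvectors of $G$, and then pass to limits.
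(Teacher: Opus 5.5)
Your skeleton is the paper's: an It\^o--Meyer decomposition of $U_\rho^h(M_t,V_t)$, a drift inequality extracted from the one-step estimate~\eqref{eq:one-step} by a two-point law (your atoms $x$ and $-px/(1-p)$ work as well as the paper's centred Bernoulli), then Fatou and Ville. Two steps, however, fail as written.

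First, $M$ need not have locally finite variation, since the hypotheses only control $\int\norm{H}^2\,\dif\nu$. Take a Poisson random measure on $\Rp\times(0,1]$ with intensity $\dif t\otimes z^{-2}\,\dif z$ and $H_t(z)=z$. All hypotheses hold and $\int_{(0,T]\times\cZ}\norm{H}^2\,\dif\nu=T$, yet $\int\norm{H}\,\dif\nu=\infty$ and $\sum_{s\le t}\norm{\Delta M_s}=\infty$ a.s. So neither $\int H\,\dif\nu$ nor your uncompensated jump sum exists, and your change-of-variables display is meaningless. The drift you end with is nonetheless correct. It must, however, come from the general It\^o--Meyer formula, with the stochastic integral $\int\mathrm{D}_mU_\rho^h(M_{s-},V_s)[\dif M_s]$ and the sum of the remainders $U_\rho^h(M_s,V_s)-U_\rho^h(M_{s-},V_s)-\mathrm{D}_mU_\rho^h(M_{s-},V_s)[\Delta M_s]$. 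That sum converges, and its $\nu$-compensator is finite, only because these remainders are $O(\norm{\Delta M_s}^2)$ after localisation. This requires $U_\rho^h$ to be $C^2$ in $m$ at $m=0$, which follows from $h'(r)=r+O(r^2)$ and $h''(r)=1+O(r)$; being merely $C^1$, as you check, is not enough. Also, there is no continuous second-order term because $M$ is purely discontinuous, not because of finite variation.

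Second, and more seriously, the general Hilbert-space case has no working argument, and the obstacle you name is not the real one. The maps $G\mapsto\ell_\rho(G)$ and $G\mapsto\langle m,(G+\rho I)^{-1}m\rangle$ are Fr\'echet differentiable in trace norm with explicit derivatives. Your two-point argument then yields the pointwise drift inequality in $\cH$ directly from \eqref{eq:one-step}, which already holds there. Projecting onto $\mathrm{span}\{m,x\}$ and eigenvectors of $G$ therefore solves a non-problem.

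What is missing is the stochastic calculus. You would need an It\^o formula for the $\cH\times\trclass$-valued semimartingale $(M,V)$. But $\trclass$ is not a Hilbert space and $U_\rho^h$ is defined only where $G+\rho I$ is boundedly invertible, so no standard formula applies. A local-domain version of a Banach-space It\^o--F\"ollmer formula might work, but you would have to establish it. Nor can random, time-varying subspaces such as $\mathrm{span}\{M_{s-},H_s(z)\}$ be used to project the processes.

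The paper's device is to project the integrand rather than the inequality. If $P_k$ projects onto the first $k$ vectors of an orthonormal basis, then $P_kH$ generates exactly $P_kM$ and $P_kVP_k$. These satisfy the hypotheses in $P_k\cH\cong\R^k$, where M\'etivier's It\^o--Meyer formula applies after localisation and a smooth cutoff. For each fixed $t$, $U_\rho^h(P_kM_t,P_kV_tP_k)\to U_\rho^h(M_t,V_t)$. This uses strong convergence of $P_k$, trace-norm convergence $P_kV_tP_k\to V_t$, continuity of the Fredholm determinant, and operator-norm convergence of the inverse square roots. Conditional Fatou then carries $\E[U_\rho^h(P_kM_t,P_kV_tP_k)\mid\cF_s]\le U_\rho^h(P_kM_s,P_kV_sP_k)$ to the limit.
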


The proof is given in \cref{sec:proof-continuous-bounded-jumps}. An infinitesimal limit of the one-step estimate underlying \cref{thm:bennett-logdet-potential} yields a compensator drift inequality. In finite dimensions, combining this inequality with an Itô--Meyer formula gives the supermartingale property; finite-rank projection then extends the result to a general separable Hilbert space. A~Bernstein relaxation follows as in discrete time.

For continuous local martingales, \citet[Cor.~4.3]{delapena2004selfnormalized} establish the following exact quadratic log-determinant identity. Let $M$ be an $\Rd$-valued continuous local martingale with $M_0=0$ and predictable quadratic variation $V$. Suppose that $\lambda_{\min}(V_t)\to\infty$ almost surely and $\E\exp{\langle\theta,V_t\theta\rangle}<\infty$ for every $\theta\in\Rd$ and $t>0$. Then, for every $\rho>0$ and $y>0$,
\[
  \P[\bigg]{\exists t\geq0\colon \frac12\norm{(V_t+\rho I)^{-1/2}M_t}^2\geq y+\frac12\log\det(I+\rho^{-1}V_t)}=e^{-y}.
\]
This has the same regularised radius and log-determinant correction as \cref{prop:continuous-bounded-jumps}, with the quadratic rate replacing the Bennett rate; it applies to continuous local martingales under the stated asymptotic and integrability conditions.

The mixing construction developed in the next section applies equally to the continuous-time supermartingales above. In particular, it yields bounds on $\norm{M_t}$ in terms of $\norm{V_t}_{\op}$ and $\tr(V_t)$, in the spirit of \citet[Thm.~5.1]{kallenberg1991dimensionfree}. We state only the corresponding discrete-time result in \cref{sec:ordinary-norm-bounds}.

\subsection{Applications and scope}\label{sec:applications}

We give three examples illustrating both the motivation for and the scope of the fixed-$\rho$ inequalities. Sequential logistic regression is a canonical martingale-transform application. The countable-alphabet and random-curve examples have conditional covariance increments that may have infinite rank: they are covered by \cref{thm:bennett-logdet-potential}, whereas none of the martingale-transform or directional sub-$\psi$ inequalities discussed above applies. Each example also has a direct marked-point-process formulation covered by \cref{prop:continuous-bounded-jumps}; we do not state these formulations separately.

\begin{example}[Sequential logistic regression]\label{ex:logistic}
Let $\sigma(u)=(1+e^{-u})^{-1}$ and fix $\theta_\star\in\cH$. Let $x_i\in\cH$ be $\cF_{i-1}$-measurable with $\norm{x_i}\leq1$, and let $Y_i$ be an $\cF_i$-measurable Bernoulli random variable satisfying $\P{Y_i=1\mid\cF_{i-1}}=\sigma(\langle\theta_\star,x_i\rangle)=:m_i$. Define
\[
  M_n=\sum_{i=1}^n(Y_i-m_i)x_i,\qquad V_n=\sum_{i=1}^nm_i(1-m_i)x_i\otimes x_i.
\]
Then $(M_n)_{n\geq0}$ is a martingale with predictable covariance process $V_n$, and its increments are bounded by one. Moreover, $M_n$ and $V_n$ are respectively the gradient and negative Hessian of the cumulative logistic log-likelihood at $\theta_\star$. Thus \cref{cor:bennett} gives time-uniform control of the likelihood gradient in the exact curvature of the likelihood.

Under a known bound $\norm{\theta_\star}\leq b$, Theorem~3.1 of \citet[]{akhavan2025bernstein} converts the resulting concentration inequality into a time-uniform sequence of confidence ellipsoids for $\theta_\star$, centred at the ridge-regularised logistic maximum-likelihood estimator.
\end{example}

We now turn to two applications beyond the martingale-transform setting.

\begin{example}[Countable alphabets]\label{ex:alphabet}
Let $Z_i$ be an $\Np$-valued $\cF_i$-measurable random variable, and write
\[
  p_i(j)=\P{Z_i=j\mid\cF_{i-1}},\qquad p_i=(p_i(j))_{j\geq1}.
\]
Let $(e_j)_{j\geq1}$ be the standard orthonormal basis of $\ell^2(\Np)$, and set $X_i=\frac{e_{Z_i}-p_i}{\sqrt2}$. Then $\E[X_i\mid\cF_{i-1}]=0$ and $\norm{X_i}\leq1$. Writing $\widehat p_n=n^{-1}\sum_{i=1}^ne_{Z_i}$, the corresponding martingale and predictable covariance process are
\[
  M_n=\frac{n}{\sqrt2}\roundb[\bigg]{\widehat p_n-\frac1n\sum_{i=1}^np_i},\qquad V_n=\frac12\sum_{i=1}^n\roundb[\big]{\diag(p_i)-p_i\otimes p_i}.
\]
\cref{cor:bennett} gives time-uniform control of the empirical mass function around the average conditional mass function in its exact predictable covariance geometry.

If $p_i$ has infinite support, then $\diag(p_i)$ has infinite rank while $p_i\otimes p_i$ has rank one, so the conditional covariance operator of $X_i$ has infinite rank.

Countably supported models arise, for example, in infinite-occupancy and infinite-species problems \citep{karlin1967central,gnedin2007notes,anevski2017estimating}.
\end{example}

\begin{example}[Random curves]\label{ex:curves}
Let $\cT$ be a compact interval, and let $f_i$ be $\cF_i$-measurable random elements of $L^2(\cT)$ satisfying, for some $B>0$,
\[
  \E[f_i\mid\cF_{i-1}]=0,\qquad \norm{f_i}_{L^2(\cT)}\leq B\quad\text{almost surely}.
\]
Set
\[
  M_n=\frac1B\sum_{i=1}^nf_i,\qquad V_n=\frac1{B^2}\sum_{i=1}^n\E[f_i\otimes f_i\mid\cF_{i-1}].
\]
\cref{cor:bennett} gives time-uniform self-normalised control of the partial sums.

The conditional covariance operator may have infinite rank after a single observation. For example, let $\cT=[0,1]$, let $g\in L^2([0,1])$ be real-valued and mean-zero, and extend $g$ $1$-periodically to $\R$. Suppose that $g$ has infinitely many nonzero Fourier coefficients, and let $U_i$ be $\cF_i$-measurable and conditionally uniform on $[0,1]$ given $\cF_{i-1}$. Taking
\[
  f_i=g(\,\cdot-U_i)
\]
gives $\E[f_i\mid\cF_{i-1}]=0$ and $\norm{f_i}_{L^2([0,1])}=\norm{g}_{L^2([0,1])}$ almost surely. The conditional covariance operator is convolution with the autocorrelation of $g$. It is diagonal in the Fourier basis, and its eigenvalue at a given frequency is positive whenever the corresponding Fourier coefficient of $g$ is nonzero. Hence it has infinite rank.

Random curves in $L^2(\cT)$ are studied throughout functional data analysis; see \citet{hall2006properties,hoermann2010weakly,panaretos2013fourier}.
\end{example}

\section{Mixing over the regularisation scale}\label{sec:mixing-results} The fixed-$\rho$ inequalities require the regularisation scale to be chosen in advance. We now mix the family of supermartingales from \cref{thm:bennett-logdet-potential} to obtain a bound that holds simultaneously over candidate scales. This permits the scale to be selected from the predictable covariance process. We use this to derive spectrum-sensitive laws of the iterated logarithm and finite-time bounds for the unregularised self-normalised radius. A final consequence gives a Bernstein inequality for the martingale norm with separate dependence on $\tr(V_n)$ and~$\norm{V_n}_{\op}$.

Let $\pi$ be a Borel probability measure on $\Rpp$. For $a>0$ and $u>1$, define the interval-mass penalty
\[
  \Delta_\pi(a,u)=-\log\pi([a,ua]),
\]
with the convention $-\log0=\infty$. For $\rho>0$, write
\[
  R_{n,\rho}=\norm{(V_n+\rho I)^{-1/2}M_n}.
\]
The following theorem gives the general mixed-regularisation bound.

\begin{theorem}\label{thm:mixed-rho-bennett}
Under the assumptions of \cref{thm:bennett-logdet-potential}, for every $y\ge0$,
\[
  \P[\bigg]{\exists n\ge1,\ \exists a>0,\ \exists u>1: ua h\roundb[\bigg]{\frac{R_{n,a}}{u\sqrt{a}}} > y+\ell_a(V_n)+\Delta_\pi(a,u)} \le e^{-y}.
\]
\end{theorem}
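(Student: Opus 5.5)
The plan is to mix the supermartingales of \cref{thm:bennett-logdet-potential} over $\rho$. Set
\[
  \bar U_n=\int_{\Rpp}U_\rho^h(M_n,V_n)\,\pi(\dif\rho),\qquad n\ge0.
\]
The integrand is jointly measurable in $(\rho,\omega)$, being continuous in $\rho$. Conditional Fubini (Tonelli) applied to nonnegative integrands therefore shows that $(\bar U_n)$ is a nonnegative supermartingale with $\bar U_0=1$. Ville's inequality then gives $\P{\exists n:\bar U_n\ge e^y}\le e^{-y}$. It remains to show, deterministically, that whenever $ua\,h(R_{n,a}/(u\sqrt a))>y+\ell_a(V_n)+\Delta_\pi(a,u)$ for some $a>0$ and $u>1$, we have $\bar U_n>e^y$.

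The deterministic step restricts the mixture to $\rho\in[a,ua]$ and lower-bounds the integrand there uniformly in $\rho$. This gives $\bar U_n\ge\pi([a,ua])\inf_{\rho\in[a,ua]}U^h_\rho(M_n,V_n)$. Two monotonicity facts are needed on this interval.

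First, $\ell_\rho(V_n)$ is nonincreasing in $\rho$, so $\ell_\rho(V_n)\le\ell_a(V_n)$.

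Second, the Bennett term is bounded below as
\[
  \rho\,h\!\left(\frac{R_{n,\rho}}{\sqrt\rho}\right)\ge ua\,h\!\left(\frac{R_{n,a}}{u\sqrt a}\right).
\]
This follows from three observations:
\begin{itemize}
  \item $R_{n,\rho}\ge R_{n,ua}$ for $\rho\le ua$, by operator monotonicity of $(V+\rho I)^{-1}$ in $\rho$.
  \item The map $\rho\mapsto\rho h(r/\sqrt\rho)$ is nonincreasing in $\rho$ for fixed $r$, since $\rho h(r/\sqrt\rho)=r^2\phi(r/\sqrt\rho)$ with $\phi(x)=h(x)/x^2$ increasing. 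Evaluating at the right endpoint gives $\rho h(R_{n,\rho}/\sqrt\rho)\ge ua\,h(R_{n,ua}/\sqrt{ua})$.
  \item To compare $R_{n,ua}$ with $R_{n,a}$, use $(V+ua I)\preceq u(V+aI)$, which gives $R_{n,ua}\ge R_{n,a}/\sqrt u$. Hence $R_{n,ua}/\sqrt{ua}\ge R_{n,a}/(u\sqrt a)$, and monotonicity of $h$ yields the bound above.
\end{itemize}

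The expression $ua\,h(R_{n,a}/(u\sqrt a))$ is exactly the term in the statement, which confirms this is the intended route.

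Combining the two facts, on the event in the statement,
\[
  \bar U_n\ge\pi([a,ua])\exp\!\left\{ua\,h\!\left(\frac{R_{n,a}}{u\sqrt a}\right)-\ell_a(V_n)\right\}>\pi([a,ua])\,e^{y+\Delta_\pi(a,u)}=e^y,
\]
where the last equality uses $\Delta_\pi(a,u)=-\log\pi([a,ua])$. If $\pi([a,ua])=0$ the hypothesis cannot hold, since then $\Delta_\pi=\infty$. So the event in the statement is contained in $\{\exists n:\bar U_n>e^y\}$, and the bound $e^{-y}$ follows.

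The step needing the most care is the second monotonicity bound: checking that $h(x)/x^2$ is increasing (e.g.\ via $h(x)/x^2=\int_0^1(1-t)/(1+tx)\,\dif t$, which is in fact decreasing, so the direction must be rechecked). If it turns out to be decreasing, I will instead argue from the left endpoint in the first factor and compare $R_{n,\rho}$ through $(V+\rho I)\preceq (\rho/a)(V+aI)$. This gives
\[
  \rho\,h\!\left(\frac{R_{n,\rho}}{\sqrt\rho}\right)\ge\rho\,h\!\left(\frac{\sqrt a\,R_{n,a}}{\rho}\right)
\]
for $\rho\ge a$. This is then minimised at $\rho=ua$ because $\rho\mapsto\rho h(c/\rho)$ is decreasing, again yielding $ua\,h(R_{n,a}/(u\sqrt a))$. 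This second route uses only convexity of $h$ with $h(0)=0$, and I expect it to be the correct one. The measurability and Fubini points are routine.
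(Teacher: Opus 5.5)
Your fallback argument is correct and is essentially the paper's proof. The bulleted route you present first is wrong, though, and should be deleted rather than kept as the main line.

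\textbf{The bulleted route fails.} Since $h(x)/x^2=\int_0^1(1-t)/(1+tx)\,dt$ is decreasing, the map $\rho\mapsto\rho h(r/\sqrt{\rho})=r^2\phi(r/\sqrt{\rho})$ is nondecreasing in $\rho$; it rises from $0$ to $r^2/2$. Evaluating at the right endpoint $\rho=ua$ therefore gives an upper bound on $\rho h(R_{n,ua}/\sqrt{\rho})$, not the lower bound you need.

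\textbf{The fallback is correct.} For $\rho\in[a,ua]$, the relation $V_n+\rho I\preceq(\rho/a)(V_n+aI)$ gives $\sqrt{\rho}R_{n,\rho}\ge\sqrt{a}R_{n,a}$. Monotonicity of $h$ then gives $\rho h(R_{n,\rho}/\sqrt{\rho})\ge\rho h(\sqrt{a}R_{n,a}/\rho)$. The perspective $\rho\mapsto\rho h(c/\rho)$ is nonincreasing because $h$ is convex with $h(0)=0$. So the infimum over the interval is at least $ua\,h(R_{n,a}/(u\sqrt{a}))$.

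\textbf{Comparison with the paper.} The paper uses the same mixture, the same restriction to $[a,ua]$, and the same comparisons $\sqrt{\rho}R_{n,\rho}\ge\sqrt{a}R_{n,a}$ and $\ell_\rho(V_n)\le\ell_a(V_n)$. The only difference is how the rate function is handled. The paper works in the dual, writing $\rho h(c/\rho)=\sup_{s\ge0}\{sc-\rho\psi_h(s)\}$. It fixes one $s$ that strictly beats the threshold at $\rho=ua$, and then uses $\rho\psi_h(s)\le ua\,\psi_h(s)$. Your perspective-monotonicity step is the primal counterpart of $\psi_h\ge0$. The two arguments are equivalent; yours simply avoids choosing $s$.
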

\begin{proof}
For each $\rho>0$, \cref{thm:bennett-logdet-potential} gives that
$(U_\rho^h(M_n,V_n))_{n\ge0}$ is a nonnegative supermartingale with initial value one. Hence
\[
  \cL_n=\int_0^\infty U_\rho^h(M_n,V_n)\,\pi(\dif\rho)
\]
is a nonnegative supermartingale with $\cL_0=1$. By Ville's inequality, it remains to show that the
event in the theorem is contained in $\{\exists n\ge1:\cL_n>e^y\}$.

Suppose the event occurs, and choose $n,a,u$ for which the displayed inequality holds. Then
$\Delta_\pi(a,u)<\infty$. By the conjugacy of $h$ with $s\mapsto e^s-1-s$ on $\Rp$, there is an
$s>0$ such that
\[
  s\sqrt a R_{n,a}-ua(e^s-1-s) > y+\ell_a(V_n)+\Delta_\pi(a,u).
\]
For every $\rho\in[a,ua]$,
$\sqrt\rho R_{n,\rho}\ge\sqrt a R_{n,a}$, $\rho(e^s-1-s)\le ua(e^s-1-s)$, and
$\ell_\rho(V_n)\le\ell_a(V_n)$. Taking logarithms in the definition of $U_\rho^h$ gives
\[
  \log U_\rho^h(M_n,V_n) \ge s\sqrt a R_{n,a}-ua(e^s-1-s)-\ell_a(V_n) > y+\Delta_\pi(a,u).
\]
Therefore
\[
  \cL_n \ge \int_{[a,ua]}U_\rho^h(M_n,V_n)\,\pi(\dif\rho) > \pi([a,ua])e^{y+\Delta_\pi(a,u)} = e^y.
\]
Ville's inequality gives the claim.
\end{proof}

The consequences below use the following logarithmic mixing law $\varpi$ on $\Rpp$, or a rescaling thereof:
\[
  \varpi(\dif\rho)=\frac{(e+1)\,\dif\rho}{2\rho(e+\abs{\log\rho})\{e+\log(e+\abs{\log\rho})\}^2}, \qquad \rho>0.
\]
The next lemma records its interval mass and the resulting asymptotic penalty.
\begin{lemma}\label{lem:logarithmic-mixing}
Fix $u>1$. For every $a\geq1$, writing $x=\log a$ and $c=\log u$,
\[
  \varpi([a,ua])=\frac{e+1}{2}\roundb[\bigg]{\frac{1}{e+\log(e+x)}-\frac{1}{e+\log(e+x+c)}}.
\]
The map $a\mapsto\Delta_\varpi(a,u)$ is nondecreasing on $[1,\infty)$. Moreover,
\[
  \Delta_\varpi(a,u)=\log\log a+2\log\log\log a+O_u(1), \qquad a\to\infty.
\]
\end{lemma}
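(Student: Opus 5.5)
The plan is to compute an explicit antiderivative of the density of $\varpi$ on $[1,\infty)$, where $\abs{\log\rho}=\log\rho$. Substitute $t=\log\rho$, so that $\dif\rho/\rho=\dif t$, and then $w=\log(e+t)$, so that $\dif w=\dif t/(e+t)$. On $[a,ua]$ the variable $t$ runs over $[x,x+c]$, and
\[
  \varpi([a,ua])=\frac{e+1}{2}\int_{\log(e+x)}^{\log(e+x+c)}\frac{\dif w}{(e+w)^2}
  =\frac{e+1}{2}\roundb[\bigg]{\frac{1}{e+\log(e+x)}-\frac{1}{e+\log(e+x+c)}}.
\]
This gives the first display directly. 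As a sanity check on the normalisation, the same computation gives mass $\frac{e+1}{2}\cdot\frac{1}{e+1}=\frac12$ on $[1,\infty)$, and by symmetry $\rho\mapsto1/\rho$ also $\frac12$ on $(0,1]$.

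For monotonicity, write $F(x)=1/(e+\log(e+x))$, so that $\varpi([a,ua])=\frac{e+1}{2}(F(x)-F(x+c))$. It suffices to show this difference is nonincreasing in $x\ge0$. Its derivative is $F'(x)-F'(x+c)$, so the claim follows if $F'$ is nondecreasing, that is, if $F$ is convex. Compute
\[
  F'(x)=-\frac{1}{(e+x)(e+\log(e+x))^2}.
\]
The function $g(x)=(e+x)(e+\log(e+x))^2$ is positive and increasing, hence $-1/g$ is increasing. Therefore $F$ is convex and the mass $\varpi([a,ua])$ is nonincreasing in $a$. Since $\Delta_\varpi=-\log\varpi$, the penalty $\Delta_\varpi(a,u)$ is nondecreasing on $[1,\infty)$.

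For the asymptotics, apply the mean value theorem: $F(x)-F(x+c)=c\,\abs{F'(\xi)}$ for some $\xi\in[x,x+c]$. As $x\to\infty$ with $c$ fixed,
\[
  (e+\xi)(e+\log(e+\xi))^2=x(\log x)^2\,(1+o_u(1)).
\]
Hence $\varpi([a,ua])=\frac{(e+1)c}{2}\,x^{-1}(\log x)^{-2}(1+o(1))$. Taking $-\log$ gives
\[
  \Delta_\varpi(a,u)=\log x+2\log\log x-\log\frac{(e+1)\log u}{2}+o(1).
\]
With $x=\log a$, this is $\log\log a+2\log\log\log a+O_u(1)$.

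The only mildly delicate step is the uniform control of $\xi$ relative to $x$ in the asymptotics. This is immediate because $\xi-x\in[0,c]$ with $c$ fixed, so $\log(e+\xi)=\log x+o(1)$ and $(e+\xi)/x\to1$.
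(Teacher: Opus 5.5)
Your proof is correct and follows essentially the same route as the paper: the substitution $t=\log\rho$ (which you complete with the explicit antiderivative via $w=\log(e+t)$), monotonicity because the integrand $-F'(t)=1/\{(e+t)(e+\log(e+t))^2\}$ is nonincreasing (your convexity of $F$ is the same statement), and the asymptotics from uniform control of that integrand over $[x,x+c]$. The only cosmetic difference is that you phrase this last step through the mean value theorem rather than by integrating the uniform approximation.
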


\begin{proof}
By the change of variables $x=\log\rho$ and symmetry,
\[
  \int_0^\infty\frac{\dif\rho}{\rho(e+\abs{\log\rho})\{e+\log(e+\abs{\log\rho})\}^2}=2\int_0^\infty\frac{\dif x}{(e+x)\{e+\log(e+x)\}^2}=\frac{2}{e+1},
\]
which verifies the normalisation. For $a\geq1$, the same change of variables gives
\begin{align*}
  \varpi([a,ua])&=\frac{e+1}{2}\int_x^{x+c}\frac{\dif t}{(e+t)\{e+\log(e+t)\}^2}\\
  &=\frac{e+1}{2}\roundb[\bigg]{\frac{1}{e+\log(e+x)}-\frac{1}{e+\log(e+x+c)}}.
\end{align*}
The integrand is nonincreasing on $[0,\infty)$, so its integral over $[x,x+c]$ is nonincreasing in $x$. Since $x=\log a$ is increasing in $a$, the map $a\mapsto\Delta_\varpi(a,u)$ is nondecreasing.

For fixed $c>0$, uniformly over $t\in[x,x+c]$,
\[
  \frac{1}{(e+t)\{e+\log(e+t)\}^2}=\frac{1+o(1)}{x(\log x)^2}, \qquad x\to\infty.
\]
Consequently,
\[
  \varpi([a,ua])=\frac{(e+1)c}{2x(\log x)^2}\roundb{1+o(1)}.
\]
Taking negative logarithms and substituting $x=\log a$ and $c=\log u$ proves the stated asymptotic.
\end{proof}

\subsection{Laws of the iterated logarithm}

The logarithmic mixing law yields two laws of the iterated logarithm. The first is a regularised upper LIL for martingales in any separable Hilbert space, with a normalisation that adapts to the spectrum of $V_n$. In~finite dimensions, choosing the regularisation asymptotically negligible relative to $\lambda_{\min}(V_n)$ gives a spectrum-sensitive LIL for the unregularised self-normalised radius $\norm{V_n^{-1/2}M_n}$. In one dimension, this recovers the classical self-normalised upper LIL.

\begin{corollary}[restate=corRegularisedLIL,name=]\label{cor:regularised-lil}
Under the assumptions of \cref{thm:bennett-logdet-potential}, almost surely, every sequence $(a_n)_{n\geq1}$ with $a_n>e$ satisfying
\[
  a_n\to\infty, \qquad \log\det\roundb[\big]{I+a_n^{-1}V_n}=o(a_n)
\]
also satisfies
\[
  \limsup_{n\to\infty}\frac{R_{n,a_n}}{\sqrt{2\log\log a_n+\log\det\roundb[\big]{I+a_n^{-1}V_n}}}\leq1.
\]
\end{corollary}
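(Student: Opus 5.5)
We apply \cref{thm:mixed-rho-bennett} with the logarithmic mixing law $\varpi$ and a fixed $u>1$, and then let $u\downarrow1$ along a countable sequence. Fix $u>1$ and $\epsilon>0$. For $y_k=(1+\epsilon)\log k$, say, or more simply for each $y\geq0$, the theorem shows that with probability at least $1-e^{-y}$ the following holds for all $n\geq1$ and $a>0$:
\[
  ua\,h\roundb[\bigg]{\frac{R_{n,a}}{u\sqrt a}}\leq y+\ell_a(V_n)+\Delta_\varpi(a,u).
\]
Letting $y\to\infty$ through the integers, we get that almost surely there is a finite random $y$ such that this holds for all $n$ and $a$. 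Since the event is simultaneous over all $a$, the bound can be applied to any random sequence $a_n$. This is why the corollary is stated as ``almost surely, every sequence''. Intersecting over rational $u\downarrow1$ gives an almost sure event on which the bound holds for all $u$ in the countable set.

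Next we invert the Bennett rate. Set $x_n=\roundb{y+\ell_{a_n}(V_n)+\Delta_\varpi(a_n,u)}/(ua_n)$. By \cref{lem:logarithmic-mixing}, $\Delta_\varpi(a_n,u)=\log\log a_n+O(\log\log\log a_n)$ as $a_n\to\infty$, and by hypothesis $\ell_{a_n}(V_n)=o(a_n)$. Hence $x_n\to0$. Since $h(v)=\frac{v^2}{2}(1+O(v))$ as $v\downarrow0$, we have $h^{-1}(x)=\sqrt{2x}\,(1+o(1))$ as $x\downarrow0$. Therefore
\[
  R_{n,a_n}\leq u\sqrt{a_n}\,h^{-1}(x_n)=\sqrt{u}\,\sqrt{2\roundb{y+\ell_{a_n}(V_n)+\Delta_\varpi(a_n,u)}}\,(1+o(1)).
\]
Now substitute $2\ell_{a_n}(V_n)=\log\det(I+a_n^{-1}V_n)$ and $2\Delta_\varpi(a_n,u)=2\log\log a_n+o(\log\log a_n)$, where the $O_u(1)$ and $2\log\log\log$ terms are of smaller order. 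The constant $y$ is also negligible compared with $2\log\log a_n\to\infty$. Together these give
\[
  \limsup_{n\to\infty}\frac{R_{n,a_n}}{\sqrt{2\log\log a_n+\log\det(I+a_n^{-1}V_n)}}\leq\sqrt u.
\]
Letting $u\downarrow1$ along rationals completes the proof.

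The main obstacle is the uniformity of the remainder terms. The $o(1)$ in $h^{-1}(x)\sim\sqrt{2x}$ must be controlled using only $x_n\to0$, which holds along the given sequence. The error in $\Delta_\varpi$ must be dominated by $\log\log a_n$ alone, and not by the full denominator. Both are fine because the denominator is at least $2\log\log a_n$ and the errors are $o(\log\log a_n)$. The monotonicity of $\Delta_\varpi$ from \cref{lem:logarithmic-mixing} is not needed here, since $a_n\to\infty$ gives the asymptotic directly. The assumption $a_n>e$ guarantees that $\log\log a_n>0$, so the normalisation is well defined.
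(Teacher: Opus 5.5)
Your proof is correct and matches the paper's argument essentially step for step. You take a union over integer levels $y$ to get a finite random level, then use the interval-mass asymptotics and $\ell_{a_n}(V_n)=o(a_n)$ to send the argument of $h^{-1}$ to zero, invert via $h^{-1}(x)\sim\sqrt{2x}$ to obtain $\limsup\leq\sqrt u$, and let $u\downarrow1$. The only difference is cosmetic: you fix $u$ and intersect over rational $u$, which is harmless but unnecessary, because the mixed-regularisation theorem already holds simultaneously over all $u>1$.
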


Since \cref{thm:mixed-rho-bennett} holds simultaneously over $a>0$, the sequence $(a_n)$ may be random; in particular, it may be chosen as a measurable function of $(M_n,V_n)$.

\begin{proof}
For $k\in\Np$, let $\cE_k$ be the event that, simultaneously for all $n\geq1$, $a>0$ and $u>1$,
\[
  ua h\roundb[\bigg]{\frac{R_{n,a}}{u\sqrt a}}\leq k+\Delta_\varpi(a,u)+\ell_a(V_n).
\]
By \cref{thm:mixed-rho-bennett}, $\P{\cE_k}\geq1-e^{-k}$, and hence $\cE=\bigcup_{k\geq1}\cE_k$ has full probability. On $\cE$, there is a finite random integer $K$ for which the preceding inequality holds with $k=K$.

Let $(a_n)$ satisfy the assumptions of the corollary, fix $u>1$, and write
\[
  L_n=2\log\log a_n+2\ell_{a_n}(V_n)=2\log\log a_n+\log\det\roundb[\big]{I+a_n^{-1}V_n}.
\]
Then $L_n\to\infty$ and $L_n=o(a_n)$. By \cref{lem:logarithmic-mixing},
\[
  K+\Delta_\varpi(a_n,u)+\ell_{a_n}(V_n)=\frac12L_n+o(L_n),
\]
since $L_n\geq2\log\log a_n$ and $\log\log\log a_n=o(\log\log a_n)$. Substituting $a=a_n$ in the mixed inequality and inverting $h$ gives
\[
  R_{n,a_n}\leq u\sqrt{a_n}\,h^{-1}\roundb[\bigg]{\frac{L_n+o(L_n)}{2ua_n}}=\roundb[\big]{\sqrt u+o(1)}\sqrt{L_n},
\]
where the equality uses $L_n=o(a_n)$ and $h^{-1}(x)=\sqrt{2x}(1+o(1))$ as $x\downarrow0$. Thus the limsup is at most $\sqrt u$. Letting $u\downarrow1$ proves the claim.
\end{proof}

\begin{remark}[LIL under polynomial spectral decay]
  Suppose that, almost surely for all sufficiently large $n$, the eigenvalues of $V_n$ satisfy
\[
  \lambda_j(V_n)\leq A n j^{-\alpha}, \qquad j\geq1,
\]
for some $A>0$ and $\alpha>1$. Using $\int_0^\infty\log(1+x^{-\alpha})\,\dif x=\pi/\sin(\pi/\alpha)$, an integral comparison gives, almost surely for all sufficiently large $n$ and every $a>0$,
\[
  \ell_a(V_n)\leq\frac{\pi}{2\sin(\pi/\alpha)}\roundb{A n/a}^{1/\alpha}.
\]
Consequently, the regularised LIL (\cref{cor:regularised-lil}) applies whenever $a_n\to\infty$ and $a_n/n^{1/(\alpha+1)}\to\infty$. Taking $a_n=n^\beta$ with $1/(\alpha+1)<\beta<1$ gives
\[
  \limsup_{n\to\infty}\frac{R_{n,a_n}}{n^{\frac{1-\beta}{2\alpha}}}\leq\sqrt{\frac{\pi A^{1/\alpha}}{\sin(\pi/\alpha)}} \qquad\text{a.s.}
\]
The restriction $\beta<1$ ensures that $\log\log a_n=o\roundb{n^{(1-\beta)/\alpha}}$.
\end{remark}

In finite dimensions, taking $a_n=o(\lambda_{\min}(V_n))$ makes the regularised and unregularised self-normalised radii asymptotically equivalent.

\begin{corollary}[restate=corFiniteDimensionalLIL,name=]\label{cor:finite-dimensional-lil}
Assume $\cH=\Rd$. Then, almost surely on the event
\[
  \curlyb[\Big]{\lambda_{\min}(V_n)\to\infty,\ \log\det\roundb[\big]{\lambda_{\min}(V_n)^{-1}V_n}=o\roundb[\big]{\lambda_{\min}(V_n)}},
\]
one has
\[
  \limsup_{n\to\infty}\frac{\norm{V_n^{-1/2}M_n}}{\sqrt{2\log\log\lambda_{\min}(V_n)+\log\det\roundb[\big]{\lambda_{\min}(V_n)^{-1}V_n}}}\leq1.
\]
\end{corollary}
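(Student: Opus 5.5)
We deduce this from \cref{cor:regularised-lil} by choosing a regularisation sequence that is negligible relative to $\lambda_{\min}(V_n)$ and then comparing the regularised and unregularised radii. We work on the intersection of the event in the statement with the full-probability event on which \cref{cor:regularised-lil} holds simultaneously for all admissible sequences. This is legitimate because that corollary allows random sequences $(a_n)$ that are measurable functions of $V_n$. Write $\lambda_n=\lambda_{\min}(V_n)$ and $D_n=\log\det(\lambda_n^{-1}V_n)\geq0$. Since $D_n=o(\lambda_n)$, we can pick $\epsilon_n\downarrow0$ slowly enough that $D_n+2\log\log\lambda_n+d\log(1/\epsilon_n)=o(\epsilon_n\lambda_n)$. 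For example, take
\[
  \epsilon_n=\max\curlyb[\big]{(D_n/\lambda_n)^{1/2},\ \lambda_n^{-1/2}}
\]
for large $n$; then $\log(1/\epsilon_n)\leq\frac12\log\lambda_n$. Set $a_n=\epsilon_n\lambda_n$, so that $a_n\to\infty$ and $a_n>e$ eventually. For the finitely many early indices, redefine $a_n$ arbitrarily above $e$.

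Next we check the hypotheses and compare the normalisers. We have
\[
  \log\det(I+a_n^{-1}V_n)=\sum_j\log(1+\lambda_j/a_n)\leq\sum_j\log(2\lambda_j/a_n)=D_n+d\log(2/\epsilon_n),
\]
using $\lambda_j\geq\lambda_n\geq a_n$. Since $D_n+d\log(2/\epsilon_n)\leq D_n+O(\log\lambda_n)$, this is $o(a_n)$ by the choice of $\epsilon_n$. The term $d\log(1/\epsilon_n)$ is the main obstacle: it must be negligible against the target normaliser $2\log\log\lambda_n+D_n$, which may only be of order $\log\log\lambda_n$. So we need $\log(1/\epsilon_n)=o(\log\log\lambda_n)$, and we must pick $\epsilon_n$ more carefully:
\[
  \epsilon_n=\max\curlyb[\big]{(D_n/\lambda_n)^{1/2},\ (\log\log\lambda_n)^{-1}\ \text{(or any slowly decaying choice)}}.
\]
Then $\log(1/\epsilon_n)$ involves either $O(\log\log\log\lambda_n)$, or the case $\epsilon_n=(D_n/\lambda_n)^{1/2}$. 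In the latter case $a_n=\sqrt{D_n\lambda_n}$ and $D_n/a_n=\sqrt{D_n/\lambda_n}\to0$. There we bound $\log(1+\lambda_j/a_n)\leq\log(\lambda_j/\lambda_n)+\log(1+\lambda_n/a_n)$, and the danger is $d\log(\lambda_n/a_n)=\frac d2\log(\lambda_n/D_n)$. To avoid this, it is cleaner to always take $\epsilon_n=1/\log\log\lambda_n$, provided $D_n/(\lambda_n/\log\log\lambda_n)\to0$. That condition fails only if $D_n$ is close to order $\lambda_n$, and then $D_n$ dominates the normaliser anyway. In that regime we instead take $\epsilon_n=(D_n/\lambda_n)^{1/2}$ and absorb $d\log(1/\epsilon_n)\leq\frac d2\log\lambda_n$. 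This is $o(D_n)$, because $D_n\geq\lambda_n/\log\log\lambda_n\gg\log\lambda_n$. In either regime we get $\log\det(I+a_n^{-1}V_n)\leq D_n+o(\log\log\lambda_n+D_n)$. Also $\log\log a_n=\log\log\lambda_n+o(1)$ because $\log(1/\epsilon_n)=o(\log\lambda_n)$. Hence
\[
  2\log\log a_n+\log\det(I+a_n^{-1}V_n)\leq(1+o(1))\bigl(2\log\log\lambda_n+D_n\bigr).
\]

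Finally, we compare the radii. Since $V_n+a_nI\preceq(1+\epsilon_n)V_n$ (because $a_nI\preceq\epsilon_nV_n$), operator monotonicity of inversion gives
\[
  \norm{V_n^{-1/2}M_n}^2\leq(1+\epsilon_n)\norm{(V_n+a_nI)^{-1/2}M_n}^2=(1+o(1))R_{n,a_n}^2.
\]
Combining this with \cref{cor:regularised-lil} and the normaliser comparison above gives a limsup of at most $1$.
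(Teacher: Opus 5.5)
Your route is the paper's: regularise at $a_n=\epsilon_n\lambda_n$ with $\epsilon_n\to0$, apply \cref{cor:regularised-lil}, and transfer through $V_n+a_nI\preceq(1+\epsilon_n)V_n$. The bound $\log\det(I+a_n^{-1}V_n)\le D_n+d\log(2/\epsilon_n)$ and the radius comparison are correct. The gap is in your final choice of $\epsilon_n$. Regime A ($\epsilon_n=1/\log\log\lambda_n$) is justified only under $D_n\log\log\lambda_n/\lambda_n\to0$, while regime B assumes $D_n\ge\lambda_n/\log\log\lambda_n$, and together these do not cover the event.

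If the split is made index by index at the threshold $\lambda_n/\log\log\lambda_n$, consider an index with $D_n=\lambda_n/(2\log\log\lambda_n)$, which is compatible with $D_n=o(\lambda_n)$ once $d\ge2$. It falls in regime A, where $a_n=\lambda_n/\log\log\lambda_n$ and $\log\det(I+a_n^{-1}V_n)\ge D_n=a_n/2$. If this happens infinitely often, the hypothesis $\log\det(I+a_n^{-1}V_n)=o(a_n)$ of \cref{cor:regularised-lil} fails. Without it, $h^{-1}$ cannot be linearised, and the available bound exceeds the target by a constant factor. If instead the split is made for the whole path, failure of $D_n\log\log\lambda_n/\lambda_n\to0$ does not give $D_n\ge\lambda_n/\log\log\lambda_n$ eventually. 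At indices where $D_n$ is small, the choice $\epsilon_n=\sqrt{D_n/\lambda_n}$ then makes $d\log(1/\epsilon_n)$ of order $\log\lambda_n$, which is not negligible, and $a_n$ degenerates when $D_n=0$.

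The repair is the formula you abandoned, $\epsilon_n=\max\{\sqrt{D_n/\lambda_n},\,1/\log\log\lambda_n\}$. Then $\log(1/\epsilon_n)=\min\{\frac12\log(\lambda_n/D_n),\,\log\log\log\lambda_n\}\le\log\log\log\lambda_n$. So the term $\frac d2\log(\lambda_n/D_n)$ you were worried about only matters when it is at most $d\log\log\log\lambda_n=o(\log\log\lambda_n)$. Moreover $D_n/a_n\le\sqrt{D_n/\lambda_n}\to0$ and $d\log(2/\epsilon_n)/a_n\to0$, so all hypotheses hold without any case split. Separating the thresholds also works, for instance using regime A only when $D_n\le\lambda_n/(\log\log\lambda_n)^2$.

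This single maximum has the same structure as the paper's choice $a_n=\lambda_n/r_n$ with $r_n=\min\{\sqrt{\lambda_n/L_n},\,e^{\sqrt{L_n}}\}$, where $L_n=2\log\log\lambda_n+D_n$. The paper controls $d\log r_n\le d\sqrt{L_n}=o(L_n)$ against the whole normaliser, rather than against its two summands separately. Finally, for the normaliser comparison you only need $\log\log a_n\le\log\log\lambda_n$, which is immediate from $a_n\le\lambda_n$.
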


The passage from \cref{cor:regularised-lil} to this result uses the smallest eigenvalue only to choose $a_n=o(\lambda_{\min}(V_n))$ and compare the regularised radius with $\norm{V_n^{-1/2}M_n}$. If $V\in\ptrclass$ acts on an infinite-dimensional Hilbert space, then $V\succeq \epsilon I$ cannot hold for any $\epsilon>0$, so the same conversion from the regularised to the unregularised radius is unavailable.

\begin{proof}
Work on a sample path on which \cref{cor:regularised-lil} holds and the event in the statement occurs. Write
\[
  \gamma_n=\lambda_{\min}(V_n), \qquad L_n=2\log\log\gamma_n+\log\det(\gamma_n^{-1}V_n).
\]
Then $L_n\to\infty$ and $L_n=o(\gamma_n)$. Define
\[
  r_n=\min\curlyb[\Big]{\sqrt{\frac{\gamma_n}{L_n}},\,\exp\curlyb{\sqrt{L_n}}}, \qquad a_n=\frac{\gamma_n}{r_n}.
\]
Both terms in the minimum defining $r_n$ tend to infinity, so $r_n\to\infty$ and $a_n/\gamma_n\to0$. Moreover,
\[
  \frac{L_n}{a_n}=\frac{r_nL_n}{\gamma_n}\leq\sqrt{\frac{L_n}{\gamma_n}}\to0, \qquad \log r_n\leq\sqrt{L_n}=o(L_n).
\]
The choice of $r_n$ also gives $a_n\geq\sqrt{\gamma_nL_n}$, and hence $\sqrt{\gamma_n}\leq a_n\leq\gamma_n$ eventually.

Let $\lambda_1(V_n),\dots,\lambda_d(V_n)$ be the eigenvalues of $V_n$. Since $\lambda_j(V_n)/\gamma_n\geq1$ and $r_n\to\infty$,
\[
  2\ell_{a_n}(V_n)=\sum_{j=1}^d\log\roundb[\bigg]{1+r_n\frac{\lambda_j(V_n)}{\gamma_n}}=\log\det(\gamma_n^{-1}V_n)+d\log r_n+o(1).
\]
Also, $\sqrt{\gamma_n}\leq a_n\leq\gamma_n$ gives $\log\log a_n=\log\log\gamma_n+O(1)$. It follows that
\[
  2\log\log a_n+2\ell_{a_n}(V_n)=L_n+o(L_n), \qquad \ell_{a_n}(V_n)=o(a_n).
\]
Applying \cref{cor:regularised-lil} gives
\[
  \limsup_{n\to\infty}\frac{R_{n,a_n}}{\sqrt{L_n}}\leq1.
\]

Finally, $V_n\succeq\gamma_nI$ implies $V_n+a_nI\preceq\roundb[\big]{1+\frac{a_n}{\gamma_n}}V_n$, and hence
\[
  \norm{V_n^{-1/2}M_n}\leq\sqrt{1+\frac{a_n}{\gamma_n}}\,R_{n,a_n}.
\]
Since $a_n/\gamma_n\to0$, the result follows.
\end{proof}

\begin{remark}[Scalar case and sharpness]\label{rem:scalar-lil-calibration}
When $d=1$, the determinant term vanishes and \cref{cor:finite-dimensional-lil} gives
\[
  \limsup_{n\to\infty}\frac{\abs{M_n}}{\sqrt{2V_n\log\log V_n}}\leq1 \qquad\text{a.s. on }\curlyb{V_n\to\infty}.
\]
This upper bound also follows from \citet[Lem.~1.6 and Cor.~4.2]{delapena2004selfnormalized}; their lemma permits a conditional Bernstein moment condition. For the simple symmetric random walk, the limsup in the display is equal to $1$ almost surely.
\end{remark}

\begin{remark}[Condition-number LIL comparison]\label{rem:whitehouse-lil-comparison}
Recall that $\kappa(V_n)=\lambda_{\max}(V_n)/\lambda_{\min}(V_n)$. The finite-dimensional LIL of \citet[Cor.~4.5]{whitehouse2026timeuniform} uses the normalising term
\[
  \sqrt{2\log\log\lambda_{\max}(V_n)+d\log\kappa(V_n)}.
\]
The normalising term in \cref{cor:finite-dimensional-lil} is no larger, since
\[
  2\log\log\lambda_{\min}(V_n)+\log\det\roundb[\big]{\lambda_{\min}(V_n)^{-1}V_n}\leq2\log\log\lambda_{\max}(V_n)+ (d-1)\log\kappa(V_n).
\]
The inequality is strict whenever $V_n$ is not a scalar multiple of the identity.
\end{remark}

\subsection{Finite-time spectrum-sensitive bounds}\label{sec:condition-num}

Regularising at a multiple of $\lambda_{\min}(V_n)$ also gives a finite-time bound for the unregularised self-normalised radius. The bound retains the full normalised spectrum of $V_n$; relaxing the log-determinant term yields a condition-number inequality in the style of \citet{whitehouse2026timeuniform}.

\begin{corollary}[name=]\label{cor:condition-number-bound}
Let $\pi$ be a Borel probability measure on $\Rpp$ and assume $\cH=\Rd$. For every $y\geq0$, with probability at least $1-e^{-y}$, simultaneously for all $n\geq1$ with $\lambda_{\min}(V_n)>0$, $r>0$ and $u>1$, writing
\[
  L_{n,r,u}=y+\Delta_\pi(r\lambda_{\min}(V_n),u)+\frac12\log\det\roundb[\big]{I+(r\lambda_{\min}(V_n))^{-1}V_n},
\]
one has
\[
  \norm{V_n^{-1/2}M_n}\leq\sqrt{1+r}\roundb[\bigg]{\sqrt{2uL_{n,r,u}}+\frac{L_{n,r,u}}{3\sqrt{r\lambda_{\min}(V_n)}}}.
\]
\end{corollary}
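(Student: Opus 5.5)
We will derive this from \cref{thm:mixed-rho-bennett}: work on the complement of its exceptional event, which has probability at least $1-e^{-y}$. On that event the following holds simultaneously for all $n\geq1$, $a>0$ and $u>1$:
\[
  ua\,h\roundb[\big]{R_{n,a}/(u\sqrt a)}\leq y+\ell_a(V_n)+\Delta_\pi(a,u).
\]
Because this is simultaneous over $a$, we may substitute the random, $n$-dependent value $a=r\lambda_{\min}(V_n)$ for any $r>0$ and any $n$ with $\lambda_{\min}(V_n)>0$. The right-hand side then becomes exactly $L_{n,r,u}$, since $\ell_a(V_n)=\frac12\log\det(I+a^{-1}V_n)$. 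If $\Delta_\pi=\infty$ the claim is vacuous, so assume it is finite.

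The next step is the Bernstein relaxation, as in \cref{cor:bernstein}. Since $b\leq h$, we get $ua\,b(R_{n,a}/(u\sqrt a))\leq L_{n,r,u}$. Inverting with $b^{-1}(x)=\sqrt{2x}+x/3$ gives
\[
  R_{n,a}\leq u\sqrt a\roundb[\Big]{\sqrt{2L/(ua)}+\frac{L}{3ua}}=\sqrt{2uL}+\frac{L}{3\sqrt a},
\]
where $L=L_{n,r,u}$. Substituting $\sqrt a=\sqrt{r\lambda_{\min}(V_n)}$ gives the bracketed expression in the statement.

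It remains to pass from the regularised radius to the unregularised one, as in the last step of the proof of \cref{cor:finite-dimensional-lil}. Write $\gamma=\lambda_{\min}(V_n)>0$. Then $V_n\succeq\gamma I$, so $aI=r\gamma I\preceq rV_n$ and hence $V_n+aI\preceq(1+r)V_n$. Operator monotonicity of inversion for positive definite matrices gives $V_n^{-1}\preceq(1+r)(V_n+aI)^{-1}$. Therefore
\[
  \norm{V_n^{-1/2}M_n}^2=\langle M_n,V_n^{-1}M_n\rangle\leq(1+r)R_{n,a}^2.
\]
Taking square roots and combining with the previous display yields the claim.

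The argument has no real obstacle. The one point needing care is that the choice $a=r\lambda_{\min}(V_n)$ is random and depends on $n$. This is legitimate precisely because \cref{thm:mixed-rho-bennett} controls all $a$ at once on a single event, rather than holding for each fixed $a$ separately.
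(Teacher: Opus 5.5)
Your proposal is correct and follows the paper's proof: you apply \cref{thm:mixed-rho-bennett} on its good event with the random choice $a=r\lambda_{\min}(V_n)$, invert the Bennett bound via the Bernstein relaxation, and use $V_n+r\lambda_{\min}(V_n)I\preceq(1+r)V_n$ to pass to $\norm{V_n^{-1/2}M_n}$. The only cosmetic difference is that you invert via $b\le h$ and the exact $b^{-1}$, whereas the paper uses the equivalent bound $h^{-1}(x)\le\sqrt{2x}+x/3$ directly.
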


\begin{proof}
Since $\lambda_{\min}(V_n) I\preceq V_n$, one has $V_n+r\lambda_{\min}(V_n) I\preceq(1+r)V_n$, and hence
\[
  \norm{V_n^{-1/2}M_n}\leq\sqrt{1+r}\,\norm{(V_n+r\lambda_{\min}(V_n) I)^{-1/2}M_n}.
\]
Fix $y\geq0$. By \cref{thm:mixed-rho-bennett}, there is an event $\cE$ of probability at least $1-e^{-y}$ on which, simultaneously for all $n\geq1$, $a>0$ and $u>1$,
\[
  ua h\roundb[\bigg]{\frac{R_{n,a}}{u\sqrt a}}\leq y+\ell_a(V_n)+\Delta_\pi(a,u).
\]
Work on $\cE$, fix $n\geq1$ such that $\lambda_{\min}(V_n)>0$, and fix $r>0$ and $u>1$. Taking $a=r\lambda_{\min}(V_n)$ and using $h^{-1}(x)\leq\sqrt{2x}+x/3$ gives
\[
  \norm{(V_n+r\lambda_{\min}(V_n) I)^{-1/2}M_n}=R_{n,r\lambda_{\min}(V_n)}\leq\sqrt{2uL_{n,r,u}}+\frac{L_{n,r,u}}{3\sqrt{r\lambda_{\min}(V_n)}}.
\]
Since $n$, $r$ and $u$ were arbitrary, the inequality holds simultaneously on $\cE$.
\end{proof}

The following remark first instantiates the condition-number inequality of \citet{whitehouse2026timeuniform} in the bounded-increment setting, and then relaxes the spectrum-sensitive terms in \cref{cor:condition-number-bound} to recover a condition-number bound of similar form.

\begin{remark}[Comparison with condition-number bounds]\label{rem:whitehouse-condition-number}
Fix $\rho_0>0$ and $y\geq0$, and, for a positive-definite matrix $A$, write $\kappa(A)=\lambda_{\max}(A)/\lambda_{\min}(A)$. Define
\[
  L_n^\kappa=y+2\log\log\roundb[\bigg]{\frac{e\lambda_{\max}(V_n\vee\rho_0I)}{\rho_0}}+\frac d2\log\kappa(V_n\vee\rho_0I)+C_d,
\]
where $\vee$ is taken spectrally and $C_d>0$ depends only on $d$. With simple parameter choices,\footnotemark{} \citet[Thm.~4.1]{whitehouse2026timeuniform} give, in the bounded-increment setting considered here, with probability at least $1-e^{-y}$, simultaneously for all $n\geq1$,
\[
  \norm{(V_n\vee\rho_0I)^{-1/2}M_n}\leq2\sqrt{2eL_n^\kappa}+\frac{2eL_n^\kappa}{3\sqrt{\lambda_{\min}(V_n\vee\rho_0I)}}.
\]
When $\lambda_{\min}(V_n)\geq\rho_0$, one has $V_n\vee\rho_0I=V_n$, so this controls $\norm{V_n^{-1/2}M_n}$. \cref{cor:condition-number-bound} controls the same radius at every positive-definite time. To compare the corresponding $L_{n,1,u}$ with $L_n^\kappa$, take $\pi$ to be the image of $\varpi$ under the map $\rho\mapsto\rho_0\rho$, fix $u>1$, and consider in turn the log-determinant and interval-mass terms.

\footnotetext{Set their $\rho=\rho_0$, $\delta=e^{-y}$, $\alpha=e$, $\beta=2$, and $\epsilon=1/2$, take their stitching function to be $k\mapsto\zeta(2)(k+1)^2$, and specialise to the sub-gamma scale $1/3$. Bound the minimum cardinality of the Euclidean $\eta$-cover of the unit sphere in $\Rd$ required by $c_d(3/\eta)^{d-1}$, where $c_d>0$ only depends on $d$.}

The log-determinant contribution to $L_{n,1,u}$ is
\[
  \frac12\log\det\roundb[\big]{I+\lambda_{\min}(V_n)^{-1}V_n}=\frac12\sum_{j=1}^d\log\roundb[\bigg]{1+\frac{\lambda_j(V_n)}{\lambda_{\min}(V_n)}}.
\]
This term retains the full normalised spectrum of $V_n$, and may therefore be substantially smaller than the condition-number relaxation
\[
  \frac12\log\det\roundb[\big]{I+\lambda_{\min}(V_n)^{-1}V_n}\leq\frac{d-1}{2}\log\kappa(V_n)+\frac d2\log2.
\]
After this relaxation, the resulting $\log\kappa(V_n)$ dependence is of the same form as that in $L_n^\kappa$.

The remaining covariance-dependent contribution to $L_{n,1,u}$ is the interval-mass penalty. By the exact formula in \cref{lem:logarithmic-mixing}, there is a constant $C_u>0$ such that, whenever $\lambda_{\min}(V_n)\geq\rho_0$,
\[
  \Delta_\pi(\lambda_{\min}(V_n),u)=\Delta_\varpi(\lambda_{\min}(V_n)/\rho_0,u)\leq2\log\log\roundb[\bigg]{\frac{e\lambda_{\min}(V_n)}{\rho_0}}+C_u.
\]
Thus, whereas the iterated-logarithm term in $L_n^\kappa$ is evaluated at $\lambda_{\max}(V_n)$, the corresponding term in $L_{n,1,u}$ is evaluated at the smaller spectral scale $\lambda_{\min}(V_n)$.

Relaxing the log-determinant term therefore recovers a condition-number bound of similar form. The unrelaxed bound in \cref{cor:condition-number-bound} retains the full normalised eigenvalue profile of $V_n$.
\end{remark}

\subsection{Bernstein inequality for the martingale norm}\label{sec:ordinary-norm-bounds}

Finally, we use the mixed bound to derive a time-uniform Bernstein inequality for $\norm{M_n}$ in terms of $\tr(V_n)$ and $\norm{V_n}_{\op}$. The result is in the style of dimension-free norm inequalities for martingales in smooth Banach spaces \citep{pinelis1994optimum}; its trace and operator-norm dependence is analogous to that of the finite-dimensional sub-Gaussian confidence spheres of \citet{chugg2025confidencespheres}.

For $y\geq0$, $n\geq1$ and $u>1$, define
\[
  L_{n,u}=1\vee\curlyb[\big]{y+\Delta_\varpi(1\vee\tr(V_n),u)}.
\]

\begin{proposition}\label{prop:covariance-sensitive-bernstein}
Under the assumptions of \cref{thm:mixed-rho-bennett}, for every $y\geq0$, with probability at least $1-e^{-y}$, simultaneously for all $n\geq1$,
\[
  \norm{M_n}\leq\inf_{u>1}\curlyb[\bigg]{\sqrt{4u\roundb[\big]{\tr(V_n)\vee\curlyb[\big]{\roundb{1\vee\norm{V_n}_{\op}}L_{n,u}}}+2u\tr(V_n)}+\frac{L_{n,u}}{\sqrt2}}.
\]
\end{proposition}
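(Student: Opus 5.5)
We apply \cref{thm:mixed-rho-bennett} with $\pi=\varpi$ and a data-dependent regularisation scale
\[
  a_n=\roundb{1\vee\norm{V_n}_{\op}}\vee\frac{\tr(V_n)}{L_{n,u}},
\]
one scale for each $u>1$. Since \cref{thm:mixed-rho-bennett} holds simultaneously over all $n$, $a>0$ and $u>1$, a single event $\cE$ of probability at least $1-e^{-y}$ suffices. We then take the infimum over $u$ at the end.

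\emph{Step 1 (the scale is admissible).} Since $L_{n,u}\ge1$ and $\norm{V_n}_{\op}\le\tr(V_n)$, we have $1\le a_n\le 1\vee\tr(V_n)$. By the monotonicity in \cref{lem:logarithmic-mixing}, this gives $\Delta_\varpi(a_n,u)\le\Delta_\varpi(1\vee\tr(V_n),u)$. Hence $y+\Delta_\varpi(a_n,u)\le L_{n,u}$.

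\emph{Step 2 (log-determinant term).} Using $\log(1+x)\le x$ eigenvalue-wise, $\ell_{a_n}(V_n)\le\tr(V_n)/(2a_n)\le L_{n,u}/2$, the last step because $a_n\ge\tr(V_n)/L_{n,u}$. Write $L'=y+\Delta_\varpi(a_n,u)+\ell_{a_n}(V_n)$. Then
\[
  L'\le L_{n,u}+\frac{\tr(V_n)}{2a_n}\le\frac32L_{n,u}.
\]

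\emph{Step 3 (inverting $h$ and passing to the norm).} On $\cE$, inverting $h$ and using $h^{-1}(x)\le\sqrt{2x}+x/3$ gives
\[
  R_{n,a_n}\le u\sqrt{a_n}\,h^{-1}\roundb[\big]{L'/(ua_n)}\le\sqrt{2uL'}+\frac{L'}{3\sqrt{a_n}},
\]
where the factor $u\ge1$ has been dropped from the linear term. Since $\norm{M_n}\le\norm{(V_n+a_nI)^{1/2}}_{\op}R_{n,a_n}=\sqrt{\norm{V_n}_{\op}+a_n}\,R_{n,a_n}$ and $\norm{V_n}_{\op}\le a_n$, we get
\[
  \norm{M_n}\le\sqrt{2u(\norm{V_n}_{\op}+a_n)L'}+\frac{\sqrt{2}}{3}L'.
\]
The second term is at most $\frac{\sqrt2}{3}\cdot\frac32L_{n,u}=L_{n,u}/\sqrt2$. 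For the first term, expand $L'\le L_{n,u}+\tr(V_n)/(2a_n)$. This gives
\[
  2u(\norm{V_n}_{\op}+a_n)L_{n,u}\le4ua_nL_{n,u}=4u\curlyb[\big]{\roundb{1\vee\norm{V_n}_{\op}}L_{n,u}\vee\tr(V_n)},
\]
together with
\[
  u(\norm{V_n}_{\op}+a_n)\frac{\tr(V_n)}{a_n}\le2u\tr(V_n).
\]
Their sum is exactly the expression under the square root in the statement. Taking the infimum over $u>1$ completes the argument.

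\emph{Main obstacle.} The delicate step is the choice of $a_n$. It must satisfy three constraints at once. It must be at most $1\vee\tr(V_n)$, so that the interval-mass penalty is controlled by the stated $\Delta_\varpi(1\vee\tr(V_n),u)$. It must be at least $\norm{V_n}_{\op}$, so that the factor $\sqrt{\norm{V_n}_{\op}+a_n}$ costs only a factor $\sqrt{2a_n}$. It must be at least $\tr(V_n)/L_{n,u}$, so that the crude bound $\ell_a\le\tr/(2a)$ does not inflate the linear Bernstein term. The remaining steps are bookkeeping; the one point to check is that $L_{n,u}\ge1$ makes these constraints compatible.
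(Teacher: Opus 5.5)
Your proof is correct and follows the paper's argument essentially line by line. You use the same choice $a=(1\vee\norm{V_n}_{\op})\vee\tr(V_n)/L_{n,u}$, the monotonicity of $\Delta_\varpi$, the bounds $\ell_a(V_n)\le\tr(V_n)/(2a)$ and $\norm{V_n}_{\op}+a\le2a$, and the inversion $h^{-1}(x)\le\sqrt{2x}+x/3$. One small wording point: in Step 3 the factor $u$ cancels exactly in the linear term, since $u\sqrt{a}\cdot L'/(3ua)=L'/(3\sqrt a)$, so nothing is actually dropped.
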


The scalar-total-variance inequalities of \citet{pinelis1994optimum} and their time-uniform extensions in \citet{howard2020timeuniform} yield stitched boundaries of order
\[
  \sqrt{\tr(V_n)\curlyb[\big]{y+\log\log\tr(V_n)}}+y+\log\log\tr(V_n).
\]
By contrast, the preceding proposition has order
\[
  \sqrt{\tr(V_n)}+\sqrt{\roundb[\big]{1\vee\norm{V_n}_{\op}}\curlyb[\big]{y+\log\log\tr(V_n)}}+y+\log\log\tr(V_n),
\]
up to lower-order iterated logarithms and universal constants. It can therefore be smaller by an iterated-logarithmic factor when $V_n$ has effective rank substantially larger than the confidence penalty, while giving the same order in rank-one regimes.

\begin{proof}
By \cref{thm:mixed-rho-bennett}, with probability at least $1-e^{-y}$, simultaneously for all $n\geq1$, $a>0$ and $u>1$,
\[
  ua h\roundb[\bigg]{\frac{R_{n,a}}{u\sqrt a}}\leq y+\Delta_\varpi(a,u)+\ell_a(V_n).
\]
Work on this event and fix $n\geq1$ and $u>1$. For any $a>0$, $V_n \preceq\norm{V_n}_{\op}I$ gives \[\norm{M_n}\leq\sqrt{\norm{V_n}_{\op}+a}\,R_{n,a}.\] Inverting $h$ in the mixed inequality and using $h^{-1}(x)\leq\sqrt{2x}+x/3$, we obtain
\begin{align*}
  \norm{M_n}
  &\leq\sqrt{2u\roundb[\big]{\norm{V_n}_{\op}+a}\curlyb[\big]{y+\Delta_\varpi(a,u)+\ell_a(V_n)}}\\
  &\qquad+\frac{\sqrt{\norm{V_n}_{\op}+a}}{3\sqrt a}\curlyb[\big]{y+\Delta_\varpi(a,u)+\ell_a(V_n)}.
\end{align*}

We now choose
\[
  a=\roundb[\big]{1\vee\norm{V_n}_{\op}}\vee\frac{\tr(V_n)}{L_{n,u}}.
\]
The two terms in this choice ensure that $a\geq1\vee\norm{V_n}_{\op}$ and $\tr(V_n)/a\leq L_{n,u}$, respectively. Since $L_{n,u}\geq1$ and $\norm{V_n}_{\op}\leq\tr(V_n)$, one also has $1\leq a\leq1\vee\tr(V_n)$. The monotonicity of the interval-mass penalty therefore gives $y+\Delta_\varpi(a,u)\leq L_{n,u}$. Using $\ell_a(V_n)\leq\tr(V_n)/(2a)$, together with $\norm{V_n}_{\op}+a\leq2a$, the preceding norm bound becomes
\begin{align*}
  \norm{M_n}
  &\leq\sqrt{4ua\roundb[\bigg]{L_{n,u}+\frac{\tr(V_n)}{2a}}}+\frac{\sqrt2}{3}\roundb[\bigg]{L_{n,u}+\frac{\tr(V_n)}{2a}}\\
  &\leq\sqrt{4uaL_{n,u}+2u\tr(V_n)}+\frac{L_{n,u}}{\sqrt2}.
\end{align*}
Finally, $aL_{n,u}=\tr(V_n)\vee\curlyb[\big]{\roundb{1\vee\norm{V_n}_{\op}}L_{n,u}}$. Substituting this identity and taking the infimum over $u>1$ proves the result.
\end{proof}

\section{Proofs of the supermartingale results} We first prove the discrete-time supermartingale theorem through a one-step inequality. The continuous-time proof then obtains the required compensator drift inequality as an infinitesimal consequence of the same estimate.

\subsection{Discrete-time Bennett potential (\cref{thm:bennett-logdet-potential})}
\label{sec:proof-bennett-logdet-potential}

Fix $\rho>0$. Since $U_\rho^h\ge0$ and $U_\rho^h(M_0,V_0)=U_\rho^h(0,0)=1$, it suffices to prove that, for every $n\ge1$,
\[
  \E[U_\rho^h(M_n,V_n)\mid\cF_{n-1}]
  \le
  U_\rho^h(M_{n-1},V_{n-1}).
\]
For $n\ge1$, write $W_n=\E[X_n\otimes X_n\mid\cF_{n-1}]$, so that $M_n=M_{n-1}+X_n$ and $V_n=V_{n-1}+W_n$. Conditionally on $\cF_{n-1}$, the required inequality follows from the following one-step estimate: if $X$ is a centred $\cH$-valued random variable with $\norm{X}\le1$ a.s. and $W=\E[X\otimes X]$, then, for every $m\in\cH$ and $G\in\ptrclass$,
\begin{equation}\label{eq:one-step}
  \E\,U_\rho^h(m+X,G+W)\le U_\rho^h(m,G).\tag{$\dagger$}
\end{equation}
We prove this estimate. Define
\[
  F(x)=\exp\{\rho h(\norm{x})\},\qquad x\in\cH.
\]
We first establish the following pointwise inequality.

\begin{lemma}[restate=bennettTangent,name=]\label{lem:bennett-tangent}
For every $x,y\in\cH$ with $\norm{y}\le\rho^{-1}$,
\[
  F(x+y)
  \le
  \roundb[\Big]{1+\frac{\rho}{2}\norm{y}^2} F(x)
  +
  \roundb[\Big]{1+\frac{\rho}{2}\langle x,y\rangle} \langle \nabla F(x),y\rangle.
\]
\end{lemma}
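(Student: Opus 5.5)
The plan is to reduce the inequality to a scalar inequality in two variables and then prove that inequality by one-variable calculus and convexity of $h$.

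\textbf{Reduction.} First divide by $F(x)>0$. The case $x=0$ is separate because $\nabla F(0)=0$ (since $h'(0)=0$). There the claim reads $\rho h(\norm{y})\le\log(1+\tfrac{\rho}{2}\norm{y}^2)$ for $\rho\norm{y}\le1$. I would check this from $h(s)\le s^2/2-s^3/6+s^4/12$ together with $\log(1+z)\ge z-z^2/2$, using $\rho s\le1$ to absorb the quartic terms.

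For $x\ne0$, write $r=\norm{x}$, $t=\langle x,y\rangle/r$ and $\sigma^2=\norm{y}^2-t^2\ge0$. Then $\norm{x+y}^2=(r+t)^2+\sigma^2$. Since $h'(u)=\log(1+u)$, we have $\nabla F(x)=\rho F(x)\log(1+r)\,x/r$. The claim becomes
\[
  \exp\curlyb[\big]{\rho\,[h(\sqrt{(r+t)^2+\sigma^2})-h(r)]}
  \le 1+\frac{\rho}{2}(t^2+\sigma^2)+\rho\roundb[\Big]{1+\frac{\rho rt}{2}}t\log(1+r),
\]
subject to $t^2+\sigma^2\le\rho^{-2}$. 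The right side is affine in $q=\sigma^2$.

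\textbf{Removing the transverse component.} I would show that the left side, as a function of $q$, has $q$-derivative at most $\rho/2$ times something that can be controlled. The derivative of $q\mapsto\exp\{\rho h(\sqrt{(r+t)^2+q})\}$ is
\[
  \frac{\rho\log(1+a)}{2a}\,e^{\rho h(a)},\qquad a=\sqrt{(r+t)^2+q}.
\]
So the comparison is against $\tfrac{\rho}{2}e^{\rho h(r)}$ after multiplying back by $F(x)$. Because $\log(1+a)/a\le1$, this reduces to bounding $e^{\rho(h(a)-h(r))}$. The intended route is convexity of $h$ together with the constraint $\rho\norm{y}\le1$, which gives
\[
  \rho\,(h(a)-h(r))\le\rho\,(a-r)\log(1+a)\le\rho\norm{y}\log(1+a).
\]
If the transverse derivative cannot be dominated uniformly, I would fall back to treating the general case directly via convexity of $q\mapsto e^{\rho h(\sqrt{\cdot})}$ between the endpoints $q=0$ and $q=\rho^{-2}-t^2$. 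That would reduce everything to the one-dimensional collinear case plus an endpoint check.

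\textbf{Collinear case and main obstacle.} With $\sigma=0$ and $\abs{t}\le1/\rho$, set $\phi(t)=\rho[h(\abs{r+t})-h(r)]$. The task is to prove
\[
  e^{\phi(t)}\le1+\frac{\rho t^2}{2}+\rho t\log(1+r)+\frac{\rho^2rt^2}{2}\log(1+r).
\]
Both sides agree to first order at $t=0$: each has value $1$ and derivative $\rho\log(1+r)$. The plan is to compare second derivatives on $[-1/\rho,1/\rho]$ via Taylor's theorem with integral remainder. On the left,
\[
  \frac{d^2}{dt^2}e^{\phi}=e^{\phi}\roundb[\Big]{\rho^2\log^2(1+r+t)+\frac{\rho}{1+r+t}},
\]
while the right side has constant second derivative $\rho+\rho^2r\log(1+r)$.

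This collinear comparison is the main obstacle. The term $\rho^2\log^2$ must be matched by $\rho^2r\log(1+r)$ using $\log(1+r)\le r$. The factor $e^{\phi}$, together with the $t<0$ branch where $r+t$ may become negative, must be controlled using $\rho\abs{t}\le1$. I would first try integrating twice and bounding $e^{\phi(t)}$ through $\phi(t)\le\rho\abs{t}\log(1+r+\abs{t})$. If that is too lossy, I would instead prove $\log(\mathrm{RHS})-\phi$ is minimised at $t=0$ by monotonicity of its derivative on each side of $0$.
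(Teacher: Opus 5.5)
Your reduction to the scalar variables $r=\|x\|$, $t$ and $\sigma^2$ matches the paper's first step. The mechanisms you propose after that fail, however, and the hardest case is not addressed. Write $a=\sqrt{(r+t)^2+\sigma^2}=\|x+y\|$ and $\ell=\log(1+r)$.

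\emph{Transverse reduction.} The ratio of the $\sigma^2$-derivatives of the left and right sides is $P(a)=\frac{\log(1+a)}{a}e^{\rho\{h(a)-h(r)\}}$. Your convexity bound only yields $P(a)\le(1+a)\log(1+a)/a$, which is always at least $1$, and domination is genuinely false. With $r\downarrow0$, $t=0$ and $\sigma=\rho^{-1}$ one gets $P\approx\log(1/\rho)/e$, about $1.8$ at $\rho=0.01$. The fallback is false too. The logarithmic $a$-derivative of $\frac{\log(1+a)}{a}e^{\rho h(a)}$ is $\frac{1}{(1+a)\log(1+a)}-\frac1a+\rho\log(1+a)$, which tends to $-\frac12$ as $a\downarrow0$. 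Hence $\sigma^2\mapsto e^{\rho h(a)}$ is concave at small radii (e.g.\ $\rho=1$, $a=0.1$).

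What is true, and suffices, is that the difference of the two sides first increases and then decreases in $\sigma^2$. The sign of its $\sigma^2$-derivative is that of $\eta(a)=\log\frac{a}{\log(1+a)}-\rho\{h(a)-h(r)\}$, which is positive on $(0,r]$ and concave. That concavity rests on the concavity of $q\mapsto\log\{q/\log(1+q)\}$ (\cref{lem:q-log-concavity}), an ingredient absent from your plan.

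\emph{The sphere endpoint.} Even with that repair, the endpoint $\sigma^2=\rho^{-2}-t^2$, where $\|y\|=\rho^{-1}$ and $y$ has a transverse component, carries most of the difficulty, and ``an endpoint check'' is not an argument. The paper bounds the left side there by $(1+\bar r)^\beta$ via Jensen, with $\bar r$ the midpoint of $r$ and $\|x+y\|$ and $\beta=\rho(\|x+y\|-r)$. It then needs the two-parameter inequality of \cref{lem:chord-comparison}. Your parametrisation would also require this estimate for $t<0$. The paper avoids that case by fixing $\|x+y\|$ and varying the radial component, along which the right side is a convex quadratic. The only non-collinear points on the sphere it must then check have positive radial component.

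\emph{Collinear case.} Your strategies here also fail.
\begin{itemize}
\item A pointwise comparison of second derivatives is impossible. For $r\downarrow0$ and $t=\rho^{-1}$, the left second derivative is about $\rho\log^2(1/\rho)/e$ for small $\rho$, against $\rho+\rho^2r\ell\approx\rho$ on the right.
\item The bound $\phi(t)\le\rho|t|\log(1+r+|t|)$ gives $e^{\phi(\rho^{-1})}\le1+r+\rho^{-1}$, whereas the right side at $t=\rho^{-1}$ tends to $1+\frac1{2\rho}$ as $r\downarrow0$. The true value is about $\frac1{e\rho}$, so this bound discards exactly the margin the inequality needs.
\item $\log(\mathrm{RHS})-\phi$ is not monotone on $(0,\rho^{-1}]$: at $\rho=1$ and $r\downarrow0$, its derivative at $t=1$ tends to $\frac23-\log2<0$.
\end{itemize}
The paper's device is to write the left side as $e^{\pm\theta A}$, with $\theta=\rho|t|\le1$ and $A\ge0$ the secant slope of $h$ between $r$ and $r+t$. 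For $t>0$ it uses $e^{\theta A}\le1+\theta A+\theta^2(e^A-1-A)$ (termwise, since $\theta^k\le\theta^2$ for $k\ge2$). It combines this with Jensen's bound $e^A\le1+r+t/2$, the estimate $0\le A-\ell\le t/2$, and $r-\ell\le r\ell/2$. For $-r\le t<0$ it uses $e^{-u}\le1-u+u^2/2$ and $A^2\le\ell^2\le r\ell$.

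\emph{The case $x=0$.} Your argument closes only for $\rho\ge2$. Absorbing $\rho s^4/12+\rho^2s^4/8$ into $\rho s^3/6$ at $s=\rho^{-1}$ requires $\frac1{12\rho}+\frac18\le\frac16$. At $\rho=1$, $s=1$, your polynomial bound equals $\frac5{12}>\log\frac32$. Obtain the case $x=0$ instead by letting $x\to0$ in the case $x\neq0$, using continuity of $F$ and $\nabla F$.
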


\begin{proof}
  Since $h(u) = O(u^2)$ as $u \downarrow 0$, one has $F(x) - F(0) = O(\norm{x}^2)$, so $F$ is Fréchet differentiable with $\nabla F(0) = 0$.

  Suppose that $x\ne0$. Then, by the chain rule,
  \[
    \nabla F(x) = \rho\log(1+\norm{x})F(x)\frac{x}{\norm{x}}.
  \]
  Dividing the desired inequality by $F(x)$ shows that it is equivalent to
\[
  \exp\curlyb{\rho (h(\norm{x+y}) - h(\norm{x}))} \leq 1 + \frac{\rho}{2}\norm{y}^2 + \rho \frac{\log(1+\norm{x})}{\norm{x}} \langle x,y \rangle + \frac{\rho^2}{2} \frac{\log(1+\norm{x})}{\norm{x}} \langle x, y \rangle^2.
\]
The left-hand side depends on $x$ and $y$ only through the initial radius $\norm{x}$ and the terminal radius $\norm{x+y}$. On the right-hand side, the increment $y$ appears through $\langle x,y \rangle$ and $\norm{y}^2$. These are related by
\[
  \norm{x+y}^2 = \norm{x}^2 + 2\langle x,y \rangle + \norm{y}^2.
\]
Consequently, once $r:=\norm{x}$ and $q:=\norm{x+y}$ are fixed, either of the quantities $\langle x,y \rangle$ and $\norm{y}^2$ determines the other. Eliminating $\norm{y}^2$, the right-hand side becomes the convex quadratic in $a= \langle x/\norm{x}, y\rangle$ given by
\[
  D(a) = 1+\frac{\rho}{2}(q^2 - r^2) + \rho (\log(1+r) - r) a + \frac{\rho^2 r\log(1+r)}{2} a^2.
\]
Let $b \geq 0$ be defined by $\norm{y}^2 = a^2 + b^2$. Then $q^2 = (r+a)^2 + b^2$ and $\norm{y}^2 \leq \rho^{-2}$ gives
\[
  -q-r \leq a \leq q-r \spaced{and} a \geq \frac{q^2-r^2-\rho^{-2}}{2r}.
\]
Let $\cI=[a_-,a_+]$ be the interval given by the above two constraints and write
\[
   a_\star = \frac{r-\log(1+r)}{\rho r\log(1+r)}
\]
for the unconstrained minimiser of $D$. Since $a\in\cI$, the desired inequality holds if
\[
  \exp\curlyb{\rho (h(q) - h(r))} \leq \min_{u\in\cI} D(u).
\]
And since $D$ is convex and $\cI$ is an interval, it suffices to check that the inequality holds at the left endpoint $a=a_-$, at the right endpoint $a = a_+$ and at the unconstrained minimiser $a = a_\star$ when $a_\star\in\cI$. \cref{app:scalar-bellman} verifies these three scalar inequalities.

For the case $x = 0$, apply the inequality already proved to any sequence $x_n \neq 0$ with $x_n \to 0$ and pass to the limit. By the continuity of $F$ and $\nabla F$,
\[
  F(y) \leq 1 + \frac{\rho}{2} \norm{y}^2,
\]
which is the desired inequality at $x=0$.
\end{proof}

We now apply the above pointwise inequality after normalising by the updated covariance. Put
\[
  B=G+W+\rho I,\qquad z=\frac{B^{-1/2}m}{\sqrt\rho},\qquad Y=\frac{B^{-1/2}X}{\sqrt\rho},\qquad
  S=B^{-1/2}WB^{-1/2}.
\]
Then $\E Y=0$, $S=\rho\E[Y\otimes Y]$, and $\norm{Y}\le\rho^{-1}$ a.s. Moreover,
\[
  I-S=B^{-1/2}(G+\rho I)B^{-1/2},
\]
so $S\in\ptrclass$ and $\norm{S}_{\op}<1$.

Applying \cref{lem:bennett-tangent} with $x=z$ and $y=Y$, and taking expectations, yields
\begin{align*}
    \E F(z+Y) &\leq F(z) +\frac{\rho}{2}F(z)\E\norm{Y}^2 +\frac{\rho}{2}\E[\langle\nabla F(z),Y\rangle\langle z,Y\rangle] \\
    &= F(z) + \frac{1}{2}F(z) \tr(S) + \frac{1}{2} \langle \nabla F(z), Sz \rangle,
\end{align*}
where the equality follows from $S=\rho\E[Y\otimes Y]$.
The next lemma gives a determinant-based upper bound for the right-hand side expression.

\begin{lemma}\label{lem:logdet-domination}
Let $S\in\ptrclass$ satisfy $\norm{S}_{\op}<1$. Then, for every $z\in\cH$,
\[
  F(z)+\frac12F(z)\tr S+\frac12\langle \nabla F(z),Sz\rangle
  \le
  \det(I-S)^{-1/2}F((I-S)^{-1/2}z).
\]
\end{lemma}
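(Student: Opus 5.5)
The plan is to interpolate the covariance update along the segment $t\mapsto tS$, $t\in[0,1]$, and to read the left-hand side as the tangent-line value at $t=0$ of a convex function whose value at $t=1$ is the right-hand side. Define
\[
  \varphi(t)=-\frac12\log\det(I-tS)+\rho h\roundb[\big]{\norm{(I-tS)^{-1/2}z}},\qquad t\in[0,1].
\]
This is well defined because $\norm{tS}_{\op}<1$. It satisfies $e^{\varphi(0)}=F(z)$ and $e^{\varphi(1)}=\det(I-S)^{-1/2}F((I-S)^{-1/2}z)$. If $\varphi$ is convex on $[0,1]$ and right-differentiable at $0$, then
\[
  e^{\varphi(1)}\ge e^{\varphi(0)+\varphi'(0)}\ge e^{\varphi(0)}\roundb[\big]{1+\varphi'(0)}
\]
by $e^s\ge1+s$. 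So it suffices to prove convexity and to identify $F(z)\varphi'(0)$ with $\frac12F(z)\tr S+\frac12\langle\nabla F(z),Sz\rangle$.

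For both steps, diagonalise: $S$ is positive, trace class and self-adjoint, so the spectral theorem gives an orthonormal eigenbasis $(e_j)$ with eigenvalues $0\le\lambda_j\le\norm{S}_{\op}<1$ and $\sum_j\lambda_j<\infty$. Write $z_j=\langle z,e_j\rangle$.

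\emph{Log-determinant term.} It equals $-\frac12\sum_j\log(1-t\lambda_j)$. This is a sum of convex functions. Its series converges and can be differentiated termwise, since $\abs{\log(1-t\lambda_j)}\le C\lambda_j$ uniformly in $t\in[0,1]$, with $C$ depending only on $\norm{S}_{\op}$. Its derivative at $0$ is $\frac12\tr S$.

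\emph{Radius term.} We have
\[
  N(t):=\norm{(I-tS)^{-1/2}z}=\roundb[\Big]{\sum_j f_j(t)^2}^{1/2},\qquad f_j(t)=\abs{z_j}(1-t\lambda_j)^{-1/2}.
\]
Each $f_j$ is nonnegative and convex. The $\ell^2$ norm is convex and coordinatewise nondecreasing on the nonnegative orthant, so $N$ is convex. Since $h$ is convex and nondecreasing on $\Rp$, $h\circ N$ is convex, and hence $\varphi$ is convex.

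For the derivative, $N(t)^2=\langle z,(I-tS)^{-1}z\rangle$ has derivative $\langle z,Sz\rangle$ at $t=0$, again by dominated termwise differentiation. When $z\ne0$ this gives $N'(0)=\langle z,Sz\rangle/(2\norm{z})$. Using $h'(r)=\log(1+r)$ and the formula $\nabla F(z)=\rho\log(1+\norm{z})F(z)z/\norm{z}$ from the proof of \cref{lem:bennett-tangent}, we get
\[
  F(z)\,\rho h'(\norm{z})N'(0)=\frac12\langle\nabla F(z),Sz\rangle,
\]
which is exactly the required identity. When $z=0$, we have $\nabla F(0)=0$ and $h\circ N\ge0=h(N(0))$. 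The claim then reduces to $\det(I-S)^{-1/2}\ge1+\frac12\tr S$, which follows from the same tangent-line argument applied to the log-determinant term alone.

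The only delicate point is the infinite-dimensional bookkeeping. Convexity of $N$ must be established through the coordinate representation, not by appeal to finite-dimensional matrix convexity, and the differentiation of both series must be justified. Both are handled by the uniform domination of the terms by constant multiples of $\lambda_j$ and of $\lambda_jz_j^2$, which is available because $\norm{S}_{\op}<1$. Alternatively, one can prove the inequality for the finite-rank truncations $S_k=\sum_{j\le k}\lambda_je_j\otimes e_j$ and pass to the limit. The limit is justified by continuity of the Fredholm determinant in trace norm and by norm convergence of $(I-S_k)^{-1/2}z$ to $(I-S)^{-1/2}z$.
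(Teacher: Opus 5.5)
Your proposal is correct and follows the paper's proof. It uses the same interpolation $\Theta(t)=-\frac12\log\det(I-tS)+\rho h(\norm{(I-tS)^{-1/2}z})$, the same identification of $F(z)\Theta'(0)$ with the left-hand side, and the same tangent-line step $e^{\Theta(1)}\ge e^{\Theta(0)+\Theta'(0)}\ge(1+\Theta'(0))e^{\Theta(0)}$. The only variation is how you prove convexity of the radius term: you apply the coordinatewise-monotone convexity of the $\ell^2$ norm to the convex functions $\abs{z_j}(1-t\lambda_j)^{-1/2}$ in an eigenbasis, whereas the paper (\cref{lem:theta-convex}) differentiates $R(t)=\langle z,(I-tS)^{-1}z\rangle$ twice and uses Cauchy--Schwarz to get $(R')^2\le\frac12RR''$; both arguments are valid.
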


\begin{proof}
Define, for $0\le t\le1$,
\[
  \Theta(t) = -\frac12\log\det(I-tS) + \rho h\roundb[\big]{\norm{(I-tS)^{-1/2}z}}.
\]
Then $e^{\Theta(0)}=F(z)$ and
$e^{\Theta(1)}=\det(I-S)^{-1/2}F((I-S)^{-1/2}z)$. Moreover,
\[
  F(z)\Theta'(0) = \frac12F(z)\tr S+\frac12\langle \nabla F(z),Sz\rangle.
\]
\cref{app:logdet-convexity} establishes that $\Theta$ is convex on $[0,1]$. Therefore
$\Theta(1)\ge\Theta(0)+\Theta'(0)$, and
\[
  e^{\Theta(1)} \ge e^{\Theta(0)+\Theta'(0)} \ge (1+\Theta'(0)) e^{\Theta(0)}.
\]
Substituting the expression for $F(z)\Theta'(0)$ proves the claim.
\end{proof}
Applying \cref{lem:logdet-domination} and multiplying by $\det(I+\rho^{-1}(G+W))^{-1/2}$ gives
\begin{align*}
    \det(I&+\rho^{-1}(G+W))^{-1/2}\E F(z+Y) \\
    &\leq \det(I+\rho^{-1}(G+W))^{-1/2}\det(I-S)^{-1/2}F((I-S)^{-1/2}z).
\end{align*}
We identify the two sides. Since $z+Y=B^{-1/2}(m+X)/\sqrt\rho$ and $B=G+W+\rho I$, the left-hand side is $\E\,U_\rho^h(m+X,G+W)$.

For the right-hand side, since $I-S=B^{-1/2}(G+\rho I)B^{-1/2}$ and $z=\rho^{-1/2}B^{-1/2}m$,
\[
  \norm{(I-S)^{-1/2}z}^2=\langle z,B^{1/2}(G+\rho I)^{-1}B^{1/2}z\rangle=\rho^{-1}\langle m,(G+\rho I)^{-1}m\rangle.
\]
Moreover, the multiplicativity of the Fredholm determinant \citep[Thm.~3.5]{simon2005trace} gives
\[
  \det(I-S)=\frac{\det(I+\rho^{-1}G)}{\det(I+\rho^{-1}(G+W))}.
\]
Together, these identities show that the right-hand side is $U_\rho^h(m,G)$. The preceding inequality therefore reads
\[
  \E U_\rho^h(m+X,G+W)\leq U_\rho^h(m,G).
\]
This proves the one-step estimate \eqref{eq:one-step}, and hence \cref{thm:bennett-logdet-potential}.

\subsection{Continuous-time marked-point-process extension (\cref{prop:continuous-bounded-jumps})}
\label{sec:proof-continuous-bounded-jumps}
We first derive a compensator drift inequality as an infinitesimal consequence of the one-step estimate, \cref{eq:one-step}. We then verify the martingale, quadratic-variation and bounded-jump assertions directly in $\cH$, establish the supermartingale property in finite dimensions by an Itô--Meyer formula,\footnotemark{} and pass to a general separable Hilbert space by finite-rank projection.

\footnotetext{A direct infinite-dimensional proof is also possible by applying the Banach-space Itô--Föllmer formula of \citet{hirai2023ito} to $(M,V)$, viewed as taking values in the product of $\cH$ and the real Banach space of self-adjoint trace-class operators, with the second coordinate equipped with the trace norm. Hirai's formula is stated for functions defined on the whole product space, whereas $U_\rho^h(m,G)$ is defined only on the open subset of pairs $(m,G)$ for which $G+\rho I$ is positive and boundedly invertible. Although $V_t\succeq0$ keeps the process inside this subset, establishing the corresponding local-domain formula and identifying its pathwise integral with the usual stochastic integral requires additional work.}

For the finite-dimensional argument, adopt the following notation. For a differentiable function $f(m,G)$ of $m\in\Rd$ and a symmetric matrix $G\in\R^{d\times d}$, write $\mathrm{D}_m f(m,G)$ and $\mathrm{D}_G f(m,G)$ for its partial Fréchet derivatives in the first and second arguments, respectively. Their evaluations in the directions $x\in\Rd$ and a symmetric matrix $A\in\R^{d\times d}$ are denoted by $\mathrm{D}_m f(m,G)[x]$ and $\mathrm{D}_G f(m,G)[A]$.

The expression in the lemma below is precisely the compensator drift produced by the Itô--Meyer formula. The lemma is the infinitesimal form of the one-step inequality~\eqref{eq:one-step}.
\begin{lemma}\label{lem:continuous-jump-drift}
Fix $\rho>0$ and $d\in\Np$. For every $m,x\in\Rd$ with $\norm{x}\leq1$ and every positive semidefinite $G\in\R^{d\times d}$,
\[
  \mathrm{D}_G U_\rho^h(m,G)[x\otimes x]+U_\rho^h(m+x,G)-U_\rho^h(m,G)-\mathrm{D}_m U_\rho^h(m,G)[x]\leq0.
\]
\end{lemma}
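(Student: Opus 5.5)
Our plan is to derive the drift inequality as the infinitesimal limit of the one-step estimate \eqref{eq:one-step}, applied to a small two-point increment in the direction $x$. Fix $m,x,G$ with $\norm{x}\le1$. For $\epsilon\in(0,1)$, let $X_\epsilon$ take the value $x$ with probability $\epsilon$ and the value $-\epsilon x/(1-\epsilon)$ with probability $1-\epsilon$. Then $\E X_\epsilon=0$ and $\norm{X_\epsilon}\le1$ (since $\epsilon/(1-\epsilon)\le1$ once $\epsilon\le1/2$). Its covariance is
\[
  W_\epsilon=\roundb[\Big]{\epsilon+\frac{\epsilon^2}{1-\epsilon}}x\otimes x=\frac{\epsilon}{1-\epsilon}\,x\otimes x .
\]
Applying \eqref{eq:one-step} with $\cH=\Rd$ then gives
\[
  \epsilon\,U_\rho^h(m+x,G+W_\epsilon)+(1-\epsilon)\,U_\rho^h\roundb[\Big]{m-\tfrac{\epsilon}{1-\epsilon}x,\,G+W_\epsilon}\le U_\rho^h(m,G).
\]

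Next we expand the left-hand side to first order in $\epsilon$. We use that $U_\rho^h$ is $C^1$ on the open set $\{(m,G): G+\rho I\succ0\}$. This holds because $m\mapsto F$ is $C^1$ (by \cref{lem:bennett-tangent}'s computation of $\nabla F$, including at $0$), $G\mapsto(G+\rho I)^{-1/2}$ is smooth there, and $\log\det(I+\rho^{-1}G)$ is smooth. Write $U=U_\rho^h(m,G)$.
\begin{itemize}
\item The first term equals $\epsilon U_\rho^h(m+x,G)+o(\epsilon)$, by continuity in $G$.
\item The second term equals
\[
  (1-\epsilon)U-\epsilon\,\mathrm{D}_mU[x]+\epsilon\,\mathrm{D}_GU[x\otimes x]+o(\epsilon),
\]
using $W_\epsilon=\epsilon x\otimes x+O(\epsilon^2)$ and a first-order Taylor expansion jointly in $(m,G)$.
\end{itemize}
Summing, the zeroth-order terms cancel against the right-hand side $U$. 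Dividing by $\epsilon$ and letting $\epsilon\downarrow0$ gives exactly
\[
  \mathrm{D}_GU[x\otimes x]+U_\rho^h(m+x,G)-U-\mathrm{D}_mU[x]\le0 .
\]

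The main point requiring care is the joint differentiability of $U_\rho^h$ in $(m,G)$, since the Taylor expansion needs a genuine Fréchet (or at least directional-along-a-curve) derivative. We will argue it by composition. The map $(m,G)\mapsto(G+\rho I)^{-1/2}m/\sqrt\rho$ is smooth, because the inverse square root is analytic on positive-definite matrices. The map $v\mapsto\rho h(\norm v)$ is $C^1$ on $\Rd$, with derivative $\rho\log(1+\norm v)v/\norm v$, which extends continuously by $0$ at $v=0$. Finally, $\exp$ is smooth. Alternatively, it suffices to differentiate along the curve $\epsilon\mapsto(m-\tfrac{\epsilon}{1-\epsilon}x,\,G+W_\epsilon)$, which reduces the whole argument to the chain rule. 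Everything else is routine: the case $x=0$ is trivial, and the restriction $\epsilon\le1/2$ is harmless.
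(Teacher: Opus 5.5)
Your proposal is correct and follows the paper's route: apply the one-step estimate \eqref{eq:one-step} to a two-point increment along $x$ whose large jump has probability $\epsilon$, expand to first order using the $C^1$ regularity of $U_\rho^h$, divide by $\epsilon$ and let $\epsilon\downarrow0$. The only difference is the parametrisation. The paper uses $X_\epsilon=(\xi_\epsilon-\epsilon)x$, with jumps $(1-\epsilon)x$ and $-\epsilon x$ and covariance $\epsilon(1-\epsilon)\,x\otimes x$, which is valid for all $\epsilon\in(0,1)$. Yours makes the large jump exactly $x$, so that term needs only continuity in $G$, at the harmless cost of restricting to $\epsilon\le1/2$.
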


\begin{proof}
Fix $\epsilon\in(0,1)$, let $\xi_\epsilon$ be Bernoulli with mean $\epsilon$, and set $X_\epsilon=(\xi_\epsilon-\epsilon)x$ and $A=x\otimes x$. Then $\E X_\epsilon=0$, $\norm{X_\epsilon}\leq1$ and $\E[X_\epsilon\otimes X_\epsilon]=\epsilon(1-\epsilon)A$. Write
\[
  U^-_\epsilon=U_\rho^h(m-\epsilon x,G+\epsilon(1-\epsilon)A), \qquad U^+_\epsilon=U_\rho^h(m+(1-\epsilon)x,G+\epsilon(1-\epsilon)A).
\]
Applying \cref{eq:one-step}, the one-step potential inequality proved in \cref{sec:proof-bennett-logdet-potential}, to $X_\epsilon$ gives $(1-\epsilon)U^-_\epsilon+\epsilon U^+_\epsilon\leq U_\rho^h(m,G)$. Since $U_\rho^h(m,G)=(1-\epsilon)U_\rho^h(m,G)+\epsilon U_\rho^h(m,G)$, rearranging and dividing by $\epsilon$ yields
\[
  (1-\epsilon)\frac{U_\epsilon^--U_\rho^h(m,G)}{\epsilon}+U_\epsilon^+-U_\rho^h(m,G)\leq0.
\]
The regularisation by $\rho I$ makes $U_\rho^h$ continuously differentiable in $G$, while $h(r)=O(r^2)$ as $r\downarrow0$ gives continuous differentiability in $m$. Since $\epsilon(1-\epsilon)A=\epsilon A-\epsilon^2A$,
\[
  \frac{U^-_\epsilon-U_\rho^h(m,G)}{\epsilon}\to-\mathrm{D}_mU_\rho^h(m,G)[x]+\mathrm{D}_GU_\rho^h(m,G)[A],
\]
while continuity gives $U^+_\epsilon\to U_\rho^h(m+x,G)$. Letting $\epsilon\downarrow0$ in the preceding inequality proves the claim.
\end{proof}

\begin{proof}[Proof of \cref{prop:continuous-bounded-jumps}]We first verify that $M$ and $V$ have the stated martingale, quadratic-variation and jump properties. Since $\norm{H_t(z)\otimes H_t(z)}_{\trclass}=\norm{H_t(z)}^2$, the integrability assumption makes $V_t$ well-defined as a positive trace-class operator for every $t\geq0$. By continuity and linearity of the trace,
\[
  \tr V_T=\int_{(0,T]\times\cZ}\norm{H_t(z)}^2\,\nu(\dif t,\dif z)<\infty
\]
almost surely for every $T<\infty$.

For $x\in\cH$, write $H_t^x(z)=\langle H_t(z),x\rangle$. The scalar compensated-random-measure construction of \citet[Def.~II.1.27 and Thm.~II.1.33(a)]{jacodshiryaev2003}, applied after localisation to the integrands $H^x$ and combined with the standard Hilbert-space $L^2$ completion along an orthonormal basis, gives that $M$ is a purely discontinuous local martingale that is locally square-integrable.

For $0\leq r\leq t$, the increment $V_t-V_r$ is positive, and hence
\[
  \norm{V_t-V_r}_{\trclass}=\tr(V_t-V_r)=\int_{(r,t]\times\cZ}\norm{H_s(z)}^2\,\nu(\dif s,\dif z).
\]
It follows that $V$ is locally of finite variation in trace norm. Since $\nu$ has no time atoms, the scalar measure on the right has no time atoms, and therefore $V$ is continuous in trace norm. As $V$ is adapted and continuous, it is predictable.

Since $\nu$ has no time atoms, the predictable-covariation formula in \citet[Thm.~II.1.33(a)]{jacodshiryaev2003} gives, for $x,y\in\cH$,
\[
  \left\langle\langle M,x\rangle,\langle M,y\rangle\right\rangle_t=\int_{(0,t]\times\cZ}\langle x,H_s(z)\rangle\langle H_s(z),y\rangle\,\nu(\dif s,\dif z)=\langle x,V_ty\rangle.
\]
Thus $V$ is the operator-valued predictable quadratic variation of $M$.

The jump identity in \citet[Def.~II.1.27]{jacodshiryaev2003} and the absence of time atoms of $\nu$ give
\[
  \Delta M_t=\int_{\cZ}H_t(z)\,\mu(\{t\},\dif z).
\]
Since $\mu$ has at most one point at each time and $\norm{H_t(z)}\leq1$, we have $\norm{\Delta M_t}\leq1$.

It remains to prove the supermartingale property. Assume first that $\cH=\Rd$. Fix $\rho>0$ and define
\[
  \Gamma_\rho(m,G;x)=\mathrm{D}_GU_\rho^h(m,G)[x\otimes x]+U_\rho^h(m+x,G)-U_\rho^h(m,G)-\mathrm{D}_mU_\rho^h(m,G)[x].
\]
The finite-dimensional Itô--Meyer calculation in \cref{app:continuous-bounded-jumps} shows that $U_\rho^h(M_t,V_t)$ admits the decomposition
\begin{equation}\label{eq:continuous-ito-decomposition}
  U_\rho^h(M_t,V_t)=1+N_t+A_t,\tag{$\ddagger$}
\end{equation}
where $N$ is a local martingale and $A$ is the locally finite-variation process
\[
  A_t=\int_{(0,t]\times\cZ}\Gamma_\rho(M_{s-},V_s;H_s(z))\,\nu(\dif s,\dif z).
\]
Since $\norm{H_s(z)}\leq1$, \cref{lem:continuous-jump-drift} gives $  \Gamma_\rho(M_{s-},V_s;H_s(z))\leq0$. Hence $A$ is nonincreasing, and \eqref{eq:continuous-ito-decomposition} shows that $(U_\rho^h(M_t,V_t))_{t\geq0}$ is a local supermartingale. Since $U_\rho^h$ is nonnegative, it is a supermartingale with initial value $U_\rho^h(0,0)=1$.

It remains to pass the supermartingale property to the general separable Hilbert space $\cH$. Choose an orthonormal basis $(e_j)_{j\geq1}$ of $\cH$, let $P_k$ be the orthogonal projection onto $\spn\{e_1,\dots,e_k\}$, and define $H_t^{(k)}(z)=P_kH_t(z)$. Then $H^{(k)}$ is predictable, $\norm{H_t^{(k)}(z)}\leq\norm{H_t(z)}\leq1$, and
\[
  \int_{(0,T]\times\cZ}\norm{H_t^{(k)}(z)}^2\,\nu(\dif t,\dif z)\leq\int_{(0,T]\times\cZ}\norm{H_t(z)}^2\,\nu(\dif t,\dif z)<\infty
\]
almost surely for every $T<\infty$. The corresponding processes satisfy
\[
  M_t^{(k)}=\int_{(0,t]\times\cZ}H_s^{(k)}(z)\,(\mu-\nu)(\dif s,\dif z)=P_kM_t
\]
and
\[
  V_t^{(k)}=\int_{(0,t]\times\cZ}H_s^{(k)}(z)\otimes H_s^{(k)}(z)\,\nu(\dif s,\dif z)=P_kV_tP_k.
\]
Viewed as processes on the finite-dimensional Hilbert space $P_k\cH$, $(M^{(k)},V^{(k)})$ satisfies the assumptions already considered. Hence $(U_\rho^h(M_t^{(k)},V_t^{(k)}))_{t\geq0}$ is a nonnegative supermartingale with initial value one when $U_\rho^h$ is evaluated on $P_k\cH$.

To compare the projected potentials with $U_\rho^h(M_t,V_t)$, regard $M_t^{(k)}$ as an element of $\cH$ and $V_t^{(k)}$ as an operator on $\cH$ that vanishes on $(P_k\cH)^\perp$. Under this identification, the Hilbert-space formula for $U_\rho^h(M_t^{(k)},V_t^{(k)})$ agrees with its finite-dimensional value. Indeed, the extension of $V_t^{(k)}$ contributes only unit eigenvalues to $\det(I+\rho^{-1}V_t^{(k)})$ on $(P_k\cH)^\perp$, while $M_t^{(k)}$ has no component in that subspace.

For every fixed $t\geq0$, almost surely,
\[
  \norm{M_t^{(k)}-M_t}\to0,\qquad \norm{V_t^{(k)}-V_t}_{\trclass}\to0.
\]
The first convergence follows from the strong convergence of $P_k$. For the second, since $V_t$ is trace-class,
\[
  V_t-P_kV_tP_k=(I-P_k)V_t+P_kV_t(I-P_k),
\]
and both terms on the right converge to zero in trace norm. Continuity of the Fredholm determinant in trace norm gives
$\det(I+\rho^{-1}V_t^{(k)})\to\det(I+\rho^{-1}V_t)$. Moreover, trace-norm convergence implies operator-norm convergence, so
$(V_t^{(k)}+\rho I)^{-1/2}\to(V_t+\rho I)^{-1/2}$ in operator norm. Together with $M_t^{(k)}\to M_t$, this shows that
\[
  U_\rho^h(M_t^{(k)},V_t^{(k)})\to U_\rho^h(M_t,V_t).
\]

Fix $0\leq s\leq t$. Since the projected potentials are nonnegative, conditional Fatou's lemma and their finite-dimensional supermartingale property give
\begin{align*}
  \E[U_\rho^h(M_t,V_t)\mid\cF_s]&\leq\liminf_{k\to\infty}\E[U_\rho^h(M_t^{(k)},V_t^{(k)})\mid\cF_s] \\
  &\leq\liminf_{k\to\infty}U_\rho^h(M_s^{(k)},V_s^{(k)})=U_\rho^h(M_s,V_s).
\end{align*}
Thus $(U_\rho^h(M_t,V_t))_{t\geq0}$ is a nonnegative supermartingale with initial value $U_\rho^h(0,0)=1$.

The stated probability bound now follows from Ville's inequality. For every $T<\infty$,
\[
  \P{\exists t\in[0,T]:U_\rho^h(M_t,V_t)>e^y}\leq e^{-y}.
\]
As $T\to\infty$, the crossing events on the left increase to the corresponding event over all $t\geq0$, so continuity of probability from below preserves the same bound. Expanding the definition of $U_\rho^h$ establishes the claim.
\end{proof}

\printbibliography

\appendix

\section{Directional sub-Poisson and sub-gamma counterexample}\label{app:sub-poisson-counterexample}

For $\rho>0$, $f\in\curlyb{h,b}$, scalar $m\in\R$ and $v\geq0$, write
\[
  U_\rho^f(m,v)=(1+v/\rho)^{-1/2}\exp\curlyb[\bigg]{\rho f\roundb[\bigg]{\frac{\abs{m}}{\sqrt{\rho(v+\rho)}}}}.
\]

\begin{proposition}\label{prop:sub-poisson-counterexample}
Let $X$ have distribution
\[
  \P{X=2}=\P{X=-2}=\frac1{36},\qquad \P{X=0}=\frac{17}{18}.
\]
Then $X$ is centred with $v:=\E X^2=2/9$, and satisfies the two-sided sub-Poisson condition with variance proxy $v$:
\[
  \log\E e^{sX}=\log\E e^{-sX}\leq v\psi_h(s),\qquad s\geq0.
\]
It therefore also satisfies the two-sided sub-gamma condition with the same variance proxy:
\[
  \log\E e^{sX}=\log\E e^{-sX}\leq v\psi_b(s),\qquad 0\leq s<3.
\]
Moreover, for each $f\in\curlyb{h,b}$,
\[
  \E U_\rho^f(X,v)=1+\frac{1}{54\rho^2}+O(\rho^{-3}),\qquad \rho\to\infty,
\]
so $\E U_\rho^f(X,v)>1$ for all sufficiently large $\rho$.
\end{proposition}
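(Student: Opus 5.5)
The proof has three parts: verify the moments, establish the sub-Poisson bound, and expand the expected potential in powers of $1/\rho$. The moment claims are immediate: $X$ is symmetric, so $\E X=0$, and $\E X^2=2\cdot\frac{1}{36}\cdot 4=\frac29$. Symmetry also gives $\E e^{sX}=\E e^{-sX}=1+\frac1{18}(\cosh 2s-1)$.

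For the sub-Poisson bound, I would use $\log(1+w)\le w$, which reduces the claim to
\[
\frac{1}{18}(\cosh 2s-1)\le \frac29(e^s-1-s),\qquad s\ge0.
\]
Equivalently, $\cosh 2s-1\le 4(e^s-1-s)$. Both sides have power series with nonnegative coefficients. The left side is $\sum_{k\ge1} 2^{2k}s^{2k}/(2k)!$, and the right side is $4\sum_{j\ge2}s^j/j!$. Comparing coefficients of $s^{2k}$ gives $4^k\le 4$, which holds only for $k=1$. So a termwise comparison fails, and I must argue differently. For large $s$ the inequality actually fails, since $\cosh 2s\sim e^{2s}/2$ outgrows $4e^s$.

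This means the claim cannot hold for all $s\ge0$ as written if compared this way, so I would use the exact logarithm instead. The exact statement is
\[
\log\Bigl(1+\tfrac{1}{18}(\cosh 2s-1)\Bigr)\le \tfrac29(e^s-1-s).
\]
For large $s$, the left side is about $2s$ while the right side is about $\frac29e^s$. These cross only in a bounded range. For small $s$ both sides are $\frac{s^2}{9}+\dots$, with fourth-order terms on the left of $\frac{1}{18}\cdot\frac{16s^4}{24}-\frac12\cdot\frac{s^4}{81}\approx 0.031s^4$, against $\frac29\cdot\frac{s^3}{6}$ on the right. The right side therefore dominates near $0$ through its cubic term. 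I would handle the middle range by an elementary but careful estimate. One route is to split at $s=1$. On $[0,1]$ I would use $\log(1+w)\le w$ together with $\cosh 2s-1\le 4(e^s-1-s)$, which can be checked there by comparing series with a tail bound. On $[1,\infty)$ I would use $\log(1+w)\le \log(\cosh 2s/18+1)\le 2s-\log 18+\log 2$, compare this with $\frac29(e^s-1-s)$, and check the endpoint and derivatives. I expect this middle-range verification to be the main obstacle: it is routine but fiddly, and may need a finer split of the interval. The sub-gamma claim then follows from $\psi_h\le\psi_b$ on $[0,3)$.

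For the expansion, I would set $\rho\to\infty$ and write $t=\abs{m}/\sqrt{\rho(v+\rho)}$. Since $t=O(1/\rho)$, I would use $h(t)=\frac{t^2}{2}-\frac{t^3}{6}+O(t^4)$ and $b(t)=\frac{t^2}{2}-\frac{t^3}{6}+O(t^4)$; the latter follows from $b^{-1}(x)=\sqrt{2x}+x/3$, which gives matching cubic terms. Then
\[
\rho f(t)=\frac{m^2}{2(\rho+v)}-\frac{\abs m^3}{6\rho^2}+O(\rho^{-3}),
\]
with $m=X\in\{0,\pm2\}$. Expanding the exponential to order $\rho^{-2}$ and multiplying by $(1+v/\rho)^{-1/2}=1-\frac{v}{2\rho}+\frac{3v^2}{8\rho^2}+\dots$, I would take expectations using $\E X^2=v$, $\E\abs X^3=\frac{16}{36}=\frac49$ and $\E X^4=\frac{32}{36}=\frac89$. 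The order-$\rho^{-1}$ terms cancel, since $\frac v2-\frac v2=0$. For the $\rho^{-2}$ coefficient, the contributions are $-\frac{v^2}{2}$ from $\frac{m^2}{2\rho}\cdot(-\frac{v}{\rho})$, then $\frac{\E X^4}{8}=\frac19$, then $-\frac{\E\abs X^3}{6}=-\frac{2}{27}$, then the cross term $-\frac v2\cdot\frac v2=-\frac{v^2}{4}$, and finally $\frac{3v^2}{8}$. With $v^2=\frac4{81}$ these sum to $\frac{1}{54}$, which gives the stated expansion and the final inequality.
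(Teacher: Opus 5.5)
Your moment calculations, the reduction of the sub-gamma claim to $\psi_h\le\psi_b$, and the $\rho^{-2}$ expansion are correct and match the paper's argument. The paper uses the same cubic Taylor expansion of $h$ and $b$ and the same product with $(1+v/\rho)^{-1/2}$. Your five contributions sum to $\frac18(\mu_4-\frac43\mu_3-3v^2)=\frac1{54}$, as in the paper.

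The gap is the sub-Poisson inequality for $s\ge1$, where your plan does not work.

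\emph{The proposed bound fails near $s=1$.} Your bound $\log\bigl(1+\frac{\cosh 2s}{18}\bigr)\le 2s-\log 9$ is equivalent to $e^{2s}\ge 12+\frac13 e^{-2s}$. It is therefore false for $s\lesssim1.24$; at $s=1$ its right-hand side is negative.

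\emph{Where it holds, it is too weak.} The quantity $2s-\log 9$ still exceeds $\frac29(e^s-1-s)$ until $s\approx3.1$. For example, at $s=2$ the values are $1.80$ against $0.98$.

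\emph{Your $[0,1]$ argument cannot be stretched far enough.} The route through $\log(1+w)\le w$ stops at $s\approx1.15$, where $\cosh 2s-1\le4(e^s-1-s)$ fails. So roughly $(1.15,3.1)$ is left with no valid estimate.

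\emph{A finer split with crude bounds is unlikely to close this.} Write $g(s)=\frac29(e^s-1-s)-\log\frac{17+\cosh 2s}{18}$. Then $g(1)\approx0.017$, while each of the two terms is about $0.15$. Even the mild relaxation $\cosh 2s\le\frac12(e^{2s}+1)$ already gives an upper bound for $\log\frac{17+\cosh 2s}{18}$ that exceeds $\frac29(e^s-1-s)$ at $s=1.1$.

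The missing idea, which the paper uses, is to avoid splitting and prove that $g$ is monotone. One has $g(0)=0$ and
\[
  g'(s)=\tfrac29(e^s-1)-\frac{2\sinh 2s}{17+\cosh 2s}.
\]
Substituting $t=e^s$, so that $\cosh 2s=\frac12(t^2+t^{-2})$ and $\sinh 2s=\frac12(t^2-t^{-2})$, the derivative factorises:
\[
  9t^2(17+\cosh 2s)\,g'(s)=(t-1)^2(t^3-8t^2+17t+8)=(t-1)^2\bigl(t(t-4)^2+t+8\bigr)\ge0.
\]
Hence $g\ge0$ on $[0,\infty)$, and symmetry of $X$ gives the same bound for $-X$. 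This handles every $s\ge0$ at once, with no numerical interval checks.
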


\begin{proof}
The law of $X$ is symmetric, so $\E X=0$, while direct calculation gives $v=\E X^2=2/9$. For $s\geq0$,
\[
  \E e^{sX}=\frac{17+\cosh(2s)}{18}.
\]
Define
\[
  g(s)=\frac29(e^s-1-s)-\log\roundb[\bigg]{\frac{17+\cosh(2s)}{18}}.
\]
Then $g(0)=0$ and
\[
  g'(s)=\frac29(e^s-1)-\frac{2\sinh(2s)}{17+\cosh(2s)}.
\]
Writing $t=e^s\geq1$ gives
\begin{align*}
  9t^2(17+\cosh(2s))g'(s)&=(t-1)^2\roundb[\big]{t^3-8t^2+17t+8} \\
  &=(t-1)^2\roundb[\big]{t(t-4)^2+t+8}\geq0.
\end{align*}
Hence $g$ is nondecreasing on $\Rp$, and therefore $g(s)\geq0$ for every $s\geq0$. Symmetry gives the same inequality with $X$ replaced by $-X$, proving the sub-Poisson condition. Since $\psi_h(s)\leq\psi_b(s)$ for $0\leq s<3$, the sub-gamma condition follows.

It remains to analyse the potentials. The Bennett and Bernstein rate functions have the common expansion
\[
  h(u)=\frac{u^2}{2}-\frac{u^3}{6}+O(u^4) \spaced{and} b(u)=\frac{u^2}{2}-\frac{u^3}{6}+O(u^4),\qquad u\downarrow0.
\]
Write $\mu_3=\E\abs{X}^3$ and $\mu_4=\E X^4$. Substituting $\abs{X}/\sqrt{\rho(\rho+v)}$ into this expansion and using $\abs{X}\leq2$ gives, for $f\in\curlyb{h,b}$,
\[
  \rho f\roundb[\bigg]{\frac{\abs{X}}{\sqrt{\rho(\rho+v)}}}=\frac{X^2}{2\rho}-\frac{vX^2/2+\abs{X}^3/6}{\rho^2}+O(\rho^{-3}).
\]
Expanding the exponential and the factor $(1+v/\rho)^{-1/2}$ therefore yields
\[
  \E U_\rho^f(X,v)=1+\frac{\mu_4-\frac43\mu_3-3v^2}{8\rho^2}+O(\rho^{-3}),\qquad f\in\curlyb{h,b}.
\]
For the present law,
\[
  \mu_3=\frac49,\qquad \mu_4=\frac89,\qquad \mu_4-\frac43\mu_3-3v^2=\frac4{27}.
\]
Substitution proves the claimed expansion.
\end{proof}

\section{Scalar verification for \cref{lem:bennett-tangent}}
\label{app:scalar-bellman}

We complete the scalar verification deferred from the proof of \cref{lem:bennett-tangent}. Fix $r>0$, $q\geq0$ and $\rho>0$, and set
\[
  \ell=\log(1+r), \qquad D(a)=1+\frac{\rho}{2}(q^2-r^2)+\rho(\ell-r)a+\frac{\rho^2r\ell}{2}a^2.
\]
Let $\cI=[a_-,a_+]$ be the nonempty set of $a$ for which there exists $b\geq0$ satisfying
\[
  q^2=(r+a)^2+b^2, \qquad a^2+b^2\leq\rho^{-2}.
\]
By the reduction in the main text, it remains to prove
\[
  \exp\curlyb{\rho\{h(q)-h(r)\}}\leq\min_{u\in\cI}D(u).
\]
Since $D$ is a convex quadratic and $\cI$ is an interval, its minimum over $\cI$ is attained at the right endpoint, at its unconstrained minimiser $a_\star$ when $a_\star\in\cI$, or at the left endpoint. The proof will be complete once the bound is verified to hold in the three cases.

We shall use the elementary bounds
\[
  \frac{2r}{2+r}\leq\ell<r.
\]
These follow since $r-\ell$ and $\ell-2r/(2+r)$ vanish at zero and are increasing on $[0,\infty)$.

\subsection{Right endpoint verification} For every $a\in\cI$, there exists $b\geq0$ such that $q^2=(r+a)^2+b^2$, and hence $a\leq q-r$. Moreover, for any such $a$ and $b$, the reverse triangle inequality gives
\[
  \abs{q-r}=\abs{\sqrt{(r+a)^2+b^2}-r}\leq\sqrt{a^2+b^2}\leq\rho^{-1}.
\]
Thus $a=q-r$ and $b=0$ satisfy the defining constraints of $\cI$. Therefore $a_+=q-r$ is the right endpoint, and
\[
  D(a_+) = 1 + \rho \ell a_+ + \frac{\rho}{2}a_+^2 + \frac{\rho^2 r \ell}{2} a_+^2.
\]
It therefore suffices to prove that, whenever $\abs{a}\leq\rho^{-1}$ and $r+a\geq0$,
\begin{equation}\label{eq:right-endpoint-estimate}
  \exp\curlyb{\rho\{h(r+a)-h(r)\}}
  \leq
  1+\rho\ell a+\frac{\rho}{2}a^2+\frac{\rho^2r\ell}{2}a^2.
\end{equation}
We prove the inequality separately for positive and negative $a$; the case $a=0$ holds with equality.

First take $a=\delta\in(0,\rho^{-1}]$ and set $\theta=\rho\delta\in(0,1]$. Let
\[
  A_+=\frac{h(r+\delta)-h(r)}{\delta}=\frac1\delta\int_0^\delta\log(1+r+s)\dif s.
\]
Then the left-hand side of \cref{eq:right-endpoint-estimate} is $e^{\theta A_+}$, while its right-hand side is
\[
  1+\theta\ell+\frac{\rho^{-1}+r\ell}{2}\theta^2.
\]
For $A\geq0$ and $0\leq\theta\leq1$,
\[
  e^{\theta A}\leq1+\theta A+\theta^2(e^A-1-A).
\]
Indeed, this follows by comparing the power series term by term, using $\theta^k\leq\theta^2$ for every $k\geq2$. Applying this inequality with $A=A_+$, it is enough to prove
\[
  \frac{A_+-\ell}{\rho\delta}+e^{A_+}-1-A_+\leq\frac{1}{2\rho} + \frac{r\ell}{2}.
\]
Write $d=A_+-\ell$. Since $\log(1+r+s)-\log(1+r)\leq s$,
\[
  0\leq d=\frac1\delta\int_0^\delta\{\log(1+r+s)-\log(1+r)\}\,\dif s\leq\frac\delta2.
\]
Moreover, Jensen's inequality gives
\[
  e^{A_+}\leq\frac1\delta\int_0^\delta(1+r+s)\,\dif s=1+r+\frac\delta2.
\]
Using the two bounds and $A_+=\ell+d$, we therefore obtain
\[
  \frac{d}{\rho\delta} + e^{A_+} - 1 - A_+ \leq \frac{d}{\rho\delta} + r + \frac{\delta}{2} - \ell - d
  = \frac{d}{\delta}\roundb[\Big]{\frac{1}{\rho} - \delta} + r + \frac{\delta}{2} - \ell
  \leq\frac{1}{2\rho} + r - \ell
\]
where the second inequality follows from $1/\rho - \delta \geq 0$ and $d/\delta\leq1/2$. Finally, $r - \ell \leq \frac{r\ell}{2}$, by the elementary bound $\ell \geq 2r/(2+r)$. This proves the positive case.

Now take $a=-\delta$, where $0<\delta\leq\min\{r,\rho^{-1}\}$, and again set $\theta=\rho\delta$. Let
\[
  A_-=\frac{h(r)-h(r-\delta)}{\delta}=\frac1\delta\int_0^\delta\log(1+r-s)\,\dif s.
\]
Since $\delta\leq r$, we have $0\leq A_-\leq\ell$. The left-hand side of \cref{eq:right-endpoint-estimate} is $e^{-\theta A_-}$, while its right-hand side is
\[
  1-\theta\ell+\frac{\rho^{-1}+r\ell}{2}\theta^2.
\]
Using $e^{-u}\leq1-u+u^2/2$ for $u\geq0$, it is enough to show
\[
  \rho^{-1}\frac{\ell-A_-}{\delta}+\frac{A_-^2}{2}\leq\frac{\rho^{-1}+r\ell}{2}.
\]
As above,
\[
  0\leq\ell-A_-=\frac1\delta\int_0^\delta\{\log(1+r)-\log(1+r-s)\}\,\dif s\leq\frac\delta2.
\]
Also, $A_-\leq\ell<r$ gives $A_-^2\leq\ell^2\leq r\ell$. Hence the required inequality holds.

\subsection{Unconstrained minimiser verification}

Recall that the unconstrained minimiser is
\[
  a_\star=\frac{r-\ell}{\rho r\ell}.
\]
The elementary bounds on $\ell$ give $0<a_\star<\rho^{-1}$. Direct substitution gives $D(a_\star)=R(q)$, where
\[
  R(q)=1+\frac{\rho}{2}(q^2-r^2)-\frac{(r-\ell)^2}{2r\ell}.
\]
Writing $E(q):=\exp\curlyb{\rho\{h(q)-h(r)\}}$, the required inequality in this case is $E(q)\leq R(q)$ whenever $a_\star\in\cI$. We first reduce the claim to the endpoint inequalities $E(q_-)\leq R(q_-)$ and $E(q_+)\leq R(q_+)$. The lower-endpoint inequality follows from the right-endpoint estimate above. At the upper endpoint, the corresponding values of $a$ and $b$ satisfy $a^2+b^2=\rho^{-2}$, and we prove the required inequality under this equality.

Suppose that $a_\star\in\cI$. By definition, there exists $b\geq0$ such that
\[
  q^2=(r+a_\star)^2+b^2, \qquad a_\star^2+b^2\leq\rho^{-2}.
\]
The first condition and $a_\star>0$ give $q\geq r+a_\star=:q_-$. Substituting $b^2=q^2-(r+a_\star)^2$ into the second gives $q^2-r^2-2ra_\star\leq\rho^{-2}$, or equivalently
\[
  q\leq\sqrt{r^2+\rho^{-2}+2ra_\star}=:q_+.
\]
Thus
\[
  a_\star\in\cI \implies q\in[q_-,q_+],
\]
and it suffices to prove $E(q)\leq R(q)$ throughout this interval.

We show that it is enough to verify the endpoint inequalities $E(q_-)\leq R(q_-)$ and $E(q_+)\leq R(q_+)$. Differentiating gives
\[
  (R-E)'(q)=\rho\{q-E(q)\log(1+q)\}.
\]
For $q>0$,
\[
  q-E(q)\log(1+q)=\log(1+q)\left\{\frac{q}{\log(1+q)}-E(q)\right\}.
\]
Since $\log(1+q)>0$ and the logarithm is increasing, $(R-E)'(q)$ has the same sign as
\[
  \eta(q):=\log\frac{q}{\log(1+q)}-\rho\{h(q)-h(r)\}.
\]
Indeed, $\eta(q)\geq0$ if and only if $E(q)\leq q/\log(1+q)$. We shall use the following concavity property to control the sign changes of $\eta$.

\begin{lemma}\label{lem:q-log-concavity}
The function
\[
  q\mapsto\log\frac{q}{\log(1+q)}, \qquad q>0,
\]
is concave.
\end{lemma}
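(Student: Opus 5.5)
The plan is to show directly that the second derivative of $\varphi(q)=\log q-\log\log(1+q)$ is nonpositive on $(0,\infty)$, and then to reduce the resulting sign condition to a one-variable inequality handled by two rounds of differentiation. Write $L=\log(1+q)>0$. Then
\[
  \varphi'(q)=\frac1q-\frac{1}{(1+q)L},
  \qquad
  \varphi''(q)=-\frac1{q^2}+\frac{L+1}{(1+q)^2L^2},
\]
where the second term comes from differentiating $\roundb{(1+q)L}^{-1}$, using $\frac{\dif}{\dif q}[(1+q)L]=L+1$. Concavity is therefore equivalent to
\[
  (1+L)\,q^2\leq(1+q)^2L^2,\qquad q>0.
\]

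To make this tractable, I will substitute $t=L\in(0,\infty)$, so that $1+q=e^t$ and $q=e^t-1$. Since everything is positive, taking square roots and dividing by $e^t$ reduces the condition to
\[
  g(t):=\frac{t}{\sqrt{1+t}}-1+e^{-t}\geq0,\qquad t>0.
\]
As a sanity check, a Taylor expansion gives $t-\tfrac{t^2}{2}+\tfrac{3t^3}{8}$ for the first term and $t-\tfrac{t^2}{2}+\tfrac{t^3}{6}$ for $1-e^{-t}$, consistent with the claim near zero.

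Since $g(0)=0$, it suffices to prove $g'\geq0$. A direct computation gives
\[
  g'(t)=\frac{1+t/2}{(1+t)^{3/2}}-e^{-t},
\]
so $g'\geq0$ is equivalent, after taking logarithms, to
\[
  k(t):=t+\log(1+t/2)-\tfrac32\log(1+t)\geq0.
\]
Again $k(0)=0$, and
\[
  k'(t)=1+\frac1{2+t}-\frac{3}{2(1+t)}.
\]
Multiplying $k'(t)$ by $2(1+t)(2+t)>0$ gives
\[
  2(1+t)(2+t)+2(1+t)-3(2+t)=2t^2+5t\geq0.
\]
Hence $k$ is nondecreasing and so $k\geq0$; therefore $g'\geq0$, hence $g\geq0$, hence $\varphi''\leq0$, which is the claimed concavity.

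The main obstacle is choosing the substitution $t=\log(1+q)$ and taking the square root. Without these, the inequality in $q$ mixes polynomial and logarithmic terms and does not separate cleanly. After the substitution, everything reduces to the chain of elementary monotonicity arguments above, in which each function vanishes at $0$ and has a nonnegative derivative.
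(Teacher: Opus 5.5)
Your proof is correct and follows the paper's route: the same formula for $\varphi''$, the same reduction to $(1+L)q^2\le(1+q)^2L^2$, and the same substitution $t=\log(1+q)$. The only difference is the final step. The paper differentiates $u^2-(1+u)(1-e^{-u})^2$ once and invokes $-\log v\ge 2(1-v)/(1+v)$, whereas you take square roots first and finish with a logarithmic derivative that reduces to the polynomial inequality $2t^2+5t\ge0$, so you need no auxiliary logarithm bound.
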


\begin{proof}
Write $\phi(q)=\log\{q/\log(1+q)\}$ and $L=\log(1+q)$. Direct differentiation gives
\[
  \phi''(q)=-\frac1{q^2}+\frac{L+1}{(1+q)^2L^2},
\]
so $\phi''(q)\leq0$ is equivalent to $q^2(L+1)\leq(1+q)^2L^2$. Put $u=L$, so that $q=e^u-1$. After dividing by $e^{2u}$, it remains to prove
\[
  (1-e^{-u})^2(1+u)\leq u^2.
\]
Let $\Delta(u)=u^2-(1+u)(1-e^{-u})^2$ and put $v=e^{-u}$. Then $\Delta(0)=0$ and
\[
  \Delta'(u)=2u(1-v+v^2)-(1-v^2).
\]
Using $u=-\log v\geq2(1-v)/(1+v)$,
\[
  \Delta'(u)\geq\frac{4(1-v)(1-v+v^2)}{1+v}-(1-v^2)=\frac{3(1-v)^3}{1+v}\geq0.
\]
Hence $\Delta\geq0$, proving the claim.
\end{proof}

By \cref{lem:q-log-concavity} and the convexity of $h$, the function $\eta$ is concave. Moreover, $\eta(r)=\log(r/\ell)>0$, so its nonnegative superlevel set on $[r,\infty)$ is an interval containing $r$. Hence $(R-E)'$ changes sign at most once on $[q_-,q_+]$, from nonnegative to nonpositive. The function $R-E$ is therefore first nondecreasing and then possibly nonincreasing, so its minimum over $[q_-,q_+]$ is attained at an endpoint.

At the lower endpoint, $q_-=r+a_\star$ and the corresponding value of $b$ is zero. Since $0<a_\star<\rho^{-1}$, \cref{eq:right-endpoint-estimate} with $a=a_\star$ gives \[E(q_-)\leq R(q_-).\]

At the upper endpoint, the corresponding value of $b$ satisfies
\[
  a_\star^2+b^2=q_+^2-r^2-2ra_\star=\rho^{-2}.
\]
To control this endpoint, we prove a more general estimate whenever the step-size constraint is active and the radial component is nonnegative. Let $a,b\geq0$ satisfy
\[
  q^2=(r+a)^2+b^2, \qquad a^2+b^2=\rho^{-2},
\]
and write $a=\rho^{-1}\alpha$, where $0\leq\alpha\leq1$. Then
\[
  q^2=r^2+\rho^{-2}+2r\rho^{-1}\alpha, \qquad D(a)=1+\ell\alpha+\frac1{2\rho}+\frac{r\ell}{2}\alpha^2.
\]
We claim that
\begin{equation}\label{eq:step-boundary-estimate}
  \exp\curlyb{\rho\{h(q)-h(r)\}}\leq1+\ell\alpha+\frac1{2\rho}+\frac{r\ell}{2}\alpha^2.
\end{equation}

If $\alpha=1$, then $b=0$ and $q=r+\rho^{-1}$, so \cref{eq:step-boundary-estimate} follows from \cref{eq:right-endpoint-estimate} with $a=\rho^{-1}$. Suppose that $0\leq\alpha<1$, and set $\beta=\rho(q-r)$. The expression for $q^2$ above gives $r<q<r+\rho^{-1}$, so $0<\beta<1$. Comparing it with $q=(r+\rho^{-1}\beta)$ gives
\[
  \rho^{-1}(1-\beta^2)=2r(\beta-\alpha),
\]
and hence $\alpha<\beta$. Let
\[
  \bar r:=\frac{r+q}{2}=r+\frac{\beta}{2\rho}=r\frac{1-\alpha\beta}{1-\beta^2}
\]
and define the secant slope
\[
  A:=\frac{h(q)-h(r)}{q-r}=\frac1{q-r}\int_r^q\log(1+u)\,\dif u.
\]
Since $\rho(q-r)=\beta$, the left-hand side of \cref{eq:step-boundary-estimate} is $e^{\beta A}$. Jensen's inequality gives
\[
  e^A\leq\frac1{q-r}\int_r^q(1+u)\,\dif u=1+\bar r,
\]
and hence $e^{\beta A}\leq(1+\bar r)^\beta$. It remains to compare this expression with the quadratic on the right-hand side of \cref{eq:step-boundary-estimate}. The following lemma gives the required bound; its proof is deferred until the conclusion of the present argument.

\begin{lemma}\label{lem:chord-comparison}
Let $0\leq\alpha\leq\beta<1$ and $r\geq0$, write $\ell=\log(1+r)$, and set
\[
  \bar r=r\frac{1-\alpha\beta}{1-\beta^2}.
\]
Then
\[
  (1+\bar r)^\beta\leq1+\beta\bar r-(r-\ell)\alpha+\frac{r\ell}{2}\alpha^2.
\]
\end{lemma}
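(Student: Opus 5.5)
The plan is to treat the inequality as a one-variable statement in $\alpha\in[0,\beta]$, with $r$ and $\beta$ fixed. Write
\[
  f(\alpha)=1+\beta\bar r-(r-\ell)\alpha+\frac{r\ell}{2}\alpha^2-(1+\bar r)^\beta,
  \qquad \bar r=\bar r(\alpha)=r\frac{1-\alpha\beta}{1-\beta^2},
\]
so the claim is $f\geq0$ on $[0,\beta]$. The map $\alpha\mapsto\bar r$ is affine and decreasing, with $\bar r(\beta)=r$ and $\bar r(0)=r/(1-\beta^2)$. Since $x\mapsto(1+x)^\beta$ is concave for $\beta\in[0,1)$, the term $-(1+\bar r)^\beta$ is convex in $\alpha$. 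The remaining terms form a convex quadratic, so $f$ is convex on $[0,\beta]$. The case $r=0$ is trivial, since then $\ell=0$, $\bar r=0$ and both sides equal one.

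\textbf{Step 1: the endpoints.} At $\alpha=0$ the claim reads $(1+\bar r)^\beta\leq1+\beta\bar r$, which is Bernoulli's inequality for exponents in $[0,1]$. At $\alpha=\beta$ we have $\bar r=r$, and the right-hand side collapses to $1+\beta\ell+\frac{r\ell}{2}\beta^2$. The left-hand side is $e^{\beta\ell}$. Using the power-series bound $e^{\theta A}\leq1+\theta A+\theta^2(e^A-1-A)$ for $\theta\in[0,1]$, already used in the right-endpoint verification, gives
\[
  e^{\beta\ell}\leq1+\beta\ell+\beta^2(r-\ell).
\]
The elementary estimate $r-\ell\leq r\ell/2$, which follows from $\ell\geq2r/(2+r)$, then closes this endpoint.

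\textbf{Step 2: the interior.} Convexity with nonnegative endpoint values does not suffice on its own, so the key is the slope at $\alpha=\beta$. Using $\mathrm d\bar r/\mathrm d\alpha=-r\beta/(1-\beta^2)$,
\[
  f'(\beta)=\frac{r\beta^2}{1-\beta^2}\bigl(1-(1+r)^{\beta-1}\bigr)\cdot(-1)-(r-\ell)+r\ell\beta .
\]
If $f'(\beta)\leq0$, convexity makes $f$ nonincreasing on $[0,\beta]$, and Step 1 finishes the proof.

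Otherwise I will use the supporting-line bound $f(\alpha)\geq f(\beta)-f'(\beta)(\beta-\alpha)$. It is cleaner to work directly in $t=\beta-\alpha\in[0,\beta]$. Here $\bar r=r+r\beta t/(1-\beta^2)$, and concavity gives
\[
  (1+\bar r)^\beta\leq(1+r)^\beta+\beta(1+r)^{\beta-1}\frac{r\beta t}{1-\beta^2}.
\]
Substituting this, together with the bound on $(1+r)^\beta$ from Step 1, turns the claim into a quadratic inequality in $t$ with coefficients depending on $r$, $\ell$ and $\beta$. I would then try to verify it through its discriminant or by checking the quadratic at $t=0$ and $t=\beta$, using the two-sided bounds $2r/(2+r)\leq\ell<r$ to eliminate $\ell$.

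The main obstacle is this final step, namely controlling the interior minimum when $f'(\beta)>0$. The difficulty is the factor $1/(1-\beta^2)$, which blows up as $\beta\uparrow1$. The saving term should be $\beta\bar r$ on the right-hand side, which grows at the same rate, so the two $1/(1-\beta^2)$ contributions must be kept together rather than bounded separately. If the linearisation proves too lossy near $\beta\to1$, the fallback is to bound $(1+\bar r)^\beta\leq1+\beta\bar r-\frac{\beta(1-\beta)}{2}\,\frac{\bar r^2}{(1+\bar r)}$. This is a sharper Bernoulli inequality, and its extra negative term is quadratic in $\bar r-r$, which should absorb the $(r-\ell)\alpha$ deficit.
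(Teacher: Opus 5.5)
Your reduction is sound as far as it goes. The function $f$ is convex in $\alpha$, both endpoint checks are correct, and the case $f'(\beta)\le0$ closes. The gap is the case $f'(\beta)>0$, which you leave unproved. This case genuinely occurs: for fixed $\beta$ and large $r$ one has $f'(\beta)\approx r\ell\beta-r/(1-\beta^2)>0$, and for instance $r=10$, $\beta=0.9$ gives $f'(\beta)\approx4.9$. It is also where the content of the lemma lies.

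Carrying out your substitution with $t=\beta-\alpha$ gives exactly
\[
  f(\alpha)\ge g(t):=\beta^2\Bigl(\frac{r\ell}{2}-(r-\ell)\Bigr)-f'(\beta)\,t+\frac{r\ell}{2}\,t^2,
\]
which is a \emph{convex} quadratic in $t$. Checking it at $t=0$ and $t=\beta$ therefore proves nothing, for the very reason you gave for $f$. In fact both endpoint values are always nonnegative: $g(\beta)=\beta(1-\beta)(r-\ell)+\beta E$ with $E=\frac{r\beta^2}{1-\beta^2}\bigl(1-(1+r)^{\beta-1}\bigr)\ge0$.

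When $f'(\beta)>0$, the vertex $t=f'(\beta)/(r\ell)$ always lies in $(0,\beta]$. So the discriminant route needs exactly
\[
  f'(\beta)^2\le 2r\ell\beta^2\Bigl(\frac{r\ell}{2}-(r-\ell)\Bigr)\qquad\text{whenever } f'(\beta)>0.
\]
This is a two-parameter transcendental inequality in $(r,\beta)$; it restates the whole difficulty rather than resolving it. It may well be true, but nothing in your proposal proves it. Your sharper Bernoulli fallback is a valid inequality, but it is likewise not carried through.

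The idea you are missing is to choose the variable in which convexity alone suffices. The paper fixes $\alpha$ and $\beta$, writes $c=(1-\alpha\beta)/(1-\beta^2)$, and regards the difference
\[
  \Phi(r)=1+\beta cr-(1+cr)^\beta-(r-\ell)\alpha+\frac{r\ell}{2}\alpha^2
\]
as a function of $r$. Then $\Phi(0)=\Phi'(0)=0$, so $\Phi''\ge0$ on $[0,\infty)$ is enough. Put $u=1+r\ge1$ and use $\frac{\alpha^2}{2}\bigl(\frac1u+\frac1{u^2}\bigr)\ge\frac{\alpha^2}{u^2}$. The condition then reduces to $H(u)=\beta(1-\beta)c^2u^2/\{1+c(u-1)\}^{2-\beta}\ge\alpha(1-\alpha)$. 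This is checked at $u=1$ and at the single interior critical point $u_\star=2(\beta-\alpha)/(1-\alpha\beta)$.

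In the variable $\alpha$, by contrast, you only have values at the two endpoints, not a vanishing value and slope at a common point. An additional quantitative estimate for the interior minimum is therefore unavoidable, and that estimate is what your proposal lacks.
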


The relation between $\alpha$ and $\beta$ also gives
\[
  \beta\bar r-r\alpha=r(\beta-\alpha)+\frac{\beta^2}{2\rho}=\frac1{2\rho}.
\]
Applying \cref{lem:chord-comparison},
\[
  e^{\beta A}\leq(1+\bar r)^\beta\leq1+\beta\bar r-(r-\ell)\alpha+\frac{r\ell}{2}\alpha^2=1+\ell\alpha+\frac1{2\rho}+\frac{r\ell}{2}\alpha^2.
\]
This proves \cref{eq:step-boundary-estimate}. Taking $a=a_\star$ and $\alpha=\rho a_\star$ therein gives \[E(q_+)\leq R(q_+).\] The endpoint reduction now yields
\[
  a_\star\in\cI\implies\exp\curlyb{\rho\{h(q)-h(r)\}}\leq D(a_\star),
\]
as desired.

\begin{proof}[Proof of \cref{lem:chord-comparison}]
Set $c=(1-\alpha\beta)/(1-\beta^2)$, so that $\bar r=cr$, and define
\[
  \Phi(r)=1+\beta cr-(1+cr)^\beta-(r-\ell)\alpha+\frac{r\ell}{2}\alpha^2.
\]
It suffices to show that $\Phi(r)\geq0$. We have $\Phi(0)=\Phi'(0)=0$, while
\[
  \Phi''(r)=\beta(1-\beta)c^2(1+cr)^{\beta-2}-\frac{\alpha}{(1+r)^2}+\frac{\alpha^2}{2}\left(\frac1{1+r}+\frac1{(1+r)^2}\right).
\]
Put $u=1+r$ and define
\[
  H(u):=\beta(1-\beta)c^2\frac{u^2}{\{1+c(u-1)\}^{2-\beta}}.
\]
Since
\[
  -\frac{\alpha}{u^2}+\frac{\alpha^2}{2}\left(\frac1u+\frac1{u^2}\right)\geq-\frac{\alpha(1-\alpha)}{u^2},
\]
it is enough to prove that $H(u)\geq\alpha(1-\alpha)$ for $u\geq1$.

The case $\alpha=0$ is immediate. Suppose that $0<\alpha\leq\beta<1$. The logarithmic derivative of $H$ has the sign of $\beta cu-2(c-1)$. It changes sign at most once, from negative to positive, so the minimum of $H$ over $u\geq1$ is attained either at $u=1$ or at the unique interior stationary point.

At $u=1$,
\[
  H(1)-\alpha(1-\alpha)=\frac{(\beta-\alpha)\{1-\alpha(1+\beta-\beta^2)\}}{(1-\beta)(1+\beta)^2}\geq0.
\]
Indeed, $\alpha\leq\beta$ and
\[
  1-\alpha(1+\beta-\beta^2)\geq1-\beta(1+\beta-\beta^2)=(1-\beta)^2(1+\beta)\geq0.
\]
This proves the required bound when the minimum of $H$ is attained at $u=1$.

It remains to consider the case where the stationary point of $H$ lies in $(1,\infty)$. This point is
\[
  u_\star=\frac{2(c-1)}{\beta c}=\frac{2(\beta-\alpha)}{1-\alpha\beta}.
\]
The condition $u_\star>1$ is equivalent to $2\beta-1>\alpha(2-\beta)$. Since $\alpha>0$, this can occur only when $\beta>1/2$; in that case, it is equivalent to
\[
  0<\alpha<\frac{2\beta-1}{2-\beta}=:\alpha_c.
\]
We now verify the required bound throughout this remaining range of parameters.

At the stationary point,
\[
  H(u_\star)=M_\beta(\alpha):=\frac{4\beta(1-\beta)^{1-\beta}(\beta-\alpha)^\beta}{(1+\beta)^\beta(2-\beta)^{2-\beta}}.
\]
Thus it remains to prove $M_\beta(\alpha)\geq\alpha(1-\alpha)$ for $0<\alpha<\alpha_c$. Define
\[
  Q(\alpha)=\frac{M_\beta(\alpha)}{\alpha(1-\alpha)}.
\]
Then
\[
  \frac{\dif}{\dif\alpha}\log Q(\alpha)=\frac{N_\beta(\alpha)}{\alpha(\beta-\alpha)(1-\alpha)}, \qquad N_\beta(\alpha):=(\beta-2)\alpha^2+(\beta+1)\alpha-\beta.
\]
The denominator is positive on $(0,\alpha_c]$. Moreover, $N_\beta'$ is decreasing and
\[
  N_\beta'(\alpha_c)=3(1-\beta)>0,
\]
so $N_\beta$ is increasing on $[0,\alpha_c]$. Since $N_\beta(\alpha_c)=\beta-1<0$, it follows that $N_\beta(\alpha)<0$ throughout this interval. Hence $Q$ is decreasing on $(0,\alpha_c]$.

Finally,
\[
  M_\beta(\alpha_c)=\frac{4\beta(1-\beta)}{(2-\beta)^2}, \qquad \alpha_c(1-\alpha_c)=\frac{3(2\beta-1)(1-\beta)}{(2-\beta)^2},
\]
and therefore
\[
  Q(\alpha_c)=\frac{4\beta}{3(2\beta-1)}\geq1.
\]
Since $Q$ is decreasing, $Q(\alpha)\geq Q(\alpha_c)\geq1$ for every $0<\alpha<\alpha_c$. Thus $H(u_\star)\geq\alpha(1-\alpha)$.

We have now controlled the minimum of $H$ whether it is attained at $u=1$ or at the interior stationary point. Hence $\Phi''(r)\geq0$ for every $r\geq0$. Since $\Phi(0)=\Phi'(0)=0$, it follows that $\Phi(r)\geq0$.
\end{proof}

\subsection{Left endpoint verification}

The right endpoint and the case $a_\star\in\cI$ have already been treated. It remains to consider $a_\star<a_-$, in which case the minimum of $D$ over $\cI$ is attained at $a_-$. Since $a_\star>0$, we have $a_->0$.

For $a\in\cI$, the identity $b^2=q^2-(r+a)^2\geq0$ gives $a\geq-q-r$, while
\[
  a^2+b^2=q^2-r^2-2ra\leq\rho^{-2}
\]
gives
\[
  a\geq\frac{q^2-r^2-\rho^{-2}}{2r}.
\]
These are the two lower constraints defining $\cI$. Since $-q-r\leq0<a_-$, the first cannot determine the left endpoint. Hence
\[
  a_-=\frac{q^2-r^2-\rho^{-2}}{2r}.
\]
For the corresponding value of $b$, this is equivalent to $a_-^2+b^2=\rho^{-2}$.

Set $\alpha=\rho a_-$. Then $0<\alpha\leq1$ and $q^2=r^2+\rho^{-2}+2r\rho^{-1}\alpha$. The estimate \cref{eq:step-boundary-estimate} therefore gives
\[
  \exp\curlyb{\rho\{h(q)-h(r)\}} \leq 1+\ell\alpha+\frac1{2\rho}+\frac{r\ell}{2}\alpha^2 = D(a_-).
\]
This proves the left-endpoint case. Together with the right-endpoint and unconstrained-minimiser cases, it completes the scalar verification required in the proof of \cref{lem:bennett-tangent}.

\section{Convexity used in the proof of \cref{lem:logdet-domination}}
\label{app:logdet-convexity}

\begin{lemma}\label{lem:theta-convex}
Fix $\rho > 0$. Let $S\in\ptrclass$ satisfy $\norm{S}_{\op}<1$, and let $z\in\cH$. Then the function
\[
  t\mapsto
  -\frac12\log\det(I-tS)
  +
  \rho h\roundb[\big]{\norm{(I-tS)^{-1/2}z}}
\]
is convex on $[0,1]$.
\end{lemma}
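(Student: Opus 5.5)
We will prove convexity of $\Theta$ by splitting it into two pieces and handling the coupling between them explicitly. The log-determinant part, $t\mapsto-\frac12\log\det(I-tS)=\frac12\sum_j-\log(1-t\lambda_j)$ with $\lambda_j=\lambda_j(S)\in[0,1)$, is a sum of convex functions. Its second derivative is $\frac12\sum_j\lambda_j^2/(1-t\lambda_j)^2=\frac12\tr\bigl(S^2(I-tS)^{-2}\bigr)$, and the sum converges because $S$ is trace class. Any finite-dimensional truncation converges in trace norm, so it suffices to work with finitely many eigendirections. For the second piece, write $z=\sum_j z_je_j$ in the eigenbasis of $S$ and set
\[
  \phi(t)=\norm{(I-tS)^{-1/2}z}^2=\sum_j\frac{z_j^2}{1-t\lambda_j}.
\]
This function is convex, nondecreasing and positive. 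The second piece is then $g(t)=\rho h(\sqrt{\phi(t)})$.

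The obstacle is that $u\mapsto h(\sqrt u)$ is \emph{concave}: $h(\sqrt u)\approx u/2-u^{3/2}/6$ near zero. Composing it with the convex $\phi$ therefore does not give convexity directly. Differentiating, with $k(u)=h(\sqrt u)$,
\[
  g''=\rho\bigl(k'(\phi)\phi''+k''(\phi)\phi'^2\bigr),\qquad k'(u)=\frac{\log(1+\sqrt u)}{2\sqrt u},\qquad k''(u)=\frac{1}{4u}\Bigl(\frac{1}{1+\sqrt u}-\frac{\log(1+\sqrt u)}{\sqrt u}\Bigr)\le0.
\]
We need the negative term $\rho k''(\phi)\phi'^2$ to be absorbed by $\rho k'(\phi)\phi''$, using the log-determinant curvature $\frac12\tr(S^2(I-tS)^{-2})$ as well if necessary. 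Setting $w_j=z_j^2/(1-t\lambda_j)$ and $\mu_j=\lambda_j/(1-t\lambda_j)$, we have
\[
  \phi=\sum_j w_j,\qquad \phi'=\sum_j w_j\mu_j,\qquad \phi''=2\sum_j w_j\mu_j^2.
\]
Cauchy--Schwarz gives $\phi'^2\le\phi\sum_j w_j\mu_j^2=\phi\,\phi''/2$. It therefore suffices to show the scalar inequality $k'(u)+\frac{u}{2}k''(u)\ge0$ for $u>0$.

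With $s=\sqrt u$, this reads
\[
  \frac{\log(1+s)}{2s}+\frac18\Bigl(\frac{1}{1+s}-\frac{\log(1+s)}{s}\Bigr)\ge0,
\]
that is, $\frac{3\log(1+s)}{8s}+\frac{1}{8(1+s)}\ge0$, which holds trivially. So $g''\ge0$ holds without even needing the determinant term, and $\Theta$, as a sum of two convex functions, is convex. The only care needed is in the infinite-dimensional justification. First prove the result for finite-rank truncations $P_kSP_k$, where all the series are finite and the differentiation is legitimate. Then pass to the limit: convexity is preserved under pointwise limits, and the truncated $\Theta_k(t)$ converges to $\Theta(t)$ for each $t\in[0,1]$ by trace-norm continuity of the Fredholm determinant and operator-norm convergence of $(I-tP_kSP_k)^{-1/2}$, which holds uniformly because $\norm{S}_{\op}<1$.

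The main step to check carefully is the sign bookkeeping in the Cauchy--Schwarz step. The inequality $\phi'^2\le\phi\phi''/2$ has exactly the constant required, and the scalar inequality leaves margin. Once that is confirmed, the rest is routine.
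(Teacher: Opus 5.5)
Your proof is correct and is essentially the paper's argument: the same split into the convex log-determinant series and the radial term, and the same Cauchy--Schwarz bound $(\phi')^2\le\frac12\phi\phi''$ in the eigenbasis of $S$. The one difference is where the square root goes. The paper pairs it with $\phi$, showing $t\mapsto\norm{(I-tS)^{-1/2}z}$ is convex (this needs only $(\phi')^2\le 2\phi\phi''$, so the constant $\frac12$ is not ``exactly what is required'' but has a factor-four margin), and then composes with the convex nondecreasing $h$. You instead pair the root with $h$ and verify $k'+\frac u2k''\ge0$ directly; also, your finite-rank truncation step is unnecessary, since the series can be differentiated termwise uniformly on $[0,1]$ when $\norm{S}_{\op}<1$.
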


\begin{proof}
Since $S\in\ptrclass$, it is compact and self-adjoint. Let $(\lambda_j)_{j\geq1}$ be its nonzero eigenvalues, counted with multiplicity. For every $t\in[0,1]$ and $j\geq1$,
\[
  0\leq t\lambda_j\leq\norm{S}_{\op}<1,
\]
so
\[
  -\frac12\log\det(I-tS)=\frac12\sum_{j\geq1}-\log(1-t\lambda_j)<\infty.
\]
Each finite partial sum is convex in $t$, and the series is their pointwise supremum. Hence $t\mapsto-\frac12\log\det(I-tS)$ is convex.

It remains to consider the second term. Set
\[
  A_t=(I-tS)^{-1}, \qquad R(t)=\langle z,A_tz\rangle=\norm{(I-tS)^{-1/2}z}^2.
\]
Since $\norm{S}_{\op}<1$, the map $t\mapsto A_t$ is twice differentiable in operator norm on $[0,1]$. Moreover, $A_t$ commutes with $S$, and differentiating $(I-tS)A_t=I$ gives
\[
  A_t'=SA_t^2, \qquad A_t''=2S^2A_t^3.
\]
It follows that
\[
  R'(t)=\langle z,SA_t^2z\rangle, \qquad R''(t)=2\langle z,S^2A_t^3z\rangle.
\]
Since $S$ and $A_t$ are commuting positive operators, $R'(t),R''(t)\geq0$. Furthermore,
\[
  R'(t)=\langle A_t^{1/2}z,SA_t^{3/2}z\rangle,
\]
so the Cauchy--Schwarz inequality gives
\[
  (R'(t))^2
  \leq \norm{A_t^{1/2}z}^2\norm{SA_t^{3/2}z}^2
  =\frac12R(t)R''(t).
\]

If $z=0$, the second term is constant. Otherwise, define
\[
  r(t)=\sqrt{R(t)}=\norm{(I-tS)^{-1/2}z}.
\]
Then
\[
  r'(t)=\frac{R'(t)}{2R(t)^{1/2}}\geq0
\]
and
\[
  r''(t)=\frac{2R(t)R''(t)-(R'(t))^2}{4R(t)^{3/2}}\geq0.
\]
Thus $r$ is nondecreasing and convex. Since
\[
  h'(u)=\log(1+u)\geq0, \qquad h''(u)=\frac1{1+u}>0,
\]
we have
\[
  \frac{\dif^2}{\dif t^2}h(r(t))
  =h''(r(t))(r'(t))^2+h'(r(t))r''(t)
  \geq0.
\]
Hence $t\mapsto\rho h(\norm{(I-tS)^{-1/2}z})$ is convex. Adding the two convex terms proves the claim.
\end{proof}

\section{Itô--Meyer calculation for \cref{prop:continuous-bounded-jumps}}
\label{app:continuous-bounded-jumps}

We verify \eqref{eq:continuous-ito-decomposition}. Suppose that $\cH=\Rd$ and fix $\rho>0$. Identify the space of symmetric $d\times d$ matrices with $\R^{d(d+1)/2}$. Since $M$ is an $\Rd$-valued local martingale and $V$ is a continuous finite-variation process, the joint process $(M,V)$ is a finite-dimensional càdlàg semimartingale.

The multidimensional Itô--Meyer formula of \citet[Thm.~27.1]{metivier1982semimartingales} requires the function to which it is applied to be $C^2$. The maps $G\mapsto\det(I+\rho^{-1}G)$ and $G\mapsto(G+\rho I)^{-1/2}$ are smooth on the open set of symmetric matrices $G$ for which $G+\rho I$ is positive definite. Moreover, the expansions $h'(r)=r+O(r^2)$ and $h''(r)=1+O(r)$ as $r\downarrow0$ show that the radial dependence on $m$ is $C^2$ at $m=0$. Hence $(m,G)\mapsto U_\rho^h(m,G)$ is $C^2$ on
\[
  \cO_\rho=\curlyb{(m,G):G+\rho I\text{ is positive definite}},
\]
which contains the range of $(M,V)$ because $V_t\succeq0$.

We next localise the process so that the Itô--Meyer formula may be applied to a globally $C^2$ function. For $R\in\Np$, define
\[
  \tau_R=\inf\curlyb[\big]{t\geq0:\norm{M_t}\geq R\ \text{or}\ \tr V_t\geq R}
\]
and
\[
  \cK_R=\curlyb[\big]{(m,G):\norm{m}\leq R+1,\ G\succeq0,\ \tr G\leq R}.
\]
For $s\leq\tau_R$, we have $\norm{M_{s-}}\leq R$, $\norm{H_s(z)}\leq1$ and $\tr V_s\leq R$. Hence $(M_{s-},V_s)$ and $(M_{s-}+H_s(z),V_s)$ lie in $\cK_R$, as does $(M_{t\wedge\tau_R},V_{t\wedge\tau_R})$. The set $\cK_R$ is compact and contained in $\cO_\rho$, since $G+\rho I\succeq\rho I$ whenever $G\succeq0$.

Choose a smooth cutoff supported in $\cO_\rho$ and equal to one on a neighbourhood of $\cK_R$. Multiplying $U_\rho^h$ by this cutoff on $\cO_\rho$ and extending the product by zero outside $\cO_\rho$ gives a globally $C^2$ extension of $U_\rho^h$ from a neighbourhood of $\cK_R$. This extension agrees with $U_\rho^h$, together with its first two derivatives, on $\cK_R$.

Apply the Itô--Meyer formula to this globally $C^2$ extension of $U_\rho^h$ and the stopped semimartingale $(M_{t\wedge\tau_R},V_{t\wedge\tau_R})_{t\geq0}$. Since the extension agrees with $U_\rho^h$ at every point appearing in the formula, we write the resulting identity in terms of $U_\rho^h$. Since $M$ is purely discontinuous and $V$ is of finite variation, the joint process has no continuous local-martingale part, so the continuous quadratic-covariation term vanishes. Since $V$ is continuous, $\Delta V_s=0$. The formula therefore gives
\begin{align*}
  U_\rho^h(M_{t\wedge\tau_R},V_{t\wedge\tau_R})
  &=
  1+\int_{(0,t\wedge\tau_R]}\mathrm{D}_mU_\rho^h(M_{s-},V_s)[\dif M_s]
  +\int_{(0,t\wedge\tau_R]}\mathrm{D}_GU_\rho^h(M_{s-},V_s)[\dif V_s]\\
  &+\sum_{0<s\leq t\wedge\tau_R}\curlyb[\big]{U_\rho^h(M_s,V_s)-U_\rho^h(M_{s-},V_s)-\mathrm{D}_mU_\rho^h(M_{s-},V_s)[\Delta M_s]}.
\end{align*}

At an atom $(s,z)$ of $\mu$, the absence of time atoms of $\nu$ and the assumption that $\mu$ has at most one point at each time give $\Delta M_s=H_s(z)$. Since $V$ is continuous, $V_s=V_{s-}$. Therefore the sum over jumps in the Itô--Meyer formula is exactly
\[
  \int_{(0,t\wedge\tau_R]\times\cZ}\curlyb[\big]{U_\rho^h(M_{s-}+H_s(z),V_s)-U_\rho^h(M_{s-},V_s)-\mathrm{D}_mU_\rho^h(M_{s-},V_s)[H_s(z)]}\,\mu(\dif s,\dif z).
\]

Decompose this integral as the sum of its integrals against $\mu-\nu$ and $\nu$. Combining the former with the integral against $M$ cancels the two terms involving $\mathrm{D}_mU_\rho^h$, while combining the latter with the integral against $V$ gives the integrand $\Gamma_\rho$. Hence
\begin{align*}
  &U_\rho^h(M_{t\wedge\tau_R},V_{t\wedge\tau_R}) \\
  &\qquad=1+\int_{(0,t\wedge\tau_R]\times\cZ}\curlyb[\big]{U_\rho^h(M_{s-}+H_s(z),V_s)-U_\rho^h(M_{s-},V_s)}\,(\mu-\nu)(\dif s,\dif z)\\
  &\qquad\qquad+\int_{(0,t\wedge\tau_R]\times\cZ}\Gamma_\rho(M_{s-},V_s;H_s(z))\,\nu(\dif s,\dif z)
  =:1+N_t^{\tau_R}+A_t^{\tau_R}.
\end{align*}

We now verify that $N^{\tau_R}$ is a square-integrable martingale and that $A^{\tau_R}$ is well-defined and has finite variation. On the stopped range, boundedness of the first and second derivatives of the cutoff extension, together with the mean-value theorem and the second-order Taylor remainder, gives
\[
  \abs{U_\rho^h(m+x,G)-U_\rho^h(m,G)}\leq C_R\norm{x},\qquad
  \abs{\Gamma_\rho(m,G;x)}\leq C_R\norm{x}^2
\]
whenever $\norm{m}\leq R$, $G\succeq0$, $\tr G\leq R$ and $\norm{x}\leq1$. Consequently,
\[
  \int_{(0,t\wedge\tau_R]\times\cZ}\abs{U_\rho^h(M_{s-}+H_s(z),V_s)-U_\rho^h(M_{s-},V_s)}^2\,\nu(\dif s,\dif z)
  \leq C_R^2\tr V_{t\wedge\tau_R}\leq C_R^2R,
\]
so $N^{\tau_R}$ is a square-integrable martingale. Similarly,
\[
  \int_{(0,t\wedge\tau_R]\times\cZ}\abs{\Gamma_\rho(M_{s-},V_s;H_s(z))}\,\nu(\dif s,\dif z)
  \leq C_R\tr V_{t\wedge\tau_R}\leq C_RR,
\]
so $A^{\tau_R}$ is well-defined and has finite variation.

The first estimate shows that the compensated integral in the preceding decomposition defines a local martingale $N$, with localising sequence $(\tau_R)$, while the second shows that the process $A$ defined in the main proof is well-defined and locally of finite variation. Hence
\[
  U_\rho^h(M_{t\wedge\tau_R},V_{t\wedge\tau_R})
  =
  1+N_{t\wedge\tau_R}+A_{t\wedge\tau_R}.
\]
Since $M$ has càdlàg paths and $\tr V_T<\infty$ for every $T<\infty$, we have $\tau_R\uparrow\infty$ almost surely. Thus, on every finite time interval, the stopped identity agrees with the unstopped identity for all sufficiently large $R$, proving \eqref{eq:continuous-ito-decomposition}.

\end{document}